\newtheorem{thm}{Theorem}[section]
\newtheorem*{thm*}{Theorem}
\newtheorem{cor}[thm]{Corollary}
\newtheorem*{cor*}{Corollary}
\newtheorem{prop}[thm]{Proposition} 
\newtheorem*{prop*}{Proposition} 
\newtheorem*{properties*}{Properties} 
\newtheorem{lem}[thm]{Lemma} 
\newtheorem*{lem*}{Lemma}
\newtheorem*{claim*}{Claim} 
\newtheorem*{fact*}{Fact}
\newtheorem*{fait*}{Fait}
\newtheorem*{qst*}{Question} 
\newtheorem{qst}[thm]{Question} 
\newtheorem*{pb*}{Problem} 
\newtheorem{pb}[thm]{Problem} 
\newtheorem*{conj*}{Conjecture}
\newtheorem*{exo*}{Exercise}
\theoremstyle{remark}
\newtheorem{dfn}[thm]{Definition} 
\newtheorem*{dfn*}{Definition}
\newtheorem*{algo*}{Algorithm} 
\newtheorem*{rem*}{Remark}
\newtheorem{rem}[thm]{Remark}
\newtheorem*{example*}{Example}
\newtheorem{example}[thm]{Example}
\newcounter{numEnonceTmpInterne}
\newenvironment{enonce*}[1]{\theoremstyle{plain}\stepcounter{numEnonceTmpInterne}%
\def\a{enoncetmp\alph{numEnonceTmpInterne}}%
\newtheorem*{\a}{#1}\begin{\a}}{\end{\a}}
\edef\@tempa#1#2{\def#1{\mathaccent\string"\noexpand\accentclass@#2 }}
\@tempa\rond{017}
\newcommand{\es}{\emptyset}
\renewcommand{\phi}{\varphi} 
\newcommand{\m} {^{-1}} 
\newcommand{\eps} {\varepsilon}
\newcommand {\ra} {\rightarrow}
\newcommand{\actson}{\curvearrowright}
\newcommand{\ul}[1]{\underline{#1}}
\newcommand{\dunion}{\sqcup}
\newcommand{\ie} {i.~e.\ }
\newcommand {\cala} {{\mathcal {A}}}
\newcommand {\calf} {{\mathcal {F}}}   
\newcommand {\calh} {{\mathcal {H}}}
\newcommand {\bbC} {{\mathbb {C}}}
\newcommand {\bbF} {{\mathbb {F}}}   
\newcommand {\bbG} {{\mathbb {G}}}
\newcommand {\bbN} {{\mathbb {N}}}
\newcommand {\bbR} {{\mathbb {R}}}
\newcommand {\bbZ} {{\mathbb {Z}}}   
\newcommand{\N}{{\mathbb{N}}}
\newcommand{\Z}{{\mathbb{Z}}}
\newcommand{\F}{{\mathbb{F}}}
\newcommand{\grp}[1]{\langle #1 \rangle}
\newcommand{\Out} {{\mathrm{Out}}}
\newcommand{\Hom} {{\mathrm{Hom}}}
\newcommand{\Aut} {{\mathrm{Aut}}}
\newcommand{\id} {\mathrm{id}}
\newcommand{\ad} {{\mathrm{ad}}}
\newcommand{\Mx}[1]{\begin{matrix}#1\end{matrix}}
\newcommand{\bMx}[1]{\begin{bmatrix}#1\end{bmatrix}}
\newcommand{\inv}[1]{\check #1}
\newcommand{\Lip}{{\mathrm{Lip}}}
\renewcommand{\epsilon}{\varepsilon}
\newcommand{\HOM}[1][L_V]{\mathrm{Hom}_\Gamma(#1,\Gamma)}
\newcommand{\fois}{\circledast}
\newcommand{\conn}{connected}
\title{Algebraic groups over free and hyperbolic groups}
\author{Vincent Guirardel and Chloé Perin} 
\begin{document}
\maketitle 

\begin{abstract} We define an algebraic group over a group $G$ to be a variety - that is, a subset of $G^d$ defined by equations over $G$ - endowed with a group law whose coordinates can be expressed as word maps. In the case where $G$ is a torsion-free hyperbolic group and the underlying variety is irreducible we give a complete description of all algebraic groups.
\end{abstract}

\section{Introduction}

In classical algebraic geometry, an algebraic group over a field $k$ is an algebraic variety endowed with a group law which can be expressed by polynomials in the coordinates. We ask what happens when we replace the field $k$ by a group $\Gamma$, and polynomials by word maps with constants in $\Gamma$.

More precisely, given a group $\Gamma$, an equation over $\Gamma$ in the variables
$x_1, \ldots, x_d$ is an expression of the form $w(x_1, \ldots, x_d)=1$, 
where $w$
is a word in the variables $x_1^\pm,\dots,x_d^\pm$ with constants in $\Gamma$,
\ie an element of the free product $\Gamma * \grp{x_1, \ldots, x_d}$. 
Given a set $\Sigma$ of such words, the variety $V$ defined by $\Sigma$ is the subset of $\Gamma^d$ consisting of the $d$-tuples $(v_1,\dots, v_d)\in \Gamma^d$ 
satisfying all the equations 
$w(v_1,\dots,v_d)=1$ for $w\in\Sigma$. 
An algebraic map $F:V\ra V'$ between two algebraic varieties $V\subset \Gamma^d$ and $V'\subset \Gamma^{d'}$ is a map such that there exist words $w_1,\dots, w_{d'}\in \Gamma*\grp{x_1,\dots,x_d}$
such that for all $v=(v_1,\dots,v_d)\in V$, 
$F(v)=(w_1(v),\dots, w_{d'}(v))$.
An isomorphism of algebraic varieties $F:V\ra V'$ is a bijection such that $F$ and $F\m$ are algebraic maps.
When $\Gamma$ is a non-elementary torsion-free hyperbolic group,
the set of algebraic varieties is the collection of closed sets for 
a topology, called the Zariski topology (the hypothesis on $\Gamma$ ensures that 
this is stable under finite unions, see \cite[\S3.1]{BMR_algebraicI}).

We define an algebraic group over $\Gamma$ as a group $V$ which is a variety 
and whose multiplication law  $\mu: V \times V \to V$ is an algebraic map.
It would be natural to ask additionally that the map $V\ra V$ sending and element to its inverse 
is also an algebraic map, but we actually don't require this as our results hold without
this assumption (and in fact imply that this extra condition automatically holds, see Remark \ref{rem_inverse}).

\begin{example} Let $\Gamma$ be any group. Then 
$\Gamma$ itself is an algebraic group
for its standard multiplication law $\mu(u,v)=uv$, 
which is simply the algebraic map $\Gamma^2\ra \Gamma$ associated to the word $xy\in \Gamma*\grp{x,y}$.
We note that the inverse $v\mapsto v\m$ is also an algebraic map.
\end{example}

\begin{example} 
If $a\in \Gamma$ is any element, then its centralizer $V=Z_\Gamma(a)$
is an algebraic group with underlying variety defined by the equation $xa x\m a\m=1$,
and endowed with the standard multiplication law.
Similarly, the centralizer of any subset of $\Gamma$ is an algebraic group.
Now if $\Gamma$ is a torsion-free hyperbolic group, 
any centralizer is either the trivial group, the whole group $\Gamma$, or a maximal cyclic subgroup $\grp{c}=Z(c)$.
\end{example}

\begin{example}\label{ex_general}
The direct product of finitely many algebraic groups over $\Gamma$ is again an algebraic group. 
Thus, given $a_1, \ldots, a_l\in \Gamma\setminus\{1\}$ and $n_1,\dots,n_l,r\in \bbN$, the product
$\Gamma^r\times Z(a_1)^{n_1}\times \dots \times Z(a_l)^{n_l}$
is also an algebraic group.
\end{example}

\begin{example}
If $a$ is conjugate to $a'=bab\m$ then $V=Z(a)$ is isomorphic to $V'=Z(a')$ as an algebraic group, the isomorphism $v\mapsto bvb\m$ and its inverse being algebraic maps.
For this reason, if $\Gamma$ is a torsion-free hyperbolic group,
any group as in Example \ref{ex_general} is isomorphic, as an algebraic group, to 
a group of the form 
$\Gamma^r\times \grp{c_1}^{n_1}\times \dots \times \grp{c_l}^{n_l}$
where $\grp{c_1},\dots,\grp{c_l}$ are non-conjugate maximal cyclic groups.
\end{example}

\begin{example}
If $V$ is an algebraic group and $F:V\ra V'$ is an isomorphism of algebraic varieties,
one can transport the multiplication law by $F$. 
For example, conjugating the standard multiplication law on $V=\Gamma^2$ by the algebraic map 
$F:\bMx{u_1\\u_2}\mapsto\bMx{u_1u_2\\ u_2u_1u_2}$,
one gets a seemingly complicated multiplication law $\fois$ on $V'=V=\Gamma^2$ given by
$$\bMx{u_1\\u_2}\fois \bMx{v_1\\v_2} = 
\bMx{u_1^2 u_2\m v_1^2 v_2\m u_2u_1\m v_2v_1\m\\
u_2u_1\m v_2v_1\m u_1^2 u_2\m v_1^2 v_2\m u_2u_1\m v_2v_1\m
}$$
but this is just an algebraic group isomorphic to $\Gamma^2$ with its standard multiplication law.
Here we say that two algebraic groups $(V,\mu),(V',\mu')$ are isomorphic (as algebraic groups over $\Gamma$)
if there is an isomorphism of algebraic varieties $F:V\ra V'$ which is a group isomorphism.
\end{example}

In this paper we work over a fixed torsion-free hyperbolic group $\Gamma$.
A variety $V$ is called \emph{irreducible} if it is not the union of two proper subvarieties.
For an algebraic group, it is irreducible if and only if it is connected for the Zariski topology (see Section \ref{sec_irreducible}). 
We thus say that an algebraic group $V$ is \emph{\conn} if the underlying variety is connected, or equivalently irreducible.
Focusing on \conn\ algebraic groups is not a big restriction since any algebraic group has a finite index subgroup which is \conn\ (this is a consequence of equational Noetherianity due to Sela \cite{Sela_diophantine7}, see Section \ref{sec_irreducible}).
All the examples of algebraic groups given above are \conn.

Our main theorem gives a classification of all \conn\ algebraic groups over $\Gamma$,
and says that all of them come from the examples above.

\begin{thm} \label{thm_main_result} Let $\Gamma$ be a torsion-free hyperbolic group.
Let $V$ be a \conn\ algebraic group over $\Gamma$.

Then there exist non-conjugate maximal cyclic subgroups $\grp{c_1}, \ldots, \grp{c_l}$ of  $\Gamma$, and integers $r \in \N$ and $n_1, \ldots, n_l \in \bbN\setminus\{0\}$ such that 
$V$ is isomorphic, as an algebraic group, to
$$ \Gamma^r \times \grp{c_1}^{n_1} \times \ldots \times \grp{c_l}^{n_l}$$
where the multiplication law is the standard coordinate-wise product.

Moreover, this gives a complete classification of \conn\ algebraic groups up to isomorphism:
the conjugacy classes of the groups $\grp{c_1},\dots,\grp{c_l}$ and the exponents $n_i$ and $r$ are uniquely determined
(up to permutation) by the isomorphism class of $(V, \mu)$ as an algebraic group.
\end{thm}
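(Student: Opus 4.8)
The plan is to translate everything into coordinate rings, extract from the group law a strong homogeneity property of $V$, use it (together with the structure theory of $\Gamma$-limit groups) to force the coordinate ring of $V$ into a very rigid shape, and finally read off both the multiplication and the invariants. First, the setup: to an irreducible variety $V\subseteq\Gamma^d$ one attaches its coordinate ring $L_V=(\Gamma\ast F_d)/\sqrt{I(V)}$, which (for $\Gamma$ torsion-free hyperbolic) is a $\Gamma$-limit group, \ie a finitely generated group that is fully residually $\Gamma$; irreducibility of $V$ is exactly this property, and it entails that $V=\Hom_\Gamma(L_V,\Gamma)$ is Zariski-dense in itself. Algebraic maps $V\to V'$ are in natural bijection with $\Gamma$-morphisms $L_{V'}\to L_V$, once one observes that the coordinate ring of $V\times W$ is $L_V\ast_\Gamma L_W$ (a free product over $\Gamma$ of $\Gamma$-limit groups is again one). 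Hence an algebraic group structure $\mu$ on $V$ is exactly a comultiplication $\Delta\colon L_V\to L_V\ast_\Gamma L_V$ together with a counit $\epsilon\colon L_V\to\Gamma$ — the $\Gamma$-point $e\in V$ — satisfying the duals of associativity and of the unit law; that is, $L_V$ is a ``co-group object'' in the category of $\Gamma$-limit groups.

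Second, homogeneity. For each $g\in V$ the left translation $v\mapsto\mu(g,v)$ is an algebraic automorphism of $V$ (substitute the tuple $g$ for the constants of the defining words), and translations act transitively on $V$; together with Zariski-density of $V$ in itself and equational Noetherianity, this shows that $V$ ``looks the same at every point'': any algebraic or first-order configuration witnessed at one $\Gamma$-point is witnessed at every $\Gamma$-point. Concretely, the canonical JSJ decomposition of $L_V$ relative to $\Gamma$ cannot contain any ``distinguished'' vertex or edge.

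Third — and this is the heart of the matter — I would run through the possible pieces in the structure of $L_V$ and eliminate all but the expected ones, using homogeneity together with the co-group identities and the usual machinery for $\Gamma$-limit groups (JSJ decompositions, the shortening argument). A quadratically hanging surface vertex carries far too rich a structure (a mapping-class action, an elaborate representation variety of its own) to sit inside a homogeneous co-group object, and the comultiplication would in any case be forced to ``split'' it, which is impossible; so there are no surface vertices. One then shows every rigid vertex is conjugate into a copy of $\Gamma$ or is cyclic, and that abelian vertices are essentially free factors, the only maximal abelian subgroups of $L_V$ not conjugate into $\Gamma$ being the centralizers $\grp c\oplus\Z$. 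The conclusion is that $L_V$ is a free product over $\Gamma$ of $r$ copies of $\Gamma\ast\Z$ (the coordinate ring of $\Gamma$) and $n_i$ copies of $\Gamma\ast_{\grp{c_i}}(\grp{c_i}\oplus\Z)$ (the coordinate ring of $Z(c_i)$) for non-conjugate maximal cyclic $\grp{c_1},\dots,\grp{c_l}$; equivalently, $V\cong\Gamma^r\times\prod_i Z(c_i)^{n_i}$ as a variety. I expect this rigidity extraction — squeezing it out of homogeneity plus the co-group identities, and in particular ruling out ``twisted'' forms — to be the main obstacle.

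Finally, the multiplication and uniqueness. Once the underlying variety is identified, one describes all algebraic maps from $\Gamma^r\times\prod_i Z(c_i)^{n_i}$ to $\Gamma$ and substitutes them into the counit and co-associativity relations: the unit law forces each coordinate of $\mu$ to factor, after an algebraic change of coordinates, through a single direct factor, and associativity then forces it to be the product there, so $(V,\mu)$ is isomorphic as an algebraic group to the standard $\Gamma^r\times\grp{c_1}^{n_1}\times\dots\times\grp{c_l}^{n_l}$ (and the inverse map is then visibly algebraic, as anticipated in the introduction). For uniqueness, the invariants are intrinsic: the centre of $V$ is a connected algebraic group isomorphic to $\prod_i Z(c_i)^{n_i}$, an algebraic isomorphism $Z(c)\to Z(c')$ must send $c$ to a conjugate of $(c')^{\pm1}$ — so $\grp c$ and $\grp{c'}$ are conjugate — and an induction on the number of factors recovers the $n_i$ and the conjugacy classes $[\grp{c_i}]$, while $r$ is recovered as the dimension of $V$ minus the dimension of its centre.
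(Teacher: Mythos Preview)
Your outline captures the broad architecture, but there are two genuine gaps, and one of them is the central idea of the paper.

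First, a technical error: the coordinate group of $V\times W$ is \emph{not} $L_V*_\Gamma L_W$ in general, only its maximal residually-$\Gamma$ quotient. Already for $V=W=Z(c)$ one has $L_{V\times W}=\Gamma*_{\grp c}(\grp c\oplus\bbZ^2)$, and the commutator $[t_1,t_2]$ dies under the quotient map. This is not fatal, but it means your description of $\Delta$ needs care, and the paper in fact spends real effort (the ``trimming'' of Section~\ref{sec_trimming}) to arrange that $L_{W_0\times W_0}=L_{W_0}*_\Gamma L_{W_0}$ after removing the offending abelian vertex groups.

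Second, and this is the main gap: homogeneity does \emph{not} rule out surface vertices. The paper exhibits (Section~\ref{sec_ex_homogeneous_variety}) an irreducible homogeneous variety whose coordinate group is the fundamental group of a closed surface, with the mapping class group acting freely and transitively on its points; so your sentence ``a surface vertex carries far too rich a structure to sit inside a homogeneous co-group object'' is simply false as stated, and ``the comultiplication would be forced to split it'' is too vague to do work. What actually kills surface vertices---and later forces the triangular structure on the free part of $L_V$---is a \emph{bounded stretch} argument: because the action $V\times V\actson V$ is algebraic, the induced automorphisms $\Delta_{x,y}^*\in\Aut_\Gamma(L_V)$ all have Lipschitz constant bounded independently of $x,y$ (for the word metric in which every element of $\Gamma$ has length~1). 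This bounds translation lengths on any splitting tree where $\Gamma$ is elliptic, hence precludes mapping-class elements of infinite order in the image of $V$, and (via the Handel--Mosher / Guirardel--Horbez alternative) precludes fully irreducible automorphisms on the free part, yielding an invariant flag $L_W\subset L_V^1\subset\dots\subset L_V^r=L_V$. Without this ingredient you cannot eliminate surfaces, nor can you get past ``the unit law forces each coordinate to factor through a single direct factor''---that last step needs the triangular form $\mu^*(T_i)=a_iX_ib_iY_ic_i$ with $a_i,b_i,c_i$ of lower level before the associativity identities can be solved.
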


\begin{rem} \label{rem_inverse} The theorem implies in particular that for any algebraic group $V$ over $\Gamma$, 
the map $V \to V$ sending an element to its inverse is algebraic: in $\Gamma^r\times \grp{c_1}\times\dots \times \grp{c_n}$,
this map is given by $(x_1,\dots,x_r,y_1,\dots y_n)\mapsto
(x_1\m,\dots,x_r\m,y_1\m,\dots y_n\m)$.
\end{rem}

\begin{rem}
As mentioned above, if $V$ is an algebraic group which is not assumed to be connected, 
then the \conn\ component of $V$ containing the neutral element 
is a finite index subgroup which is itself a connected algebraic group.
However, with the definition given above, 
this remark is not enough to deduce a description of all algebraic groups from our result on connected ones. Indeed,
it is not clear even which finite groups can be realised as algebraic groups:
every finite set is an algebraic set but its is not clear whether a given group law can
be expressed as a word map.

With this in mind, it would be more natural to enlarge the category of algebraic groups by allowing the group law to be
given by word maps only locally. For connected algebraic groups, this definition is equivalent
to the previous one. However, it has the advantage of being stable under taking supergroups of finite index: with the new definition,
if $G$ has a finite index subgroup which is isomorphic to a connected algebraic group,
then $G$ itself is isomorphic to an algebraic group.
\end{rem}

\subsection*{Other notions and questions}
One can consider more general notions.
A group $(V,\mu)$ is \emph{definable} over a structure $\Gamma$ (such as a field, or a group) 
if $V$ is a subset of $\Gamma^d$ definable by a first order formula, 
and $\mu$ (or more precisely the graph of $\mu$) is also definable in this way.
Even more generally, a group $(V,\mu)$ is \emph{interpretable} over $\Gamma$
if $V=\tilde V/\sim$ where $\tilde V$ is a definable subset and $\sim$
is a definable equivalence relation, and similarly for the graph of $\mu$.

Over an algebraically closed field, the Weil-Hrushovski theorem states
that any interpretable group is an algebraic group 
(see \cite{Poizat_stable} Theorem 4.13, with (3) from \S 4.1), 
where the notion of algebraic group here should not be restricted to affine algebraic groups but should also include abelian varieties for example.

Let us now consider groups interpretable over a base group $\Gamma$. 
If
$\Gamma$ is a group such as $SL_n(\bbC)$
or the Heisenberg group $\Gamma=H_3(\bbC)$ (or more generally an algebraic group over $\bbC$, thought of as an abstract group), 
then any interpretable group over $\Gamma$ will also be interpretable over $\bbC$. Thus
the theorem by Weil-Hrushovski shows that all interpretable groups
over $\Gamma$ are actually isomorphic to algebraic groups over $\bbC$.
On the other hand, starting from the Heisenberg group $\Gamma=H_3(R)$
over a ring $R$, then the ring $(R,+,.)$ is interpretable over the group $\Gamma$ \cite{Malcev_rings_and_groups}, 
so conversely any algebraic group over $\bbC$ is interpretable over $H_3(\bbC)$.
But taking $R=\bbZ$, one gets that arithmetic is interpretable over $H_3(\bbZ)$
and it easily follows that every finitely presented group 
is interpretable over $H_3(\bbZ)$.

Returning to the case where $\Gamma$ is a torsion-free hyperbolic group (and even a free group), we ask:
\begin{pb}
        Let $\Gamma$ be a torsion-free hyperbolic group. Classify the definable and interpretable groups over $\Gamma$.
\end{pb}

We note that Byron and Sklinos proved, 
relying on Sela's work on the first order theory of free groups, that no infinite field is
interpretable over the free group  \cite{ByronSklinos_fields,Sklinos_fields,Sela_diophantine6,Sela_diophantine9}.

\subsection*{Ingredients of the proof.}
Our proof of Theorem \ref{thm_main_result} uses
basic tools from algebraic geometry over groups \cite{BMR_algebraicI}.
The central notion is that of the \emph{group of functions} $L_V$ over a variety $V$ (also called \emph{coordinate group} in the literature). 
For instance, in the case where $\Gamma$ is a free group, coordinate groups $L_V$ are precisely finitely generated residually free groups. When $V$ is irreducible (see below), 
$L_V$ is actually a \emph{limit group}, a class of well behaved groups 
which has been widely studied  \cite{Sela_diophantine1,KhMy_irreducible2,Sela_diophantine7,GrWi_structure}.
The group $L_V$ contains a preferred copy of $\Gamma$ corresponding to the constant functions
(except for the degenerate case $V=\es$).
Given a point $v$ in $V$, the evaluation homomorphism $L_V\ra \Gamma$ 
is defined by evaluating a function
$f\in L_V$ at the point $v$.
This gives a bijective correspondence which identifies $V$
with the set $\HOM$ of all homomorphisms $L_V\to \Gamma$
that restrict to the identity on the preferred copy of $\Gamma$ in $L_V$.

Of fundamental importance will be the functoriality of the correspondence between $V$ and $L_V$: a morphism of algebraic varieties $\alpha: V \to W$ corresponds to a group morphism $\alpha^*:L_W \to L_V$, and conversely. 
In particular, the group of automorphisms of $V$ (as an algebraic variety)
is isomorphic to $\Aut_\Gamma(L_V)$, the group of automorphisms of $L_V$
that are the identity on $\Gamma$.
For example, if $V$ is homogeneous under the action of its automorphism group,
then $\Aut_\Gamma(L_V)$ acts transitively on $\HOM$.
Using tools such as Grushko decompositions and JSJ decompositions, which help describe automorphisms of $L_V$,
we get strong
restrictions on $L_V$ and thus on $V$.

However this is not enough to complete the description of algebraic groups, since 
there are varieties $V$ that are homogeneous but do not correspond
to algebraic groups over $\Gamma$. 
In Section \ref{sec_ex_homogeneous_variety} for instance, we describe a variety $V$ over $\Gamma$ a free group, 
on which there is a free transitive action
of the mapping class group of a surface, but it follows from our classification that $V$ does not admit a structure of algebraic group.

If $V$ is an algebraic group, its action on itself yields a representation
of $V$ into the group $\Aut_\Gamma(L_V)$.
The fact that this representation comes from an algebraic action of an algebraic
group gives strong restrictions on the image of $V$ in $\Aut_\Gamma(L_V)$.
For example, in algebraic geometry over a field $k$, if an algebraic group acts
algebraically on the affine space $k^n$, then any orbit for its action on the ring of functions $k[X_1,\dots X_n]$
has bounded degree.
The analogous statement here says that the action of $V$ on the group of functions $L_V$
has bounded stretch:
there exists a constant $C$ such that all elements of $V$ act
on $L_V$ with Lipschitz constant at most $C$
(see Lemma \ref{lem_alg_action}) where the length of elements of $L_V$ 
is measured in a word metric
where all elements of $\Gamma\setminus\{1\}$ have length 1.

Because of the lack of local finiteness of the metric involved, 
this does not imply finiteness of the image of $V$.
For instance, if $L_V=\Gamma*\grp{X_1,\dots,X_n}$,
and given two tuples $a=(a_1,\dots,a_n),b=(b_1,\dots,b_n)\in \Gamma$, 
the automorphism $\tau_{a,b}:X_i\mapsto a_iX_ib_i$
is an automorphism whose powers have bounded Lipschitz constants,
and which usually has infinite order.
Still, this gives strong restrictions on the representation of $V$, and for instance, staying in the case where $L_V=\Gamma*\grp{X_1,\dots,X_n}$,
it prevents its image from containing complicated automorphisms such as 
fully irreducible automorphisms 
(in the sense of free products relative to $\Gamma$).
Applying a result by Handel and Mosher \cite{HaMo_subgroup} 
and generalized by Horbez and the first author \cite{GH_boundaries},
one deduces that there is a triangular structure invariant under the image of $V$ in $\Aut_\Gamma(L_V)$ (the image of $V$ looks like a unipotent group).

The endgame is then algebraic: using the fact that the action of $V$
on $V$ is triangular, one can then express the associativity of the group law to deduce that our algebraic group is isomorphic to a standard one.

\subsection*{Organization of the paper}

The paper is structured as follows: in Section \ref{sec_alg_geom_tools}, we survey tools of algebraic geometry over groups and the function group
$L_V$ associated to a variety $V$.

In Section \ref{sec_gamma_limit}, we recall some results about $\Gamma$-limit groups, in particular we define 
the JSJ decomposition for such groups and give some of their properties. 

In Section \ref{sec_basic_alg_groups}, we consider algebraic groups over $\Gamma$. We prove a useful formula expressing the group law under the identification of $V$ with $\Hom_{\Gamma}(L_V)$, and express homogeneity of the underlying variety under this same identification.

We use this in Section \ref{sec_gp_functions_homogeneous} to deduce a number of constraints that the JSJ decomposition of the function group $L_V$ of a homogeneous irreducible variety must satisfy.

In Section \ref{sec_trimming}, 
we construct a canonical algebraic subgroup $(W, \mu|_{W \times W})$ of $(V, \mu)$ 
associated to the abelian factors in $V$.
We deduce that the JSJ decomposition of $L_W$ and of $L_V$ 
have no non-abelian rigid vertices other than the one containing the constants group $\Gamma$. 

In Section \ref{sec_bounded_stretch}, we introduce the bounded stretch argument
and use it to show that the JSJ decomposition of $L_V$ cannot contain any surface type vertices either. 

This enables us in Section \ref{sec_underlying_variety} to give a complete description of the various possibilities for the variety $V$, via a description of the JSJ decomposition of $L_V$. 

Finally, in Section \ref{sec_mult_table},
we use the bounded stretch argument to find a triangular structure
of the image of $V$ and $V^2$ in $\Aut_\Gamma(L_V)$,
and use it to find a change of coordinates 
under which the group law becomes the standard coordinate-wise product, 
thus completing the proof of Theorem \ref{thm_main_result}.

\subsection*{Acknowledgement}
The first author was partially supported by  European Research Council (ERC GOAT 101053021), by the french ANR grant ANR-22-CE40-0004 GOFR, 
and benefited from the support of the French government ”Investissements d’Avenir” program integrated to France 2030 (ANR-11-LABX-0020-01).
The second author was supported by the ISRAEL SCIENCE FOUNDATION (grant No. 2176/20) and CNRS.

\section{Algebraic geometric tools over groups}\label{sec_alg_geom_tools}

In all this paper, we fix $\Gamma$ a torsion-free hyperbolic group
which we assume to be non-elementary 
(i.e.~containing a non-abelian free group).

\subsection{$\Gamma$-groups.} Following \cite{BMR_algebraicI}, 
a \emph{$\Gamma$-group} $G$ is a group $G$ together
with a specified monomorphism $\iota:\Gamma\ra G$. 
A 
\emph{$\Gamma$-homomorphism} $\phi:G\ra G'$ is a homomorphism that commutes with the specified embeddings.
We will usually keep $\iota$ implicit, and identify $\Gamma$ with its image;
thus, a $\Gamma$-homomorphism is a homomorphism that restricts to the identity on $\Gamma$.
We denote by $\Hom_\Gamma(G,G')$ the set of $\Gamma$-homomorphisms.

For instance $\Gamma*\grp{x_1,\dots,x_d}$ (where $\grp{x_1,\dots,x_d}$ is the free group on the $x_i$'s) is a $\Gamma$-group 
whose elements are called $\Gamma$-words: 
they can be viewed as
words in the variables $x_1^\pm,\dots,x_d^\pm$ with coefficients in $\Gamma$.

A subset $S$ of a $\Gamma$-group $G$ is a \emph{$\Gamma$-generating set} if
$\Gamma\cup S$ generates $G$ as a group.

\begin{dfn} A $\Gamma$-group $G$ is residually $\Gamma$ if for any $g \in G \setminus \{1\}$ there exists a $\Gamma$-homomorphism $h: G \to \Gamma$ such that $h(g) \neq 1$.
\end{dfn}

See Definition \ref{dfn_fully} for the stronger notion of being fully residually $\Gamma$.

\begin{dfn}
Given a $\Gamma$-group $G$, its \emph{largest residually $\Gamma$ quotient} is the group
$$G/K \quad \text{where } K=\bigcap_{h\in \HOM[G]}\ker h$$ 
\end{dfn}

Clearly, $G/K$ is residually $\Gamma$ and if $G\to G'$ is a $\Gamma$-homomorphism to
a $\Gamma$-group $G'$ which is residually $\Gamma$,
it factors through $G/K$.

If $G=\grp{\Gamma,x_1,\dots,x_d|\Sigma}$ for a collection $\Sigma$ of $\Gamma$-words, then $K$ is called the \emph{radical} of $\Sigma$
in \cite{BMR_algebraicI}.

\subsection{Variety over $\Gamma$.} A system of equations over $\Gamma$ in the variables $x_1, \ldots, x_d$ is a set of words 
$$\Sigma=\{ w_i(x_1, \ldots, x_d) \mid i \in I\}\subset \Gamma * \langle x_1, \ldots, x_d \rangle.$$
The variety $V_{\Sigma} \subseteq \Gamma^d$ it defines is
$$V_{\Sigma} = \{ (v_1, \ldots, v_d) \in \Gamma^d \mid w_i(v_1, \ldots, v_d)=1 \textrm{ for each } i \in I\}.$$

Following Lemma 6(6) p.~49  and Proposition 1 p.~31 in \cite{BMR_algebraicI},
since $\Gamma$ is CSA and non abelian, the collection of varieties is stable under finite union and is the collection of closed sets of a topology on $\Gamma^d$, called the Zariski topology (see Theorem 3 p.~50 in \cite{BMR_algebraicI}).

\subsection{Group of functions.} Let $V$ be a subset of $\Gamma^d$. A word map on $V$ is a map $f:V \to \Gamma$ for which there exists a word
$w(x_1, \ldots, x_d) \in \Gamma * \langle x_1, \ldots, x_d \rangle$ 
such that for
all $(v_1,\dots,v_d)\in V$,
$f(v_1, \ldots, v_d) = w(v_1, \ldots, v_d)$.

For example, the projection on the $k$-th coordinate $V \to \Gamma, (v_1, \ldots, v_d) \mapsto v_k$ is a word map defined by the $\Gamma$-word $w(x_1, \ldots, x_d)=x_k$. We call this map the $k$-th coordinate function,
and we denote it by $X_k$. 

\begin{dfn}[Group of functions] 
The group of functions $L_V$ on $V$ is the set of word maps $f: V \to \Gamma$, endowed with multiplication in the range: given $f, f' \in L_V$ we define $ff': V \to \Gamma$ by setting $ff'(v)=f(v)f'(v)$.

\end{dfn} 

The group of functions on $V$ is also known as its \emph{coordinate group} \cite{BMR_algebraicI}.

All the varieties we consider will be non empty. In this case, $L_V$ is a $\Gamma$-group where $\Gamma$ embeds in $L_V$ as the set of constant maps.

By definition of a word map, there is a surjective map $\Gamma*\grp{x_1,\dots, x_d}\ra L_V$ sending a word $w$ to the corresponding word map, and sending the generator $x_k$ 
to the coordinate function $X_k$. 
Thus $(X_1,\dots,X_d)$ is a $\Gamma$-generating family of $L_V$.

\subsection{Equations of the variety and functions on $V$.}
If $V$ is a variety defined by $\Sigma=\{w_i(x_1,\dots,x_d) \mid i \in I\}$, then the map $\Gamma*\grp{x_1,\dots,x_d}\ra L_V$ vanishes on each $w_i$, hence
factors through the group $ L_{\Sigma}=\grp{\Gamma,x_1,\dots,x_d\mid w_i, i\in I}$.

Given a point $v=(v_1,\dots,v_d)\in V$, we denote the evaluation homomorphism by $h_v:\Gamma*\grp{x_1,\dots, x_d}\ra \Gamma$, it is defined by
$h_v(w)=w(v_1,\dots,v_d)$ for any word $w\in \Gamma*\grp{x_1,\dots, x_d}$. 
This is a $\Gamma$-homomorphism so $h_v\in \Hom_\Gamma(\Gamma*\grp{x_1,\dots, x_d},\Gamma)$.

Every evaluation homomorphism $h_v$ factors through $ L_{\Sigma}$
and through $L_V$. 
We still call the factored map $ L_\Sigma\ra \Gamma$ or $L_V\ra \Gamma$ an evaluation homomorphism and denote it by $h_v$.

\begin{lem} \label{lem_RF}
Every $h\in \Hom_\Gamma( L_\Sigma ,\Gamma)$
agrees with some evaluation homomorphism $h_v:L_{\Sigma} \to \Gamma$.
\end{lem}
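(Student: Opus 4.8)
The plan is to unwind the definitions and observe that the statement is essentially a tautology once one checks that a $\Gamma$-homomorphism $h\colon L_\Sigma\to\Gamma$ is determined by where it sends the generators $x_1,\dots,x_d$, and that the tuple of images lands in $V_\Sigma$. First I would set $v_i = h(x_i)\in\Gamma$ for $i=1,\dots,d$, and let $v=(v_1,\dots,v_d)\in\Gamma^d$. The first step is to verify that $v\in V_\Sigma$: for each $i\in I$, the relator $w_i(x_1,\dots,x_d)$ is trivial in $L_\Sigma=\grp{\Gamma,x_1,\dots,x_d\mid w_i,\,i\in I}$, so $1 = h(w_i(x_1,\dots,x_d)) = w_i(h(x_1),\dots,h(x_d)) = w_i(v_1,\dots,v_d)$, where the middle equality uses that $h$ is a group homomorphism restricting to the identity on $\Gamma$ (so it commutes with forming the word $w_i$, whose constants lie in $\Gamma$). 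Hence $v\in V_\Sigma$, so the evaluation homomorphism $h_v\colon L_\Sigma\to\Gamma$ is defined.

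**Checking the two maps agree.**

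The second step is to show $h = h_v$. Both are $\Gamma$-homomorphisms from $L_\Sigma$ to $\Gamma$, and $L_\Sigma$ is generated as a group by $\Gamma$ together with $x_1,\dots,x_d$ (this is exactly what it means for $(x_1,\dots,x_d)$ to be a $\Gamma$-generating family). On $\Gamma$ both maps are the identity; on each generator $x_i$ we have $h_v(x_i) = x_i(v_1,\dots,v_d) = v_i = h(x_i)$ by the very definition of $v_i$ and of the coordinate evaluation. Since two homomorphisms agreeing on a generating set are equal, $h = h_v$ on all of $L_\Sigma$. This completes the argument; the statement then transfers verbatim to $L_V$ since $L_\Sigma$ surjects onto $L_V$ and $\Hom_\Gamma(L_V,\Gamma)$ is the subset of $\Hom_\Gamma(L_\Sigma,\Gamma)$ factoring through this surjection.

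**Where the content (if any) lies.**

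I expect no real obstacle: the lemma is a bookkeeping statement establishing the promised bijective correspondence between $V$ and $\Hom_\Gamma(L_V,\Gamma)$, and the only thing to be careful about is the direction of the logic — one must start from an \emph{arbitrary} $h$ and produce the point $v$, rather than starting from a point. The one place to be slightly attentive is the claim $h(w_i(x_1,\dots,x_d)) = w_i(h(x_1),\dots,h(x_d))$: this requires that $h$ fixes the constants appearing in $w_i$, which is guaranteed precisely because $h$ is a $\Gamma$-homomorphism. Everything else is the standard fact that a homomorphism is determined by its values on generators. If anything deserves emphasis in the write-up, it is simply recording that this lemma, combined with the surjection $\Gamma*\grp{x_1,\dots,x_d}\twoheadrightarrow L_\Sigma\twoheadrightarrow L_V$, yields the identification $V\cong\Hom_\Gamma(L_V,\Gamma)$ used throughout the rest of the paper.
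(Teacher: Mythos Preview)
Your proof is correct and follows essentially the same approach as the paper: define $v=(h(x_1),\dots,h(x_d))$, check $v\in V_\Sigma$ using that $h$ is a $\Gamma$-homomorphism, and then verify $h=h_v$. The only cosmetic difference is that the paper checks $h=h_{v}$ on an arbitrary word $\bar w\in L_\Sigma$ directly, whereas you argue on generators and invoke the fact that homomorphisms agreeing on a generating set coincide; these are of course equivalent.
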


\begin{proof}
Given $h\in \Hom_\Gamma( L_\Sigma ,\Gamma)$,
consider the point $v_h=(h(\bar x_1),\dots,h(\bar x_d))\in \Gamma^d$
where we denote by $\bar x_i\in L_\Sigma$ the image of $x_i$ 
under the morphism $\Gamma*\grp{x_1,\dots,x_d}\ra L_\Sigma$. 
The point $v_h$ lies in the variety $V$. 
Indeed, for each $w_i\in \Sigma$,
we have $w_i(h(\bar x_1),\dots,h(\bar x_d))=h(w_i(\bar x_1,\dots,\bar x_d))=1$  because $h$ is a $\Gamma$-morphism.
We check that $h=h_{v_h}$: 
given any word $w\in \Gamma*\grp{x_1,\dots,x_d}$ with image $\bar w\in  L_\Sigma$, we have
$$h_{v_h}(\bar w)=w(h(\bar x_1),\dots,h(\bar x_d)) = h( w(\bar x_1, \ldots, \bar x_d)) = h(\bar w).$$
This concludes the proof.
\end{proof}

Two words in $\Gamma*\grp{x_1,\dots,x_d}$ represent the same function
on $V$ if and only if they have the same image under all evaluations maps $h_v$ for $v\in V$.
Thus one can alternatively describe $L_V$ as the quotient  
$$L_V= L_\Sigma/\bigcap_{v\in V}\ker h_v.$$

By construction, $L_V$ is residually $\Gamma$: for every function $f\in L_V\setminus \{1\}$
there is a $\Gamma$-morphism $L_V\ra \Gamma$ which does not kill $f$, namely the evaluation map $h_v$
at a point $v\in V$ where $f$ does not vanish. 

As an immediate consequence of Lemma \ref{lem_RF} we get
\begin{cor}\label{cor_maxRF}
If $V$ the variety defined by the set of equations $\Sigma$, then
 $L_V$ is the largest 
 quotient of $ L_\Sigma$ that is residually $\Gamma$.
 
 In particular, the canonical map from $\Hom_{\Gamma}(L_{V}, \Gamma)$ to $\Hom_{\Gamma}(L_{\Sigma}, \Gamma)$ is a bijection.
\end{cor}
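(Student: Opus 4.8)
The plan is to combine Lemma~\ref{lem_RF} with the explicit presentation $L_V = L_\Sigma/\bigcap_{v\in V}\ker h_v$ established just above the statement, together with the universal property of the largest residually $\Gamma$ quotient.

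First I would unwind the definition: the largest residually $\Gamma$ quotient of $L_\Sigma$ is $L_\Sigma/K$ with $K=\bigcap_{h\in\HOM[L_\Sigma]}\ker h$. Every evaluation map $h_v$ for $v\in V$ is a $\Gamma$-homomorphism $L_\Sigma\ra\Gamma$, so $K\subseteq\bigcap_{v\in V}\ker h_v$. Conversely, Lemma~\ref{lem_RF} says every $h\in\HOM[L_\Sigma]$ agrees with some evaluation $h_v$ at a point $v\in V$, hence $\bigcap_{v\in V}\ker h_v\subseteq K$. Therefore the two normal subgroups coincide and $L_\Sigma/K = L_\Sigma/\bigcap_{v\in V}\ker h_v = L_V$. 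Since $L_V$ was already observed to be residually $\Gamma$, this identifies $L_V$ with the largest residually $\Gamma$ quotient of $L_\Sigma$.

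For the ``in particular'' I would argue directly from the universal property. Write $\pi\colon L_\Sigma\ra L_V$ for the canonical quotient map, a surjective $\Gamma$-homomorphism; precomposition $f\mapsto f\circ\pi$ then gives an injection $\Hom_\Gamma(L_V,\Gamma)\into\Hom_\Gamma(L_\Sigma,\Gamma)$. For surjectivity, note that $\Gamma$ is trivially residually $\Gamma$, so by the defining property of the largest residually $\Gamma$ quotient every $h\in\Hom_\Gamma(L_\Sigma,\Gamma)$ factors as $h=f\circ\pi$ for some $f\in\Hom_\Gamma(L_V,\Gamma)$. Hence the canonical map is a bijection.

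The whole argument is essentially bookkeeping, and I do not expect any genuine obstacle: the only point requiring care is the identification of the two descriptions of the relevant normal subgroup of $L_\Sigma$ --- the intersection of kernels of all $\Gamma$-homomorphisms to $\Gamma$ versus the intersection of kernels of the evaluation homomorphisms $h_v$, $v\in V$ --- and this identification is exactly the content of Lemma~\ref{lem_RF}.
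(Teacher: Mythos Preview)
Your proof is correct and is exactly the expansion of what the paper means by ``immediate consequence of Lemma~\ref{lem_RF}'': you identify $\bigcap_{v\in V}\ker h_v$ with $\bigcap_{h\in\HOM[L_\Sigma]}\ker h$ via Lemma~\ref{lem_RF}, and the bijection on $\Hom_\Gamma$ follows from the universal property. There is nothing to add.
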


\begin{example}  Fix an element $c\in \Gamma\setminus\{1\}$ which is not a proper power so that the centralizer $Z(c)$ of $c$ is $\grp{c}$.
Let $V = Z(c)^d \subset \Gamma^d$
 be the variety defined by the system of equations $\Sigma=\{ [x_1, c],\dots, [x_d,c] \}$. 
Here $L_{\Sigma} = \grp{\Gamma,x_1,\dots,x_d\mid [x_1,c], \ldots ,[x_d, c]}$
and $$L_V \simeq \Gamma *_{\grp{c}} (\grp{c}\oplus \bbZ^d)$$ 
is its largest residually $\Gamma$ quotient (see for instance \cite[Theorem 1]{BMR_discriminating} to see it is indeed residually $\Gamma$).
\end{example}

\subsection{Correspondence $V \leftrightarrow \Hom_{\Gamma}(L_V, \Gamma)$.} 

Consider the maps
$$\left\{\Mx{V&\ra&\Hom_\Gamma(L_V,\Gamma)\\ v &\mapsto& h_v}\right.
\text{\quad and \quad}
\left\{\Mx{\Hom_\Gamma(L_V,\Gamma)&\ra& V\\ h &\mapsto& v_h=(h(X_1),\dots,h(X_d))}\right.
$$
The first associates to a point of $V$ its evaluation map, the second is the map $h \mapsto v_h$ that appears in the proof of the previous lemma, up to the bijection $\Hom_{\Gamma}(L_{V}, \Gamma) \rightarrow \Hom_{\Gamma}(L_\Sigma, \Gamma)$ mentioned in Corollary \ref{cor_maxRF}.

We claim that these maps are inverses of each other.
 Indeed, we already proved in Lemma \ref{lem_RF} that $h_{v_h}=h$ for all $h\in \Hom_\Gamma( L_{\Sigma},\Gamma)$. This is true on $\Hom_{\Gamma}(L_V, \Gamma)$ via the canonical bijection between $\Hom_{\Gamma}( L_{\Sigma}, \Gamma)$ and $\Hom_{\Gamma}(L_V, \Gamma)$. 
 For the other direction, we consider $v \in V$ and $h_v:L_V\ra \Gamma$ the corresponding evaluation morphism.  
 We have $$v_{h_v}=(h_v(X_1),\dots,h_v(X_d))=(X_1(v),\dots,X_d(v))=v.$$

\subsection{Algebraic maps. } Let $V \subseteq \Gamma^d ,V' \subseteq \Gamma^{d'}$ be algebraic varieties over $\Gamma$. An algebraic map from $V$ to $V'$ is a map 
$F: V \to V'$ 
such that there exist word maps $f_1,\dots,f_{d'}:V\ra \Gamma$ such that 
$F(v)= (f_1(v), \ldots, f_{d'}(v))$ for all $v\in V$.
An algebraic automorphism  $F:V\ra V$ is a bijective algebraic map whose inverse is an algebraic map.
  
  Recalling that elements of $L_V$ (respectively $L_{V'}$) are word maps $V \to \Gamma$ (respectively $V' \to \Gamma$) 
 any algebraic map $F:V\ra V'$ induces a $\Gamma$-homomorphism
  $$ F^*:\left\{\Mx{ L_{V'} &\to& L_V \\ f &\mapsto &f \circ F.}\right.$$
  Note that given a variety $V''$ and an algebraic map $G: V' \to V''$, we have $(G \circ F)^*=F^* \circ G^*$.
  In other words, there is a contravariant functor from the category of algebraic varieties over $\Gamma$ to the category of $\Gamma$-groups that are residually $\Gamma$.

  Conversely, given a $\Gamma$-homomorphism $\theta: L_{V'} \to L_V$ 
  (with $\Gamma$-generating families $(X_1,\dots,X_d)$ and $(X'_1,\dots,X'_{d'})$ of $L_V$ and $L_{V'}$ respectively)
  there is an induced 
  map $\theta_{*}: \Hom_{\Gamma}(L_V, \Gamma) \to \Hom_{\Gamma}(L_{V'}, \Gamma)$ given by $h \mapsto h \circ \theta$. Via the identification between $V$ and $\Hom_{\Gamma}(L_V, \Gamma)$ (respectively between $V'$ and $\Hom_{\Gamma}(L_{V'}, \Gamma)$)
  induced by the $\Gamma$-generating families,
 this can be seen as an algebraic map $V \to V'$:
  the image of $X'_1,\dots, X'_{d'}\in L_{V'}$ under $\theta$ can be written 
  using the $\Gamma$-generating set 
  $\{X_1,\dots,X_{d}\}$ of $L_{V}$, say $\theta(X'_i)=u_i(X_1,\dots,X_{d})$
  for some $\Gamma$-words $u_i\in \Gamma*\grp{x_1,\dots,x_{d}}$,
  and the algebraic map $V\ra V'$ is defined by 
  $$(v_1,\dots, v_d)\in V\mapsto (u_1(v_1,\dots,v_d),\dots, u_{d'}(v_1,\dots,v_d)).$$

  In particular, there is an isomorphism between
  the group $\Aut(V)$ of algebraic automorphisms of the variety $V$, and the group 
  $\Aut_{\Gamma}(L_V)$ of $\Gamma$-automorphisms of the $\Gamma$-group $L_V$.
  
 \subsection{Noetherianity and irreducible varieties.} Sela proved the following
  \begin{thm}[\cite{Sela_diophantine7}] \label{thm_eq_Noeth}  
Torsion-free hyperbolic groups are equationally Noetherian: for any system of equations $\Sigma\subset \Gamma*\grp{x_1,\dots,x_n}$,
there exists a finite subsystem $\Sigma'\subset \Sigma$ 
such that $V_{\Sigma}=V_{\Sigma'}$.
  \end{thm}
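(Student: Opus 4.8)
The plan is to reduce the statement to a descending chain condition and feed it into the theory of $\Gamma$-limit groups. As $\Gamma$ is hyperbolic it is finitely generated, hence countable, so $G:=\Gamma*\grp{x_1,\dots,x_n}$ and $\Sigma\subseteq G$ are countable; we may therefore assume $\Sigma=\{w_1,w_2,\dots\}$ is infinite (the finite case being trivial) and set $\Sigma_k=\{w_1,\dots,w_k\}$. It suffices to show that the chain $V_{\Sigma_1}\supseteq V_{\Sigma_2}\supseteq\cdots$ stabilizes, since then $V_\Sigma=\bigcap_kV_{\Sigma_k}$ equals $V_{\Sigma_N}$ for some $N$ and $\Sigma'=\Sigma_N$ works. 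So I would argue by contradiction: if the chain does not stabilize, there is an infinite set $S\subseteq\N$ and, for each $k\in S$, a $\Gamma$-homomorphism $h_k\colon G\to\Gamma$ with $h_k(w_i)=1$ for all $i\le k$ but $h_k(w_{k+1})\ne1$. Identify $\Hom_\Gamma(G,\Gamma)$ with $\Gamma^n$ via $h\mapsto(h(x_1),\dots,h(x_n))$. If some point of $\Gamma^n$ equals $h_k$ for infinitely many $k\in S$, that common homomorphism $h$ would satisfy $h(w_i)=1$ for every $i$ (the relevant indices being unbounded) and yet $h(w_{k+1})\ne1$, which is absurd; so, since balls in $\Gamma^n$ are finite, we may assume $\max_i|h_k(x_i)|\to\infty$ as $k\to\infty$ in $S$.

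Next I would run the Bestvina--Paulin construction. View each $h_k$ ($k\in S$) as an action of $G$ on the Cayley graph $X$ of $\Gamma$ through $h_k$, rescale the metric of $X$ by $1/\lambda_k$ with $\lambda_k=\max_i|h_k(x_i)|\to\infty$, and extract from a subsequence an equivariant Gromov--Hausdorff limit: a minimal, nontrivial action of $G$ on an $\R$-tree $T$. Since $\Gamma$ is torsion-free hyperbolic, the limit action is stable and has trivial tripod stabilizers (with small arc stabilizers), so the Rips machine applies to it. Then I would consider the stable kernel $K=\{g\in G\tq h_k(g)=1\text{ for all but finitely many }k\in S\}$, a normal subgroup with $K\cap\Gamma=\{1\}$, and the $\Gamma$-limit group $L_\omega=G/K$; since every element of $K$ acts trivially on $X$ eventually, it acts trivially on $T$, so the $G$-action on $T$ descends to $L_\omega$. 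The key point is that $w_i\in K$ for all $i$ (because $h_k(w_i)=1$ whenever $k\ge i$ in $S$), so every $w_{k+1}$, $k\in S$, is trivial in $L_\omega$.

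The step I expect to be the main obstacle is the structure theory of $L_\omega$: analysing its action on $T$ with the Rips machine --- a graph-of-groups decomposition over cyclic subgroups with abelian and surface-type pieces and rigid vertices controlled by the shortening argument --- and iterating, one obtains a finite hierarchy and concludes that $L_\omega$ is finitely presented. This is essentially Sela's theorem that $\Gamma$-limit groups over a torsion-free hyperbolic group are finitely presented; porting the $\R$-tree analysis and the shortening argument from the free group to a hyperbolic base (where hyperbolicity is used, e.g.\ through the smallness of stabilizers in limiting trees) is the real content of \cite{Sela_diophantine7}. Granting it, the endgame is immediate: $G$ is itself finitely presented (hyperbolic groups being so), hence $K$ is the normal closure of finitely many elements $r_1,\dots,r_m$; each $r_\ell$ lies in $\ker h_k$ for all large $k\in S$, so for such $k$ the homomorphism $h_k$ factors through $L_\omega$ and therefore kills $w_{k+1}$, contradicting $h_k(w_{k+1})\ne1$. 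This contradiction establishes that the chain stabilizes, and hence that $\Gamma$ is equationally Noetherian.
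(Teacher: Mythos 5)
The paper states this theorem without proof, attributing it directly to \cite{Sela_diophantine7}, so there is no in-paper argument to compare against. Your sketch follows what is essentially Sela's strategy: reduce to a descending chain condition, extract a non-stabilizing sequence of $\Gamma$-homomorphisms, run the Bestvina--Paulin construction to obtain a limit $\mathbb{R}$-tree, pass to the quotient $L_\omega=G/K$ by the stable kernel, analyze the $L_\omega$-action on the tree with the Rips machine and the shortening argument to conclude finite presentability, and then close the argument via normal generation. The endgame logic is sound (note that it only requires $G$ to be finitely \emph{generated}, not finitely presented, to deduce that $K$ is normally finitely generated once $G/K$ is finitely presented --- invoking finite presentation of $G$ is harmless but unnecessary). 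You also correctly locate the technical weight: porting the $\mathbb{R}$-tree analysis and the shortening argument from a free to a hyperbolic base is indeed the content of \cite{Sela_diophantine7}.

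One small precision worth flagging so the argument does not look circular: a priori $L_\omega=G/K$ is only a marked-group limit of subgroups of $\Gamma$, and the equivalence between such limits and finitely generated fully residually $\Gamma$ groups (the paper's working definition of $\Gamma$-limit group) is itself an output of the theory that is closely tied to equational Noetherianity. What keeps your argument non-circular is exactly what you wrote: the Rips-machine analysis and the ensuing finite-presentability conclusion apply directly to $L_\omega$ acting on the limit tree, not to an abstractly given fully residually $\Gamma$ group, so no prior identification of the two notions is needed.
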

  
Equivalently, any descending chain of varieties $V_1 \supset V_2 \supset V_3 \ldots$ is stationary. 
  
  \begin{dfn}
  A non-empty variety is irreducible if it cannot be written as a finite union of proper subvarieties.
  \end{dfn}
  
  As a straightforward corollary of Theorem \ref{thm_eq_Noeth} we get that any variety $V$ can be written as a finite union $V_1 \cup \ldots \cup V_s$ 
  of irreducible varieties. 
Additionally, there is a unique such decomposition for which $s$ is minimal 
(equivalently, no inclusion $V_i\subset V_j$ holds for $i\neq j$): in this case, $V_1,\dots,V_s$ are called the \emph{irreducible components} of $V$.

\begin{dfn}
Let $G,G'$ be two $\Gamma$-groups. A collection $\calh$ of $\Gamma$-homomorphisms 
$G\ra G'$ is \emph{discriminating} if
for every finite subset $F\subset G$
there exists $h\in \calh$ such that $h$ is injective in restriction to $F$.
\end{dfn}

\begin{rem}\label{rem_discriminating}
For a sequence of $\Gamma$-homomorphisms, 
we will use the following variant of this definition: 
a sequence of $\Gamma$-homomorphisms $h_i:G\ra G'$ ($i\in \bbN$) is discriminating if
for every finite subset $F\subset G$, for all $i$ large enough, $h_i$
is injective in restriction to $F$. 
Note that for $G$ countable, a collection $\calh$ of $\Gamma$-homomorphisms is discriminating if and only if it contains a discriminating sequence.
\end{rem}

Equivalently, $\calh$ is discriminating if for any finite collection of
non-trivial elements $g_1,\dots,g_n\in G\setminus\{1\}$,
there exists $h\in \calh$ such that $h(g_1),\dots,h(g_n)$ are all non-trivial.

\begin{dfn}\label{dfn_fully}
A $\Gamma$-group $G$ is \emph{fully residually $\Gamma$} if
it admits a discriminating family of $\Gamma$-homomorphisms to $\Gamma$
(equivalently $\Hom_\Gamma(G,\Gamma)$ is discriminating).
\end{dfn}
  
  Sela proved that the class of finitely generated $\Gamma$-groups which are fully residually $\Gamma$ coincide with that of $\Gamma$-limit groups \cite{Sela_diophantine7}
  (actually, in \cite{Sela_diophantine7} \S1, Sela works with equations which do not involve constants in $\Gamma$, so the $\Gamma$-limit groups there don't come with a preferred copy $\Gamma$).

The following lemma 
is a slight variation on \cite[Theorem D2]{BMR_algebraicI}.
\begin{lem}\label{lem_discriminating_V0}
The variety $V$ is irreducible if and only if $L_V$ is fully residually  $\Gamma$.

In this case, if $V_0\subset V$ is a dense subset (for the Zariski topology),
    then the family of evaluation homomorphisms $\{h_v | v\in V_0\}\subset \HOM$ is discriminating on $L_V$.
\end{lem}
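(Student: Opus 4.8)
The plan is to prove the two directions separately, using throughout the bijection $v\mapsto h_v$ between $V$ and $\Hom_\Gamma(L_V,\Gamma)$ established above, together with the description of $L_V$ as the quotient of $L_\Sigma$ by $\bigcap_{v\in V}\ker h_v$ — so that an element $g\in L_V$ is trivial precisely when the word representing it vanishes identically on $V$, and a subset $Z\subseteq V$ of the form $\{v\in V\mid g(v)=1\}$ is a subvariety of $V$ for every $g\in L_V$.

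For the implication ``$L_V$ fully residually $\Gamma$ $\Rightarrow$ $V$ irreducible'' I would argue by contraposition. Suppose $V=V_1\cup V_2$ with $V_1,V_2$ proper subvarieties. For $i=1,2$, since $V_i\subsetneq V$ there is a point of $V\setminus V_i$, hence a word $w_i$ in a defining system of $V_i$ whose image $f_i\in L_V$ is non-trivial; by construction $f_i$ vanishes on $V_i$. If $\Hom_\Gamma(L_V,\Gamma)$ were discriminating, there would exist $h$ with $h(f_1)\neq 1$ and $h(f_2)\neq 1$; writing $h=h_v$ with $v\in V=V_1\cup V_2$ forces $v\notin V_1$ and $v\notin V_2$, a contradiction. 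Hence $L_V$ is not fully residually $\Gamma$.

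For the converse together with the ``moreover'' clause, let $V_0\subseteq V$ be Zariski-dense and fix non-trivial $g_1,\dots,g_n\in L_V\setminus\{1\}$. Each $Z_i:=\{v\in V\mid g_i(v)=1\}$ is a subvariety of $V$, and it is proper because $g_i\neq 1$ in $L_V$ means $g_i$ does not vanish identically on $V$. Using the hypothesis that $V$ is not a union of two — hence, by induction, of finitely many — proper subvarieties, the finite intersection of non-empty open sets $U:=\bigcap_{i=1}^{n}(V\setminus Z_i)$ is non-empty; as $V_0$ is dense it meets $U$, so there is $v\in V_0$ with $g_i(v)\neq 1$, i.e. $h_v(g_i)\neq 1$, for all $i$. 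This says precisely that $\{h_v\mid v\in V_0\}$ is discriminating on $L_V$. Taking $V_0=V$ and recalling that every $\Gamma$-homomorphism $L_V\to\Gamma$ is an evaluation map, we get that $\Hom_\Gamma(L_V,\Gamma)$ is discriminating, i.e. $L_V$ is fully residually $\Gamma$.

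I do not anticipate a serious obstacle: the argument is essentially a matter of translating between the topological notion (irreducibility of $V$) and the algebraic one (discriminating families of evaluation homomorphisms) via the dictionary set up earlier, plus the elementary topological fact that a finite union of proper closed subsets of an irreducible space is proper. The only points to handle with a little care are that ``proper subvariety'' should be read as ``proper Zariski-closed subset'' (legitimate, since varieties are exactly the closed sets), and that the finiteness of the collection $g_1,\dots,g_n$ is what makes the open set $U$ non-empty.
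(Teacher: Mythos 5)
Your proof is correct and follows essentially the same route as the paper's: both rest on the dictionary between points of $V$ and evaluation homomorphisms, the fact that every $\Gamma$-homomorphism $L_V\to\Gamma$ is an evaluation (Lemma~\ref{lem_RF}), and the observation that the zero set of a non-trivial word map is a proper subvariety. The paper organizes things slightly differently (proving the density clause first, by contradiction, and then reading off the ``only if'' direction), while you argue the two implications directly, but the underlying ideas are identical.
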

  
 \begin{proof}
 We first prove the second assertion by contradiction: we assume that $V$ is irreducible,
 but that $\{h_v|v\in V_0\}$ is not discriminating.
   Then there exists $f_1,\dots,f_n\in L_V\setminus\{1\}$
    such that for each $v\in V_0$, there exists $i\leq n$ such that $h_v(f_i)=f_i(v)=1$.
    Denote by $V_i= f_i\m(\{1\})$ the subvariety of $V$ 
    defined by $f_i$.
    In other words, $V_0\subset \bigcup_{i} V_i$. 
    Since $V_0$ is dense and the $V_i$ are closed, we get $V=\bigcup_{i} V_i$
    so by irreducibility, $V=V_i$ for some $i\leq n$.
    This means that $f_i$ vanishes on $V$, \ie $f_i=1$ a contradiction.
 
 This proves the second assertion. The direct implication of the first assertion follows.
  
  Conversely, assume that $V=V_1\cup\dots\cup V_n$ is a union of proper subvarieties.
  For each $i\leq n$, consider a word map $f_i\in L_V\setminus\{1\}$ that vanishes on $V_i$.
  Then for any point $v\in V$, $v$ lies in $V_i$ for some $i\leq n$ and the evaluation homomorphism $h_v$ vanishes on $f_i$.
This shows that the collection of evaluation homomorphisms is not discriminating. Since by Lemma \ref{lem_RF} every $\Gamma$-morphism to $\Gamma$ is an evaluation homomorphism, this proves the lemma.
 \end{proof}

\subsection{Direct products}

Let $V_1,V_2$ be two non-empty algebraic varieties, 
and let $V=V_1\times V_2$ be the product variety.
We now describe the group of functions $L_V$ in terms of $L_{V_1}$ and $L_{V_2}$.

For $i\leq 2$, denote by $p_i:V\ra V_i$ the $i$-th projection.
Recall that the map $p_i^*:L_{V_i}\ra L_V$
sends a word map $f:V_i\ra \Gamma$ to $f\circ p_i$.
The two maps $p_1^*, p_2^*$ are injective, they agree on $\Gamma$,
and thus yield a $\Gamma$-homomorphism $\phi:L_{V_1}*_\Gamma L_{V_2}\ra L_V$. 

\begin{lem}\label{lem_direct_product}
    $L_V$ is the maximal residually $\Gamma$ quotient of $L_{V_1}*_\Gamma L_{V_2}$.
    
    More precisely, the map $L_{V_1}*_\Gamma L_{V_2}\ra L_V$ is surjective and its kernel is exactly
    the intersection of all kernels of $\Gamma$-homomorphism $L_{V_1}*_\Gamma L_{V_2}\ra \Gamma$.
\end{lem}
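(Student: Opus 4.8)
The plan is to show that $\phi:L_{V_1}*_\Gamma L_{V_2}\ra L_V$ is onto and then identify its kernel with $K:=\bigcap_{h}\ker h$, the intersection taken over all $\Gamma$-homomorphisms $L_{V_1}*_\Gamma L_{V_2}\ra\Gamma$; since $L_V$ is residually $\Gamma$, this will exhibit it as the largest residually $\Gamma$ quotient. First I would check surjectivity: if $V_1\subset\Gamma^{d_1}$ and $V_2\subset\Gamma^{d_2}$, then $V\subset\Gamma^{d_1+d_2}$ and the coordinate functions $X_1,\dots,X_{d_1+d_2}$ generate $L_V$ over $\Gamma$; but the first $d_1$ of these are $p_1^*$ of the coordinate functions on $V_1$, and the last $d_2$ are $p_2^*$ of the coordinate functions on $V_2$, so they all lie in the image of $\phi$, whence $\phi$ is surjective.

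Next I would pin down the kernel. Since $L_V$ is residually $\Gamma$, every $\Gamma$-homomorphism $L_{V_1}*_\Gamma L_{V_2}\ra\Gamma$ that factors through $\phi$ kills $\ker\phi$, and conversely every $\Gamma$-homomorphism $L_V\ra\Gamma$ pulls back to one on $L_{V_1}*_\Gamma L_{V_2}$; so the inclusion $\ker\phi\subseteq K$ is immediate from residual $\Gamma$-ness of $L_V$ together with surjectivity of $\phi$. The substance is the reverse inclusion $K\subseteq\ker\phi$, equivalently: if $w\in L_{V_1}*_\Gamma L_{V_2}$ is killed by every $\Gamma$-homomorphism to $\Gamma$, then $\phi(w)=1$ in $L_V$, i.e.\ the corresponding word map vanishes on all of $V=V_1\times V_2$. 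Here I would use the key observation that the $\Gamma$-homomorphisms $L_{V_1}*_\Gamma L_{V_2}\ra\Gamma$ are precisely the pairs $(h_{v_1},h_{v_2})$ with $v_1\in V_1$, $v_2\in V_2$: by the universal property of the amalgamated free product, a $\Gamma$-homomorphism out of $L_{V_1}*_\Gamma L_{V_2}$ is the same as a pair of $\Gamma$-homomorphisms $L_{V_1}\ra\Gamma$ and $L_{V_2}\ra\Gamma$ agreeing on $\Gamma$ (the agreement is automatic since both restrict to the identity), and by Lemma \ref{lem_RF} (applied via Corollary \ref{cor_maxRF}) these are exactly the evaluation maps $h_{v_1}$ and $h_{v_2}$. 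Moreover, for a point $v=(v_1,v_2)\in V$, the evaluation map $h_v:L_V\ra\Gamma$ precomposed with $\phi$ is exactly the $\Gamma$-homomorphism $L_{V_1}*_\Gamma L_{V_2}\ra\Gamma$ induced by the pair $(h_{v_1},h_{v_2})$, because on the $i$-th coordinate functions both sides return the $i$-th coordinate of $v_1$ (resp.\ $v_2$).

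Putting this together: let $w\in K$. For any $v=(v_1,v_2)\in V$, the pair $(h_{v_1},h_{v_2})$ gives a $\Gamma$-homomorphism $L_{V_1}*_\Gamma L_{V_2}\ra\Gamma$, which therefore kills $w$; but that homomorphism equals $h_v\circ\phi$, so $h_v(\phi(w))=0$, i.e.\ the word map $\phi(w)$ vanishes at every point $v\in V$. Hence $\phi(w)=1$ in $L_V$, so $w\in\ker\phi$. This gives $K\subseteq\ker\phi$, completing the identification $\ker\phi=K$ and the proof. I expect the main (mild) obstacle to be bookkeeping: carefully matching up the universal property of $*_\Gamma$ with the identification $V_i\leftrightarrow\Hom_\Gamma(L_{V_i},\Gamma)$ and checking that $h_{(v_1,v_2)}\circ\phi$ is literally the homomorphism induced by $(h_{v_1},h_{v_2})$ on coordinate functions — everything else is formal once that diagram commutes.
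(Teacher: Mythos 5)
Your approach is essentially the same as the paper's: both rest, via Lemma~\ref{lem_RF}/Corollary~\ref{cor_maxRF}, on the observation that $\Gamma$-homomorphisms $L_{V_1}*_\Gamma L_{V_2}\to\Gamma$ correspond exactly to points of $V_1\times V_2$, i.e.\ are all of the form $h_{(v_1,v_2)}\circ\phi$. The paper packages this through the presentation groups $L_{\Sigma_i}$, while you invoke the universal property of the amalgam directly; the mathematical content coincides.

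There is, however, one point to clean up: you have the two inclusions labeled the wrong way round, and the justification you offer for the one you call \emph{immediate} does not actually work as stated. Residual $\Gamma$-ness of $L_V$ plus surjectivity of $\phi$ immediately yields $K\subseteq\ker\phi$ (if $w\in K$ and $\phi(w)\neq 1$, then some $g\in\Hom_\Gamma(L_V,\Gamma)$ does not kill $\phi(w)$, and $g\circ\phi$ is a $\Gamma$-homomorphism that does not kill $w$, a contradiction). By contrast, $\ker\phi\subseteq K$ is \emph{not} a formal consequence of residual $\Gamma$-ness and surjectivity --- for a general surjection $G\onto H$ onto a residually $\Gamma$ group there can be $\Gamma$-homomorphisms $G\to\Gamma$ not factoring through $H$. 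That direction requires precisely the key observation you later establish: that every $\Gamma$-homomorphism out of $L_{V_1}*_\Gamma L_{V_2}$ factors through $\phi$ as $h_v\circ\phi$. Once you have that, $\ker\phi\subseteq K$ is trivial. So the ingredients in your proposal are all correct and suffice; you just need to reattach each inclusion to the right argument, since as written the ``immediate'' step is a gap that your own ``substantive'' observation is needed to close.
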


\begin{proof}
Let $\Sigma_1\subset \Gamma*\grp{x_1,\dots,x_{d}}$ (resp.\ $\Sigma_2\subset\Gamma*\grp{y_1,\dots,y_{d'}}$) 
be a system of equations defining $V_1$ (resp. $V_2$).
By Lemma \ref{lem_RF}, $L_{V_1}$ is the maximal residually-$\Gamma$ quotient of the group $$L_{\Sigma_1}=\grp{\Gamma,x_1,\dots,x_d|\Sigma_1}$$
(\ie its quotient by the intersection $K_1$ of the kernels 
of all $\Gamma$-homomorphisms  $L_{\Sigma_1}\to\Gamma$), and similarly for $L_{V_2}$.
Since $\Sigma_1\cup\Sigma_2\subset\Gamma*\grp{x_1,\dots,x_d,y_1,\dots,y_{d'}}$ 
is a system of equations defining $V$,
$L_V$ is the maximal residually $\Gamma$ quotient of 
$$L_{\Sigma_1}*_\Gamma L_{\Sigma_2}=\grp{\Gamma,x_1,\dots,x_d,y_1,\dots,y_{d'}|\Sigma_1\cup \Sigma_2}.$$
Clearly $K_1$ and $K_2$ die in $L_V$ so 
$L_V$ is also the maximal residually $\Gamma$ quotient of $L_{V_1}*_\Gamma L_{V_2}$.

\end{proof}

\begin{rem}
One can prove that if $V_1$ and $V_2$ are irreducible, then so is $V_1\times V_2$.
Indeed, the proof over fields given for instance in \cite[Exercise I.3.15]{Hartshorne} adapts directly.
\end{rem}

\begin{example}
\label{rem_example_prod}
The description given by the lemma is not very explicit. 
We will need a more precise description only in simple situations such as the following.
Start with $V_1=V_2=Z(c)$ where $c$ is not a proper power.
Then for $i\in\{1,2\}$, 
$$L_{V_i}=\Gamma*_{\grp{c}} (\grp{c}\oplus \grp{t_i})\simeq \Gamma*_{\grp{c}} \bbZ^2 $$ 
and $$L_{V_1\times V_2}=\Gamma*_{\grp{c}} (\grp{c}\oplus \grp{t_1}\oplus\grp{t_2}).$$ 
The map $L_{V_1}*_\Gamma L_{V_2}\ra L_{V_1\times V_2}$
is not injective: its kernel contains for example the commutator $[t_1,t_2]$.
\end{example}

\section{$\Gamma$-limit groups} \label{sec_gamma_limit}

\subsection{Description of the JSJ decomposition of $\Gamma$-limit groups}

In this section we follow the terminology of \cite{GL_JSJ}. 

Let $L$ be a $\Gamma$-limit group.
Given two collections $\cala,\calh$ of subgroups of $L$,
an $(\cala,\calh)$-tree is a tree with a minimal action of $L$
such that each edge stabilizer lies in $\cala$, and each group in $\calh$ is elliptic.
A subgroup $H\subset L$ is $(\cala,\calh)$-universally elliptic if it is elliptic in every $(\cala,\calh)$-tree.
An $(\cala,\calh)$-JSJ decomposition is an $(\cala,\calh)$-tree whose edge groups are 
$(\cala,\calh)$-universally ellipic, and maximal for domination (given two $(\cala,\calh)$-trees 
$T$, $T'$, we say that $T$ dominates $T'$ if vertex stabilizers of $T$ are elliptic in $T'$).

In what follows, we will take for $\cala$ the collection of abelian subgroups of $L$. 
Denoting by $\cala_{nc}\subset\cala$ the subcollection of all non-cyclic abelian groups,
we will take $\calh=\{\Gamma\}\cup\cala_{nc}$. Thus $(\cala,\calh)$-trees
are abelian splittings of $L$ relative to $\Gamma$ and to non-cyclic abelian groups.

If $L$ is freely indecomposable relative to $\Gamma$,
it has a canonical JSJ tree over abelian groups relative to $\Gamma$ and 
to non-cyclic abelian groups.
Indeed $L$ is torsion-free and CSA (because $\Gamma$ is) thus Theorem 9.5 of \cite{GL_JSJ} implies that the JSJ deformation space of $L$ relative to $\Gamma$ and to non-cyclic abelian subgroups exists. 
Cylinders of an $(\cala,\calh)$-tree $T$ are defined
as the union of edges whose stabilizers are contained in a common
maximal abelian subgroup.
The tree of cylinders of $T$ is defined by replacing
each cylinder of $T$ by the cone on its boundary vertices 
(see \cite[Section 7, example (1)]{GL_JSJ}).

The tree of cylinders of any tree $T$ in the JSJ deformation space of $L$ relative to $\Gamma$ is itself a JSJ tree that does not depend on $T$, and is invariant by $\Aut_{\Gamma}(L)$.
We denote by $\Lambda$ the corresponding graph of groups.
We call the decomposition $L=\pi_1(\Lambda)$ the \emph{canonical JSJ decomposition} of $L$. 
We will denote by $v_{\Gamma}$ the image in $\Lambda$ of the unique vertex $\tilde v_\Gamma$ of the JSJ tree
fixed by $\Gamma$. We denote by $G_{v_\Gamma}=G_{\tilde v_\Gamma}$ the corresponding vertex group (which contains $\Gamma$).

If we drop the assumption of free indecomposability,
we choose $L=L_0 * \ldots * L_r * \F_s$ a Grushko decomposition relative to $\Gamma$ 
 with $\Gamma \subset L_0$. 

As mentioned above, we can consider the canonical JSJ-decomposition $L_0=\pi_1(\Lambda_0)$ relative to $\Gamma$ and non-cyclic abelian groups. 
For each $i\geq 1$, $L_i$ has no preferred copy of $\Gamma$ (it contains no conjugate of $\Gamma$).
There is an $\Aut(L_i)$-invariant JSJ-decomposition $L_i=\pi_1(\Lambda_i)$ relative to non-cyclic abelian groups.

The following definition will be convenient.
\begin{dfn}[Standard JSJ decomposition] 
Gluing a wedge of $s$ edges with trivial edge group on $v_\Gamma$, and connecting $v_\Gamma$ to each graph of groups $\Lambda_i$ ($i>0$) using edges with trivial edge group (see Figure \ref{fig:jsj_std}), 
one obtains a splitting of $L$.
We call such a splitting a \emph{standard JSJ decomposition} of $L$.
\end{dfn} 

\begin{figure}
    \centering
    \includegraphics[width=.8\linewidth]{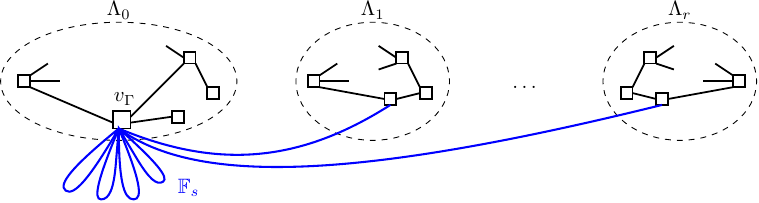}
    \caption{A typical standard JSJ decomposition. Blue edges have trivial edge group, black edges have non-trivial abelian edge group.}
    \label{fig:jsj_std}
\end{figure}

A standard JSJ decomposition of $L$ is indeed a JSJ decomposition of $L$ over abelian groups relative to $\Gamma$ and to non-cyclic abelian groups (but it is not invariant under 
$\Aut_\Gamma(L)$ in general).

Trees of cylinders can be characterized in the following way:
\begin{lem}\label{lem_cylindres}
    Let $G=\pi_1(\Lambda)$ be an abelian splitting of a CSA group $G$ without trivial edge groups.
    
    The tree corresponding to $\Lambda$ is its own tree of cylinders if and only if 
     it is bipartite between vertices with abelian and non-abelian stabilizer,
    and for every vertex $v$ with non-abelian stabilizer, 
    \begin{itemize}
        \item for every edge $e$ incident on $v$, the group $G_e$ is maximal abelian in $G_v$;
        \item for every pair of edges $e\neq e'$ incident on $v$,
    $G_e$ is not $G_v$-conjugate to $G_{e'}$.
    \end{itemize}
\end{lem}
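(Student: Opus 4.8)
The statement is a characterization of when an abelian splitting equals its own tree of cylinders, so the natural route is to unwind the definition of the tree of cylinders for the commutation relation on abelian subgroups and compare. Recall that in a CSA group, two nontrivial abelian subgroups commute iff they lie in a common maximal abelian subgroup, and this ``commutation'' relation on the set of (nontrivial) edge stabilizers of $\Lambda$ is an admissible equivalence relation in the sense of Guirardel--Levitt, so the tree of cylinders $T_c$ is defined. The plan is to prove both implications by tracking how cylinders are built: a cylinder is a maximal subtree all of whose edges have pairwise-commuting stabilizers, vertices of $T_c$ are of two kinds (cylinders $Y$, and vertices $v$ of $T$ lying in at least two cylinders), and there is an edge in $T_c$ between $v$ and $Y$ precisely when $v\in Y$.

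\textbf{Forward direction.} Assume $T$ (the Bass--Serre tree of $\Lambda$) is its own tree of cylinders, i.e.\ $T\cong T_c$ canonically. First I would argue bipartiteness: the vertices of $T_c$ of ``cylinder type'' have abelian stabilizer (the stabilizer of a cylinder is the maximal abelian subgroup containing all the commuting edge groups it meets, using CSA), while the vertices of ``$v$-type'' have stabilizer $G_v$; and $T_c$ is bipartite between these two types by construction. Since $T\cong T_c$, the vertices of $T$ split into those with abelian stabilizer and those with non-abelian stabilizer, and $T$ is bipartite between them --- here one uses that a vertex of non-abelian stabilizer cannot be of cylinder type, so it is forced to be of $v$-type and hence its neighbors are cylinders, which have abelian stabilizer. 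Next, the two bullet conditions at a non-abelian vertex $v$: if some edge group $G_e$ at $v$ were not maximal abelian in $G_v$, or if two edges $e\neq e'$ at $v$ had $G_v$-conjugate stabilizers, then in forming $T_c$ the vertex $v$ would either get absorbed into a cylinder (first case: $v$ lies in only one cylinder, so it is not a vertex of $T_c$ at all, contradicting $T\cong T_c$) or two edges at $v$ would be identified / the local structure at $v$ would change (second case: $e,e'$ lie in the same cylinder, again changing the tree). So both conditions are necessary.

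\textbf{Converse.} Assume $\Lambda$ is bipartite between abelian and non-abelian vertices and the two bullet conditions hold; I want to build an explicit $G$-equivariant isomorphism $T\to T_c$. The key observation is that under these hypotheses, \emph{every cylinder of $T$ is the star of a single abelian-type vertex}: indeed, take an abelian vertex $a$; all edges at $a$ have commuting stabilizers (they all lie in the abelian group $G_a$), so $\mathrm{star}(a)$ is contained in a cylinder; conversely an edge $e$ at a non-abelian vertex $v$ cannot be ``continued'' past $v$ within a cylinder, because any other edge $e'$ at $v$ with $G_{e'}$ commuting with $G_e$ would, by maximality of $G_e$ in $G_v$ and the CSA property, force $G_{e'}$ and $G_e$ into the same maximal abelian subgroup of $G_v$, hence (since both are maximal abelian in $G_v$) $G_{e'}=G_e$, contradicting the second bullet. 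So cylinders $\leftrightarrow$ abelian vertices, and a non-abelian vertex $v$ lies in exactly the cylinders corresponding to its abelian neighbors --- at least two of them when $v$ has valence $\geq 2$ (valence-one vertices need a brief separate remark, or one invokes minimality / that such configurations don't arise in a reduced JSJ, or simply notes a valence-one non-abelian vertex still lies in its unique cylinder's boundary appropriately). Then the map sending an abelian vertex to its cylinder, a non-abelian vertex to itself (as a $T_c$-vertex), and an edge $(a,v)$ of $T$ to the corresponding edge of $T_c$ is a well-defined $G$-equivariant bijection on vertices and edges respecting incidence, hence the desired isomorphism $T\cong T_c$.

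\textbf{Main obstacle.} The delicate point is the converse direction, specifically verifying that distinct abelian vertices give distinct cylinders and that no cylinder is ``larger'' than a single star --- this is exactly where both bullet hypotheses and the CSA property are used together, and where one must be careful about valence-one non-abelian vertices and about the (im)possibility of an edge path $a - v - a'$ lying in one cylinder. I expect the bookkeeping of cylinder stabilizers (showing $\mathrm{Stab}(Y)$ is abelian, via CSA, so that the identification $T\cong T_c$ is stabilizer-preserving and not merely combinatorial) to be the most technical part, but it is routine given the characterization of commuting abelian subgroups in CSA groups.
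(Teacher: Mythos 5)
Your proof follows essentially the same approach as the paper's (which is quite terse: it observes that cylinders have maximal abelian stabilizers for the forward direction, and that the given conditions force cylinders to be exactly stars of abelian vertices for the converse), but fills in the details. The argument is correct in substance. The one concern you flag — valence-one non-abelian vertices in the converse direction — has a cleaner resolution than the options you list, and does not require minimality or JSJ-specific input: since every edge group is nontrivial abelian and $G_v$ is non-abelian, $G_e \subsetneq G_v$, so $[G_v : G_e] \ge 2$. Hence every lift $\tilde v$ of a non-abelian vertex of $\Lambda$ to the Bass--Serre tree $T$ has valence at least $2$, and (by your argument that no two edges of $T$ at $\tilde v$ lie in a common cylinder) $\tilde v$ meets at least two distinct cylinders, so it survives as a vertex of $T_c$. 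This closes the only real gap. One small clarification worth making explicit: the conditions of the lemma should be read in the quotient graph of groups $\Lambda$ (equivalently, for distinct $G_v$-orbits of edges of $T$ at $\tilde v$), otherwise the second bullet is vacuously violated whenever $[G_v : G_e] > 1$; your converse argument implicitly works at this level anyway, since it shows $T_c \cong T$ as $G$-trees, and the forward direction is cleanest phrased as the paper does it: any tree of cylinders $T_c$ (for the commutation relation in a CSA group) has abelian cylinder-stabilizers by CSA malnormality, and at a non-cylinder vertex the incident edge groups $G_v \cap G_Y$ are maximal abelian in $G_v$ with pairwise non-conjugate $G_v$-orbit representatives, again by CSA.
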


\begin{proof} In this context, cylinders have maximal abelian stabilizers, thus it is easy to see that a tree of cylinder will satisfy the conclusions of the lemma.

For the other direction, note that the given conditions imply that cylinders are exactly the set of edges adjacent to a common abelian vertex. Applying the tree of cylinders construction thus yields an isomorphic tree.
\end{proof}

We gather some properties of the standard JSJ decomposition which we will use.

A vertex $v$ in an abelian splitting $L=\pi_1(\Lambda)$ is \emph{rigid} if $G_v$ is elliptic in every abelian splitting of $L$ relative to $\Gamma$ and to non-cyclic abelian groups. It is \emph{flexible} otherwise.

The vertex $v$ is \emph{of abelian type} if its vertex group $G_v$ is abelian.
It is \emph{of surface type} (see QH with trivial fiber in \cite{Sela_structure,GL_JSJ}) if $G_v$
can be written as the fundamental group of a surface $\Sigma$ so that each incident edge group
is conjugate to a subgroup of the fundamental group of a boundary component of $\Sigma$
(orbifolds are allowed in \cite{GL_JSJ}, but they don't occur here because the groups we consider are torsion-free).

\begin{lem} \label{lem_prop_std_JSJ} Let $L$ be a $\Gamma$-group which is also a $\Gamma$-limit group.
A standard JSJ decomposition $\Lambda$ for $L$ satisfies the following
\begin{enumerate}
    \item \label{it_vertex_types} non-abelian flexible vertex groups are of surface type;
    \item \label{it_UE_in_QH} if $H \subset L$ is $(\cala,\calh)$-universally elliptic 
    and is contained in a surface type vertex group of $\Lambda$, it is contained in a boundary component;
    \item \label{it_base_vertex} the vertex group containing $\Gamma$ is rigid;
    \item \label{it_bip} any edge $e$ with nontrivial $G_e$ joins a vertex with abelian group to a vertex with non abelian group;
    \item \label{it_maxab_edge} for any non-abelian vertex $v$, and incident edge $e$, if $G_e$ is nontrivial it is maximal abelian (and thus malnormal) in $G_v$;
    \item \label{it_malnormal_ab} for any non-abelian vertex $v$, and every pair of edges $e\neq e'$  with non-trivial stabilizer incident on $v$, $G_e$ is not $G_v$-conjugate to $G_{e'}$.
\end{enumerate}
\end{lem}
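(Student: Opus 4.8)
The plan is to verify each of the six properties by leveraging the explicit construction of the standard JSJ decomposition together with the general theory of JSJ trees over abelian groups. The key observation is that a standard JSJ decomposition $\Lambda$ is built from the $\Aut_\Gamma(L_0)$-invariant canonical JSJ (tree of cylinders) of the freely indecomposable factor $L_0$, the $\Aut(L_i)$-invariant JSJ decompositions $\Lambda_i$ of the other factors, a rose of $s$ loops, and connecting edges, all glued at $v_\Gamma$ with trivial edge groups. So properties about edges with \emph{non-trivial} stabilizer reduce to statements inside one of the $\Lambda_i$'s (which are themselves trees of cylinders for CSA groups), while properties about $v_\Gamma$ and flexible vertices reduce to the corresponding facts for the canonical JSJ of $L_0$.

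First I would prove (\ref{it_vertex_types}): a non-abelian flexible vertex group $G_v$ lies in some factor $L_i$. Since $L$ (hence $L_i$) is a $\Gamma$-limit group, it is torsion-free and CSA, so by the JSJ theory of such groups (Theorem 9.5 of \cite{GL_JSJ} and the classification of flexible vertices of JSJ decompositions over abelian subgroups), the flexible vertices that are not of abelian type are QH with trivial fiber, i.e.\ of surface type — here one uses that there is no torsion so no orbifold points appear. Property (\ref{it_UE_in_QH}) is the standard fact that a subgroup of a QH (surface-type) vertex group which is elliptic in every splitting compatible with the JSJ must be peripheral: if it were not contained in a boundary subgroup, the curve/arc system on the surface would provide a refining abelian splitting in which it is not elliptic, contradicting universal ellipticity relative to $\Gamma$ and non-cyclic abelians. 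Property (\ref{it_base_vertex}) is exactly the assertion that $v_\Gamma$, the vertex containing $\Gamma$ in the canonical JSJ of $L_0$, is rigid: by construction of the standard decomposition the loops and the connecting edges to the $\Lambda_i$ carry trivial edge groups and do not affect ellipticity of $G_{v_\Gamma}$, while in $L_0$ the vertex $\tilde v_\Gamma$ is the unique vertex fixed by $\Gamma$ and is a non-QH, non-abelian vertex of the tree of cylinders, hence rigid; alternatively, cite that $v_\Gamma$ is universally elliptic because $\Gamma$ is elliptic in every relevant splitting and $G_{v_\Gamma}$ is its ``envelope''.

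For the last three properties I would invoke Lemma \ref{lem_cylindres}. Edges with non-trivial stabilizer all lie in one of the $\Lambda_i$, and each $\Lambda_i$ is (by construction) its own tree of cylinders for the CSA group $L_i$; Lemma \ref{lem_cylindres} then gives directly that the tree is bipartite between abelian and non-abelian vertices (property (\ref{it_bip})), that each edge group incident to a non-abelian vertex is maximal abelian there — hence malnormal in $G_v$ since in a CSA group maximal abelian subgroups are malnormal — (property (\ref{it_maxab_edge})), and that two distinct edges incident to the same non-abelian vertex have non-conjugate edge groups (property (\ref{it_malnormal_ab})). One has to be slightly careful that the trivial-edge-group edges added in the standard construction are attached precisely at $v_\Gamma$, which sits on the abelian-vs-non-abelian bipartition on the non-abelian side (it is a rigid non-abelian vertex), so adding them does not violate bipartiteness; and that the rose loops and connecting edges are excluded from (\ref{it_bip})--(\ref{it_malnormal_ab}) since those statements only concern edges with non-trivial stabilizer.

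The main obstacle I expect is (\ref{it_vertex_types}): pinning down that every non-abelian flexible vertex of the canonical JSJ of a $\Gamma$-limit group is of \emph{surface} type (with trivial fiber) requires the full structural classification of flexible vertices in abelian JSJ decompositions of finitely generated CSA / torsion-free groups from \cite{GL_JSJ,Sela_structure}, together with the observation that relativizing to $\Gamma$ and to non-cyclic abelian subgroups does not introduce new flexible types. The remaining items are comparatively routine once Lemma \ref{lem_cylindres} and the tree-of-cylinders characterization of the canonical JSJ are in hand.
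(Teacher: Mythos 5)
Your treatment of items (\ref{it_vertex_types}), (\ref{it_UE_in_QH}), (\ref{it_bip}), (\ref{it_maxab_edge}) and (\ref{it_malnormal_ab}) matches the paper's: Theorem~9.5 of \cite{GL_JSJ} for the classification of non-abelian flexible vertices, Theorem~9.4(3) of \cite{GL_JSJ} (whose content you paraphrase) for the peripherality of universally elliptic subgroups of surface vertices, and Lemma~\ref{lem_cylindres} applied away from the trivial-edge-group edges for the last three items. Those parts are fine.

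The gap is in item (\ref{it_base_vertex}). You assert that $\tilde v_\Gamma$ ``is a non-QH, non-abelian vertex of the tree of cylinders, hence rigid'', but the non-QH claim is exactly what needs to be proved, and neither the construction of the tree of cylinders nor the vague ``envelope'' alternative supplies it: a priori, nothing prevents a vertex group fixed by $\Gamma$ from being of surface type, since the JSJ machinery treats $\Gamma$ only as a subgroup required to be elliptic. The paper closes this gap by \emph{deducing} item~(\ref{it_base_vertex}) from item~(\ref{it_UE_in_QH}): $\Gamma$ lies in $\calh$, hence is universally elliptic, so if $G_{v_\Gamma}$ were of surface type then $\Gamma$ would have to sit in a boundary subgroup, hence be cyclic, contradicting the standing assumption that $\Gamma$ is a \emph{non-elementary} torsion-free hyperbolic group; and $G_{v_\Gamma}$ cannot be abelian since it contains $\Gamma$. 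This is the one place where the hypothesis that $\Gamma$ is non-elementary is used, and your write-up should make that step explicit rather than take rigidity of $v_\Gamma$ for granted.
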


\begin{proof} Item \ref{it_vertex_types}
is given by Theorem 9.5 of \cite{GL_JSJ}.
Item \ref{it_UE_in_QH} appears as point (3) of Theorem 9.4 of \cite{GL_JSJ}, and implies 
item \ref{it_base_vertex} by considering the case where $H=\Gamma$ (recall that we assume that $\Gamma$ is a non-elementary hyperbolic group). 
Items \ref{it_bip}, \ref{it_maxab_edge} and \ref{it_malnormal_ab} follow from Lemma \ref{lem_cylindres}.
\end{proof}

The following result, which follows from Proposition 4.15 of \cite{GL_JSJ} characterizes (maybe not standard) JSJ trees among all $({\cal A}, \calh)$-tree.

\begin{lem} \label{lem_carac_JSJ} 
Let $\Delta$ be a splitting of $L$ corresponding to an $({\cal A}, \Gamma)$-tree.
Suppose the following hold:
\begin{itemize}
    \item edge groups of $\Delta$ are $(\cala,\calh)$-universally elliptic;
    \item a vertex group $G_v$ of $\Delta$ is either of surface type  
    or rigid relative to the incident edge groups $G_e$ (that is, does not admit any non trivial 
    $({\cal A}, \calh_{|G_v}\cup\{ G_e \}_{o(e)=v})$-splitting, 
    where $\calh_{|G_v}$ is the set of conjugates of $H$ contained in $G_v$).
\end{itemize}
Then $\Delta$ is a JSJ decomposition for $L$. 

\end{lem}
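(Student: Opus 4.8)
The plan is to reduce the statement to the general characterization of JSJ trees in \cite{GL_JSJ} (this is essentially Proposition~4.15 there), spelling out the one point where the structure of surface groups intervenes. Recall that a JSJ tree over $\cala$ relative to $\calh$ is, by definition, an $(\cala,\calh)$-tree with $(\cala,\calh)$-universally elliptic edge stabilizers that is maximal for domination among all such trees; equivalently, it is $(\cala,\calh)$-universally elliptic and each of its vertex groups is elliptic in every $(\cala,\calh)$-universally elliptic tree. So, writing $T$ for the Bass--Serre tree of $\Delta$, I would check: (a) $T$ is an $(\cala,\calh)$-tree (a priori we only know it is an $(\cala,\Gamma)$-tree); (b) $T$ is $(\cala,\calh)$-universally elliptic; (c) every vertex group of $T$ is elliptic in every $(\cala,\calh)$-universally elliptic tree $T'$. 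Point (b) is immediate from the hypothesis on the edge groups. For (a), I would observe that a non-cyclic abelian subgroup of $L$ cannot be hyperbolic in $T$: it cannot lie in a surface-type vertex group (surface groups are torsion-free with only cyclic abelian subgroups), and it is elliptic in the splitting induced on any rigid vertex group by the rigidity hypothesis, so a short argument with its action on $T$ gives ellipticity.

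For (c), I would fix an $(\cala,\calh)$-universally elliptic tree $T'$ and a vertex $v$ of $T$, restrict the $L$-action on $T'$ to $G_v$, and pass to the minimal subtree; this produces a splitting $S_v$ of $G_v$ over abelian groups. Every incident edge group $G_e$ is universally elliptic in $L$, hence elliptic in $T'$ and so in $S_v$, and likewise every conjugate of an element of $\calh$ contained in $G_v$ is elliptic in $S_v$; thus $S_v$ is an $(\cala,\calh_{|G_v}\cup\{G_e\}_{o(e)=v})$-splitting of $G_v$. If $v$ is rigid relative to the incident edge groups, $S_v$ is trivial by hypothesis, so $G_v$ is elliptic in $T'$. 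If $v$ is of surface type, $S_v$ is an abelian splitting of a surface group $\pi_1(\Sigma)$ relative to its boundary subgroups (and to copies of $\calh$ inside it, which are non-abelian), hence dual to a family of disjoint essential simple closed curves on $\Sigma$, or trivial; but every edge group of $S_v$ is contained in an edge stabilizer of $T'$, hence universally elliptic in $L$, and a universally elliptic subgroup contained in a surface-type vertex group of $\Delta$ must be carried by a boundary component, so the curve system is boundary-parallel and $S_v$ is again trivial. Either way $G_v$ is elliptic in $T'$, so $T$ dominates every $(\cala,\calh)$-universally elliptic tree, which is what we want.

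The hard part will be the claim just used in the surface case: that a subgroup of $L$ which is $(\cala,\calh)$-universally elliptic and contained in a surface-type vertex group $G_v=\pi_1(\Sigma)$ of $\Delta$ is conjugate into a boundary subgroup. One cannot invoke item~\ref{it_UE_in_QH} of Lemma~\ref{lem_prop_std_JSJ} here, since that concerns the canonical JSJ, which is precisely what we are trying to recognize. The intrinsic argument is that any essential, non-boundary-parallel simple closed curve $c$ on $\Sigma$ is crossed essentially by some simple closed curve $c'$ on $\Sigma$, and the splittings of $\pi_1(\Sigma)$ dual to $c$ and to $c'$ each refine $\Delta$ to a genuine $(\cala,\calh)$-splitting of $L$ (relativeness to $\calh$ is preserved because $\pi_1(\Sigma)$ has no non-cyclic abelian subgroup); since $\langle c\rangle$ is elliptic in the first and hyperbolic in the second, it is not universally elliptic in $L$. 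This is exactly the mechanism packaged in Proposition~4.15 of \cite{GL_JSJ}, which, together with Theorem~9.5 there (identifying the non-abelian flexible vertices as surface type), one may alternatively cite directly to conclude that $\Delta$ is a JSJ decomposition of $L$.
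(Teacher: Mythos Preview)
Your proposal is correct and takes essentially the same approach as the paper, which simply records that the lemma ``follows from Proposition 4.15 of \cite{GL_JSJ}'' without further elaboration; you have unpacked that citation into its constituent steps (universal ellipticity of edges, domination via the rigid/surface dichotomy, and the boundary-parallel argument for universally elliptic subgroups of surface vertices). Your observation in (a) that one must also check non-cyclic abelian subgroups are elliptic---since the hypothesis only says $(\cala,\Gamma)$-tree rather than $(\cala,\calh)$-tree---is a genuine subtlety the paper glosses over, though your justification there is the sketchiest part of the write-up.
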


\section{Basic facts about algebraic groups} \label{sec_basic_alg_groups}

\subsection{Algebraic groups over $\Gamma$.} 
Note that if $V\subset \Gamma^d$ is a variety, then $V\times V$ is a variety in $\Gamma^{2d}$.

\begin{dfn}
An algebraic group $(V,\mu)$ over $\Gamma$ is an algebraic variety $V$
together with a group law $\mu:V\times V\ra V$ which is an algebraic map.
\end{dfn}
Note that we could have required in addition that the inverse map $V\ra V$, defined by $g\mapsto g\m$, be an algebraic map. In fact, when $\Gamma$ is torsion-free hyperbolic and $V$ is irreducible, our classification result shows that the inverse map is automatically algebraic.

We denote the group law on $V$ by $\fois$, the inverse of an element $v\in V$ by $\inv v$, and the identity element by $e$. For any $v \in V$, left and right multiplication by $v$ give algebraic automorphisms $\lambda_v:V\ra V, x\mapsto v\fois x$ and $\rho_v:V\ra V, x\mapsto x\fois v$.  

For any $v,w \in V$ we have by associativity $\lambda_v \circ \lambda_w = \lambda_{v \fois w}$ but $\rho_v \circ \rho_w= \rho_{w \fois v}$. 
We also note that 
$\rho_{\inv v}=\rho_v\m$ and
$\lambda_{\inv v}=\lambda^{-1}_{v}$.

This induces automorphisms 
$$\left\{\begin{array}{rcl}
 \rho_{v}^*:L_V&\ra& L_V \\
 f&\mapsto& f\circ \rho_v
\end{array}\right.
\text{\quad and \quad}
\left\{\begin{array}{rcl}
 \lambda_{v}^*:L_V&\ra& L_V \\
 f&\mapsto& f\circ \lambda_v
\end{array}\right. .
$$
and therefore gives two representations of the algebraic group $V$ into $\Aut_\Gamma(L_V)$ by
$$\left\{\begin{array}{rcl}V&\ra&\Aut_\Gamma(L_V)\\ v &\mapsto& \rho^*_{v}\end{array}\right.
\text{\quad and \quad}
\left\{\begin{array}{rcl}V&\ra& \Aut_\Gamma(L_V)\\ v &\mapsto& (\lambda^*_{v})^{-1}\end{array}\right.$$

\subsection{The group law formula}\label{sec_group_law}

Let $V$ be an algebraic group. Any point $v\in V$ corresponds to a $\Gamma$-homomorphism $h_v\in \HOM$.
We wish to understand the group law when points of $V$ are interpreted as elements of $\HOM$.
We denote by $e$ the identity element of $V$ and by $h_e\in \HOM$ the corresponding $\Gamma$-homomorphism.

\begin{lem}\label{lem_hlambda}
For all $v,v'\in V$, one has
$$h_{v\fois v'}=h_v\circ \rho_{v'}^*.$$

In particular, for all $v\in V$, one has $h_v=h_e\circ \rho_v^*$.

\end{lem}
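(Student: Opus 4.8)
The plan is to unwind the definitions of the various maps involved and use the functoriality of the correspondence $V \leftrightarrow \HOM$ together with the relation $\rho_{v'}^* = (\rho_{v'})^*$ coming from the algebraic automorphism $\rho_{v'}: V \to V$. First I would recall that for a point $v \in V$, the evaluation homomorphism $h_v \in \HOM$ is characterized by $h_v(f) = f(v)$ for every word map $f \in L_V$. Then, for the algebraic automorphism $\rho_{v'}: x \mapsto x \fois v'$, the induced $\Gamma$-automorphism $\rho_{v'}^*$ of $L_V$ sends a word map $f$ to $f \circ \rho_{v'}$, as stated just before the lemma.

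The key computation is then: for any $f \in L_V$,
$$(h_v \circ \rho_{v'}^*)(f) = h_v(\rho_{v'}^*(f)) = h_v(f \circ \rho_{v'}) = (f \circ \rho_{v'})(v) = f(\rho_{v'}(v)) = f(v \fois v') = h_{v \fois v'}(f).$$
Since this holds for all $f \in L_V$, we get $h_{v \fois v'} = h_v \circ \rho_{v'}^*$, which is the first assertion. The ``in particular'' clause follows immediately by specializing $v$ to the identity element $e$ of $V$: since $e \fois v = v$ for all $v$, one has $h_v = h_{e \fois v} = h_e \circ \rho_v^*$.

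I do not expect any genuine obstacle here: the statement is essentially a tautology unpacking the contravariant functoriality of $V \mapsto L_V$ applied to the algebraic map $\rho_{v'}$. The only point requiring a small amount of care is making sure the notation $\rho_{v'}^*$ is being used consistently — namely that $\rho_{v'}^* = (\rho_{v'})^*$ in the sense of the functor $F \mapsto F^*$ defined earlier, so that $\rho_{v'}^*(f) = f \circ \rho_{v'}$ — but this is exactly how it was defined in the preceding subsection, so there is nothing to prove. The whole argument is a two-line chain of equalities plus the specialization.
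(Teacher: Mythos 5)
Your proof is correct and is the same chain of equalities as the paper's: both unwind $h_v \circ \rho_{v'}^*(f) = f(v \fois v') = h_{v\fois v'}(f)$ and conclude. The ``in particular'' clause is handled the same way, by setting $v = e$.
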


\begin{proof}
For any $f\in L_V$ (viewed as a function on $V$), 
$h_{v}\circ \rho_{v'}^*(f)=h_v(f \circ \rho_{v'})=f \circ \rho_{v'} (v)=f(v\fois v')=h_{v\fois v'}(f)$.
\end{proof}

As an immediate consequence of the two equalities in Lemma \ref{lem_hlambda} 
we get
\begin{cor}\label{cor_magique} 
For all $v,v'\in V$ we have
$$h_{v\fois v'}=
h_v\circ\rho_{v'}^*
=h_e\circ\rho_{v}^*\circ\rho_{v'}^*$$
\end{cor}

The following is a straightforward but useful consequence of the transitivity of the action of $G$ on itself
by right multiplication.
\begin{lem} \label{lem_discriminating_rho}
    If $V$ is an algebraic group, then for all $h,h'\in \HOM$, there exists $v\in V$ such that
    $h'=h\circ\rho_v^*$.
    
    In particular, if $V$ is an irreducible variety which is also 
    an algebraic group, then for any $h\in \HOM$ the family of $\Gamma$-homomorphisms $\{h\circ\rho_v^* \mid v\in V\}$ is discriminating.
\end{lem}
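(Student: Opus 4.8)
The plan is to deduce both assertions from Lemma~\ref{lem_hlambda} together with Lemma~\ref{lem_RF} (or rather Corollary~\ref{cor_maxRF}), which tells us that \emph{every} element of $\HOM$ is an evaluation homomorphism $h_v$ for some $v\in V$. So let $h,h'\in \HOM$ be arbitrary. By the identification $V\leftrightarrow \HOM$, there are points $v,v'\in V$ with $h=h_v$ and $h'=h_{v'}$. The group $V$ acts transitively on itself by right multiplication: setting $w=\inv v\fois v'$, we have $v\fois w=v'$. Now Lemma~\ref{lem_hlambda} gives $h_{v\fois w}=h_v\circ\rho_w^*$, that is, $h_{v'}=h_v\circ\rho_w^*$, i.e. $h'=h\circ\rho_w^*$. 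This proves the first assertion, with no hypothesis beyond $V$ being an algebraic group. (Note this uses that $V$ is a group, hence that $\inv v$ exists as an element of $V$ — even though we have not assumed the inversion map is algebraic, inversion is still a well-defined set map on the group $V$.)

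For the second assertion, fix $h\in\HOM$ and suppose $V$ is connected, hence irreducible. I want to show $\calh := \{h\circ\rho_v^*\mid v\in V\}$ is discriminating on $L_V$. By the first assertion, $\calh$ is \emph{all} of $\HOM$ (since every $h'\in\HOM$ has the form $h\circ\rho_v^*$). So it suffices to know that the full family $\HOM$ is discriminating on $L_V$, which is exactly the content of Lemma~\ref{lem_discriminating_V0}: $V$ irreducible implies $L_V$ is fully residually $\Gamma$, i.e. $\HOM$ is discriminating. Alternatively, and perhaps more cleanly for the exposition, one can invoke Lemma~\ref{lem_discriminating_V0} directly with $V_0=V$ (which is certainly dense in itself), giving that $\{h_v\mid v\in V\}=\HOM$ is discriminating; then rewrite this family as $\{h\circ\rho_v^*\mid v\in V\}$ using the first assertion.

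There is essentially no obstacle here; the statement is a packaging of the group-law formula (Lemma~\ref{lem_hlambda}) and the irreducibility criterion (Lemma~\ref{lem_discriminating_V0}). The only point requiring a moment's care is the logical direction: the first assertion says the orbit map $v\mapsto h\circ\rho_v^*$ is \emph{surjective} onto $\HOM$, and then discriminaccy of $\HOM$ (from irreducibility) transfers to this parametrized family for free. I would write the proof in roughly five lines: first the transitivity computation using $w=\inv v\fois v'$ and Lemma~\ref{lem_hlambda}, then the one-line deduction that $\{h\circ\rho_v^*\mid v\in V\}=\HOM$, then cite Lemma~\ref{lem_discriminating_V0}.
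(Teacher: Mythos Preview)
Your proof is correct and follows essentially the same approach as the paper: translate the transitivity of the right action of $V$ on itself into the first assertion via Lemma~\ref{lem_hlambda} and the identification $V\leftrightarrow\HOM$, then deduce the second assertion from the fact that $\HOM$ itself is discriminating when $V$ is irreducible. The paper's proof is simply a terser version of yours, invoking that $L_V$ is a $\Gamma$-limit group where you cite Lemma~\ref{lem_discriminating_V0}.
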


\begin{proof} 
Using the correspondence between $V$ and $\HOM$, the transitivity of the right action of $V$ on itself
translates into the first assertion.

The second part follows from the fact that the family of all morphisms in $\HOM$ is discriminating
because $V$ is an irreducible variety 
and $L_V$ is a $\Gamma$-limit group. 
\end{proof}

\subsection{Connected components of an algebraic group.} \label{sec_irreducible}
Let $(V, \mu)$ be an algebraic group over $\Gamma$. 
The following easy remarks are completely standard.

Let $V= V_1 \cup \ldots \cup V_s$ be the unique (minimal) decomposition of $V$ as a finite union of irreducible varieties. 
Then the irreducible components are disjoint. 
Indeed, since the action of $V$ on itself is transitive, 
all the points in $V$ are contained in the same number $k$ of irreducible components $V_j$. If $k>1$ we have $V_1=(V_1 \cap V_2) \cup \ldots \cup (V_1 \cap V_s)$, which contradicts the irreducibility of $V_1$. 

It follows that the decomposition of $V$ into irreducible components is also its decomposition into connected components
for the Zariski topology. In particular, an algebraic group is connected if and only if its underlying variety is irreducible. 

The connected component containing $e$ is a finite index algebraic subgroup of $V$.

\begin{lem}\label{lem_fidx_dense}
Let $V$ be a \conn\ algebraic group, and let $V_0\subset V$ be a finite index (not necessarily algebraic) subgroup of $V$. 
Then $V_0$ is dense in $V$.
\end{lem}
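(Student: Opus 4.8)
The plan is to show that a finite-index subgroup $V_0$ of a connected algebraic group $V$ is Zariski-dense by considering its closure. Let $\bar V_0$ denote the Zariski closure of $V_0$ in $V$; this is a subvariety of $V$. Since $V_0$ has finite index, say $[V:V_0] = n$, we can write $V = \bigcup_{i=1}^{n} g_i \fois V_0$ as a finite union of cosets, and hence $V = \bigcup_{i=1}^n \lambda_{g_i}(V_0)$. The key observation is that each left-translation $\lambda_{g_i}$ is an algebraic automorphism of $V$, hence a homeomorphism for the Zariski topology, so $\lambda_{g_i}(\bar V_0) = \overline{\lambda_{g_i}(V_0)}$. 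Taking closures in the equation $V = \bigcup_i \lambda_{g_i}(V_0)$ gives $V = \bigcup_{i=1}^n \lambda_{g_i}(\bar V_0)$, expressing $V$ as a finite union of subvarieties.

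Now I invoke irreducibility of $V$ (which holds since $V$ is \conn): from $V = \bigcup_{i=1}^n \lambda_{g_i}(\bar V_0)$ we conclude that $V = \lambda_{g_i}(\bar V_0)$ for some $i$, and since $\lambda_{g_i}$ is a bijection this forces $\bar V_0 = V$ (apply $\lambda_{g_i}^{-1} = \lambda_{\inv{g_i}}$). Hence $V_0$ is dense. The only point requiring a little care is that $\bar V_0$ is genuinely a variety (a Zariski-closed set), but that is immediate since the Zariski topology on $\Gamma^d$ restricted to $V$ has the subvarieties of $V$ as its closed sets, and the closure of any set is closed.

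I expect no serious obstacle here: the statement is elementary given the earlier setup. The one thing to be careful about is ensuring that all the translates $\lambda_{g_i}$ are indeed algebraic automorphisms of $V$ — but this was recorded right after the definition of algebraic group, where it is noted that $\lambda_v$ and $\rho_v$ are algebraic automorphisms for every $v \in V$, with $\lambda_{\inv v} = \lambda_v^{-1}$. So the argument is just: translate a finite cover by closed pieces, take closures, and use irreducibility.
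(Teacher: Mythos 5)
Your proof is correct and follows essentially the same route as the paper's (which says: any subvariety containing $V_0$ is covered by finitely many translates of itself, hence equals $V$ by irreducibility). You have merely spelled out the translation-is-a-homeomorphism step and specialized the subvariety to the Zariski closure $\bar V_0$; the substance is identical.
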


\begin{proof}
If $W$ is a subvariety of $V$ containing $V_0$,
then finitely many translates of $W$ cover $V$. Since $V$ is irreducible, $W=V$.
\end{proof}

\section{Homogeneous varieties}\label{sec_gp_functions_homogeneous}
Algebraic groups are in particular homogeneous varieties. 
In this section, we collect a few consequences of homogeneity.

\begin{dfn}
A variety $V$ is \emph{homogeneous} if the group of algebraic automorphisms $V\ra V$ 
acts transitively on $V$.
\end{dfn}

\begin{qst}
What can we say about homogeneous varieties in general?
Can we classify them ?
\end{qst}

\subsection{Homogeneity}

The following is the reformulation of the definition of homogeneity using the correspondence $V \leftrightarrow \Hom_{\Gamma}(L_V, \Gamma)$.
\begin{lem}\label{lem_homogeneity_in_Hom}
$V$ is homogeneous if and only if 
for all $h,h'\in \Hom(L_V,\Gamma)$, there exists $\alpha \in \Aut_\Gamma(L_V)$ with $h\circ\alpha = h'$.\qed
\end{lem}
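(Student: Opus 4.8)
The plan is to translate the definition of homogeneity through the two functorial dictionaries already established in Section \ref{sec_alg_geom_tools}: the bijection $V \leftrightarrow \Hom_\Gamma(L_V,\Gamma)$, $v \mapsto h_v$, and the isomorphism $\Aut(V) \cong \Aut_\Gamma(L_V)$, $F \mapsto F^*$. Since both correspondences are explicit, the proof is essentially a direct unwinding, and the only subtlety to handle carefully is the contravariance of $F \mapsto F^*$ (an automorphism of $V$ sending $v$ to $w$ induces an automorphism of $L_V$, but one must track whether it sends $h_v$ to $h_w$ or the reverse).

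First I would recall that by the correspondence, giving $h, h' \in \Hom_\Gamma(L_V,\Gamma)$ is the same as giving two points $v, v' \in V$ with $h = h_v$, $h' = h_{v'}$, and that the automorphism group $\Aut(V)$ acts on $V$; homogeneity says this action is transitive. So I would fix $v, v'$ and an algebraic automorphism $F : V \to V$ with $F(v') = v$ (note the direction: I want $F$ to carry $v'$ to $v$). Then I would compute, for any $f \in L_V$ viewed as a word map $V \to \Gamma$,
$$
(h_v \circ F^*)(f) = h_v(f \circ F) = (f \circ F)(v) = f(F(v)) = f(v') = h_{v'}(f),
$$
wait — this computes $h_v \circ F^*$; one checks instead that with $F(v)=v'$ we get $h_v \circ F^* = h_{v'}$, i.e.\ taking $\alpha = F^*$ gives $h \circ \alpha = h'$. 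Conversely, given $\alpha \in \Aut_\Gamma(L_V)$ with $h \circ \alpha = h'$, the isomorphism $\Aut(V)\cong\Aut_\Gamma(L_V)$ produces an algebraic automorphism $F$ of $V$ with $F^* = \alpha$, and running the same computation backwards shows $h_{F(v)} = h_v \circ F^* = h_v \circ \alpha = h' = h_{v'}$, so $F(v) = v'$ by injectivity of $v \mapsto h_v$. Hence transitivity of $\Aut(V)$ on $V$ is equivalent to the stated transitivity-type condition on $\Aut_\Gamma(L_V)$ acting on $\Hom_\Gamma(L_V,\Gamma)$ by precomposition.

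The main (minor) obstacle is simply bookkeeping: making sure the direction of the automorphism in $V$ matches the direction of the required identity $h \circ \alpha = h'$ in $\Hom$, which is dictated by the contravariance of $F \mapsto F^*$. Once that is pinned down the argument is a one-line verification in each direction, so the lemma follows immediately and can be stated with \qed, as in the excerpt.
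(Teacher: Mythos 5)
Your proof is correct and is precisely the unwinding the paper has in mind when it states the lemma with an immediate \qed. The key computation $h_v \circ F^* = h_{F(v)}$ is the right one, and once established, the equivalence between transitivity of $\Aut(V)$ on $V$ and transitivity of $\Aut_\Gamma(L_V)$ on $\Hom_\Gamma(L_V,\Gamma)$ by precomposition follows because both $v\mapsto h_v$ and $F\mapsto F^*$ are bijections. Your momentary slip (taking $F(v')=v$ before the display, then realizing $F(v)=v'$ is what's needed) is caught and corrected, and the final direction is right; in a clean write-up you would simply fix $F$ with $F(v)=v'$ from the start.
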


We saw in Lemma \ref{lem_discriminating_V0} that $V$ is irreducible as an algebraic variety if and only if 
the family $\Hom_{\Gamma}(L_V,\Gamma)$ is discriminating. Together with the previous lemma this gives
\begin{cor}\label{cor_discriminating}
    If $V$ is a homogeneous irreducible variety, for every $h\in \HOM$, the family 
    $\{h\circ \alpha \mid \alpha\in \Aut_\Gamma(L_V)\}$ is discriminating.\qed
\end{cor}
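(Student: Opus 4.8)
The plan is to obtain the statement by directly combining the two immediately preceding results, Lemma \ref{lem_discriminating_V0} and Lemma \ref{lem_homogeneity_in_Hom}. First I would fix an arbitrary $h\in\HOM$. Since $h$ is a $\Gamma$-homomorphism and each $\alpha\in\Aut_\Gamma(L_V)$ is a $\Gamma$-automorphism, every composition $h\circ\alpha$ again lies in $\HOM$; conversely, homogeneity of $V$ together with Lemma \ref{lem_homogeneity_in_Hom} shows that for any $h'\in\HOM$ there is $\alpha\in\Aut_\Gamma(L_V)$ with $h'=h\circ\alpha$. Hence the family $\{h\circ\alpha \mid \alpha\in\Aut_\Gamma(L_V)\}$ is exactly the full set $\HOM$.

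It then remains to invoke irreducibility: by Lemma \ref{lem_discriminating_V0}, $V$ irreducible means $L_V$ is fully residually $\Gamma$, i.e.\ $\HOM$ is a discriminating family of $\Gamma$-homomorphisms $L_V\to\Gamma$. Concretely, given a finite subset $F\subset L_V$, there is some $h'\in\HOM$ injective on $F$; writing $h'=h\circ\alpha$ as above exhibits an element of $\{h\circ\alpha \mid \alpha\in\Aut_\Gamma(L_V)\}$ injective on $F$. This is precisely the assertion that $\{h\circ\alpha \mid \alpha\in\Aut_\Gamma(L_V)\}$ is discriminating (and, since $L_V$ is finitely generated hence countable, this family also contains a discriminating sequence in the sense of Remark \ref{rem_discriminating}, should that variant be needed later).

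I do not expect any genuine obstacle here: the only points to verify are the set-theoretic identity $\{h\circ\alpha\} = \HOM$, which follows from homogeneity plus the trivial observation that $\Gamma$-homs compose with $\Gamma$-automorphisms to give $\Gamma$-homs, and the translation of "irreducible" into "discriminating", which is already recorded in Lemma \ref{lem_discriminating_V0}. The statement is thus essentially a bookkeeping consequence of the two lemmas, which is why it can be left with \qed in the text.
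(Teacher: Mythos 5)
Your proof is correct and is precisely the argument the paper intends when it marks this corollary with \qed: the paragraph preceding the corollary already explains that it follows by combining Lemma \ref{lem_homogeneity_in_Hom} (which gives the set-theoretic identity $\{h\circ\alpha \mid \alpha\in\Aut_\Gamma(L_V)\}=\HOM$) with Lemma \ref{lem_discriminating_V0} (which translates irreducibility into $\HOM$ being discriminating). Your write-up just makes that bookkeeping explicit, so there is nothing to add.
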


Note that Lemma \ref{lem_discriminating_rho} gives a more precise version of this result in the case where $V$ is an algebraic group.  

The following lemma says in particular that Corollary \ref{cor_discriminating} still holds if one replaces 
$\Aut_\Gamma(L_V)$ by a finite index subgroup.

\begin{lem}\label{lem_indice_fini}
    Let $h\in\HOM$ and let $M$ be a subgroup of $\Aut_\Gamma(L_V)$.
    Assume that $\{h\circ \alpha \mid \alpha\in M\}$ is a discriminating family.
    
    Then for any finite index subgroup $M_0$ of $M$, the subfamily
    $\{h\circ \alpha \mid \alpha\in M_0\}$ is still discriminating.
\end{lem}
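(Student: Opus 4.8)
The plan is to exploit the fact that a finite-index subgroup $M_0 \leq M$ can be covered by finitely many cosets, so a discriminating family indexed by $M$ breaks into finitely many pieces indexed by $M_0$, and at least one of these pieces must carry the discriminating power on any prescribed finite set. First I would fix a finite subset $F \subset L_V$ and, passing to a finite-index \emph{normal} subgroup of $M$ contained in $M_0$ if convenient (or simply choosing coset representatives), write $M = \bigsqcup_{j=1}^{k} M_0 \beta_j$ with $\beta_1, \dots, \beta_k \in M$. The key observation is that for a fixed $\beta \in M$, the map $\alpha \mapsto \alpha\circ\beta$ is a bijection of $M$, so applying the hypothesis to the enlarged finite set $F' = \bigcup_{j=1}^{k} \beta_j^{-1}(F)$ (a finite union of finite sets, since each $\beta_j^{-1}$ is an automorphism) yields some $\alpha \in M$ with $h\circ\alpha$ injective on $F'$.

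The second step is to locate this $\alpha$ in one of the cosets: write $\alpha = \alpha_0 \beta_j$ with $\alpha_0 \in M_0$ and $j \in \{1,\dots,k\}$. Then $h \circ \alpha = (h\circ\alpha_0)\circ\beta_j$ is injective on $F'$, hence on $\beta_j^{-1}(F) \subseteq F'$; composing with the bijection $\beta_j$ shows that $h\circ\alpha_0 = (h\circ\alpha)\circ\beta_j^{-1}$ is injective on $F$. Since $\alpha_0 \in M_0$, this exhibits an element of the subfamily $\{h\circ\alpha \mid \alpha\in M_0\}$ that is injective on $F$. As $F$ was arbitrary, the subfamily is discriminating.

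I do not anticipate a serious obstacle here; the only point requiring a little care is the bookkeeping with the coset representatives and making sure the enlarged finite set $F'$ is the right one, i.e.\ that we enlarge $F$ by \emph{preimages} under the $\beta_j$ rather than images (equivalently, one may phrase everything in terms of the sequence version of discriminating from Remark \ref{rem_discriminating}, using that $L_V$ is countable, but the direct argument above is cleaner). One should also note that $M$ itself need not be normal in $\Aut_\Gamma(L_V)$ and this plays no role: the statement and proof are entirely internal to the pair $M_0 \leq M$.
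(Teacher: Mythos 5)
Your proof is correct and follows essentially the same route as the paper: decompose $M$ into right cosets of $M_0$, enlarge the given finite set by pulling back under the coset representatives, apply the discriminating hypothesis for $M$, and then push forward by the relevant representative to land in $M_0$. The only cosmetic difference is that the paper phrases discriminating-ness in terms of non-vanishing on finitely many nontrivial elements rather than injectivity on a finite set, but these are equivalent as noted after Remark \ref{rem_discriminating}.
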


\begin{proof}
    Write $M=M_0\phi_1 \dunion \dots\dunion  M_0\phi_n$ for some $\phi_1,\dots,\phi_n\in M$.
    Fix a finite set $\{g_1, \ldots, g_r\} \subseteq L_V$.  
    Since $\{h\circ\alpha|\alpha\in M\}$ is discriminating,
    there exists $\alpha\in M$ such that $h\circ\alpha$ does not kill any $\phi\m_i(g_j)$. 
    Let $k$ be such that $\alpha = \alpha_0 \phi_k$ for some $\alpha_0 \in M_0$. 
    Then $h \circ \alpha_0$ does not kill any $g_j$.
\end{proof}

\subsection{An example of homogeneous variety} \label{sec_ex_homogeneous_variety}

Let $\Gamma$ be a torsion-free hyperbolic group and let $c\in \Gamma\setminus\{1\}$ be such that 
$\grp{c}$ is a maximal cyclic group of $\Gamma$ and
such that any endomorphism of $\Gamma$ fixing $c$ is an automorphism (such a $c$ is known as a \emph{test element} in $\Gamma$).
Then the double $L_V=\Gamma*_c\bar\Gamma$ is a $\Gamma$-limit group, 
and it is the function group  of a homogeneous variety: for any $h \in \HOM$ we have that $h_{|\bar \Gamma}$ 
sends $\bar c$ to $c$, so it must be an isomorphism. 
Therefore $h=r \circ \sigma$ where $r$ is the canonical retraction and $\sigma \in \Aut_{\Gamma}(L_V)$.  
 This shows that $\Aut_\Gamma(L_V)$ acts transitively
on $V=\HOM$.

For instance, one can take for $\Gamma$ the free group $\Gamma=\bbF_{2g}=\grp{a_1,\dots,a_{2g}}$, and  let $c$ be the product of commutators $[a_1,a_2]\dots[a_{2g-1},a_{2g}]$. 
In this case, the double $L_V$ is the fundamental group of an orientable surface of genus $2g$, and $\Gamma\subset L_V$ is the fundamental group of a subsurface of genus $g$ with one boundary component.
The variety $V$ itself is the subset of $\Gamma^{2g}$ defined by the equation
$$V=\{(x_1,\dots,x_{2g})| [x_1,x_2]\dots[x_{2g-1},x_{2g}]=[a_1,a_2]\dots[a_{2g-1},a_{2g}]\}.$$
The automorphism group $\Aut_\Gamma(L_V)$ is isomorphic to the mapping class
group of the surface of genus $g$ with one boundary component.
 This group acts freely on $V=\HOM$ because $\Gamma$-homomorphisms $L_V\ra \Gamma$ are injective on $\bar\Gamma$
 and $\bar \Gamma$ is $\Aut_\Gamma(L_V)$-invariant.
In particular, points of $V$ are in bijection with elements of the mapping class group.

There are very many other test elements in a free group \cite{SnopceTanushevski_asymptotic}.
In particular, there exist test elements $c$ for which $\bar \bbF_n$ is 
a rigid vertex of the amalgam $\bbF_n *_c \bar\bbF_n$ (take a test element $c$ whose cyclic reduction contains all subwords of length 3 and apply \cite{CaMa_virtual}). 
 This implies that the splitting $L_V=\bbF_n *_c \bar\bbF_n$ 
 is the JSJ decomposition of $L_V$ with $\bar F_n$ a rigid vertex group.
It follows that
the group generated by the Dehn twist along 
$c$ has finite index in $\Aut_\Gamma(L_v)$.
In particular  $\Aut_\Gamma(L_V)$ is virtually cyclic and
as above, 
the action of $\Aut_\Gamma(L_V)$ on $V=\HOM$ is free and transitive.

\subsection{Grushko decomposition for a homogeneous variety}

\begin{prop}\label{prop_freeprod}
If $V$ is homogeneous and irreducible, 
the Grushko decomposition of $L_V$ relative to $\Gamma$ has the form $L_V=L^0_V*\bbF_r$ with $\Gamma\subset L^0_V$,
and $L^0_V$ is the group of functions of a homogeneous irreducible variety.
\end{prop}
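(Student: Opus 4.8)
The plan is to analyze how the multiplication by elements of $\Aut(V)$ interacts with the Grushko decomposition of $L_V$ relative to $\Gamma$, and to use homogeneity to conclude that the free factor containing $\Gamma$ is canonical and itself the function group of a homogeneous variety.

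\medskip

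\noindent\textbf{Step 1: Canonicity of the Grushko factor containing $\Gamma$.} Write a Grushko decomposition $L_V = L^0_V * L^1_V * \dots * L^k_V * \bbF_r$ relative to $\Gamma$, where $\Gamma \subset L^0_V$, each $L^i_V$ ($i\geq 1$) is freely indecomposable and not cyclic, and $\bbF_r$ is free. Since the Grushko decomposition relative to $\Gamma$ is unique up to conjugation and permutation of the factors $L^1_V,\dots,L^k_V$, the conjugacy class of the subgroup $L^0_V$ is canonical: it is preserved (up to conjugation) by every $\alpha \in \Aut_\Gamma(L_V)$. After composing with an inner automorphism we may assume $\alpha(L^0_V) = L^0_V$ for each such $\alpha$; more carefully, for each $\alpha$ there is $g_\alpha \in L_V$ with $\alpha(L^0_V) = g_\alpha L^0_V g_\alpha\m$, and since $\Gamma \subset L^0_V \cap \alpha(L^0_V)$ and $\Gamma$ is non-elementary (so not contained in a proper conjugate of a malnormal-ish free factor... here one uses that $\Gamma$ is its own normalizer-type argument), one gets $g_\alpha \in L^0_V$, hence $\alpha(L^0_V) = L^0_V$ on the nose. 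This gives a restriction homomorphism $\Aut_\Gamma(L_V) \to \Aut_\Gamma(L^0_V)$.

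\medskip

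\noindent\textbf{Step 2: Identifying $L^0_V$ as a function group.} The free factor $L^0_V$ is finitely generated, residually $\Gamma$ (being a subgroup of the residually $\Gamma$ group $L_V$), contains the preferred copy of $\Gamma$, and is fully residually $\Gamma$ since $L_V$ is (being a $\Gamma$-limit group by Lemma~\ref{lem_discriminating_V0}, irreducibility of $V$) — subgroups of fully residually $\Gamma$ groups are fully residually $\Gamma$. Hence by Sela's identification $L^0_V$ is a $\Gamma$-limit group, and it is the coordinate group $L_{V^0}$ of the irreducible variety $V^0 := \Hom_\Gamma(L^0_V,\Gamma)$, using the correspondence of Section~\ref{sec_alg_geom_tools} (pick a $\Gamma$-generating tuple of $L^0_V$ to realize it concretely as a variety).

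\medskip

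\noindent\textbf{Step 3: Homogeneity of $V^0$.} Use the homogeneity criterion of Lemma~\ref{lem_homogeneity_in_Hom}: $V^0$ is homogeneous iff for all $h, h' \in \Hom_\Gamma(L^0_V, \Gamma)$ there is $\alpha^0 \in \Aut_\Gamma(L^0_V)$ with $h \circ \alpha^0 = h'$. Given $h, h'$ on $L^0_V$, extend them to $\Gamma$-homomorphisms $\tilde h, \tilde h' : L_V \to \Gamma$ by killing the other free factors (send $L^i_V$, $i\geq 1$, and $\bbF_r$ to $1$; this is where the free product structure is essential). By homogeneity of $V$ (Lemma~\ref{lem_homogeneity_in_Hom} again) there is $\alpha \in \Aut_\Gamma(L_V)$ with $\tilde h \circ \alpha = \tilde h'$. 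By Step 1, $\alpha$ preserves $L^0_V$, so $\alpha^0 := \alpha|_{L^0_V} \in \Aut_\Gamma(L^0_V)$, and $h \circ \alpha^0 = \tilde h \circ \alpha|_{L^0_V} = \tilde h'|_{L^0_V} = h'$. Thus $V^0$ is homogeneous, and we may rename $L^0_V = L_V^0$. Finally, the non-$L^0_V$ factors $L^1_V * \dots * L^k_V * \bbF_r$: one must still argue that the $L^i_V$ with $i\geq 1$ do not actually occur, i.e. the decomposition has the form $L^0_V * \bbF_r$. This follows because $V$ is homogeneous: any $h \in \Hom_\Gamma(L_V,\Gamma)$ restricted to $L^i_V$ gives a homomorphism $L^i_V \to \Gamma$, and homogeneity forces all of these to be related by $\Aut(L^i_V)$; combined with the fact that $L^i_V$ is freely indecomposable non-cyclic and residually $\Gamma$, one checks (using that $\Hom(L^i_V,\Gamma)$ must then be a single $\Aut(L^i_V)$-orbit and discriminating) this is incompatible unless there is no such factor — more simply, homogeneity of $V$ means points are indistinguishable, but points "seen through" $L^i_V$ would vary, forcing $L^i_V$ to be rigid with a transitive $\Aut$-action on a discriminating set, which a freely indecomposable non-cyclic $\Gamma$-limit group without a copy of $\Gamma$ cannot support. (The clean statement to invoke is that the argument of Step 3 shows each $L^i_V$ is the function group of a homogeneous variety with no constants, and such a variety with a discriminating homogeneous structure, being irreducible and non-trivial, must have $L^i_V$ free — contradicting freely indecomposable and non-cyclic.)

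\medskip

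\noindent\textbf{Main obstacle.} The delicate point is Step 1: pinning down precisely why $\Aut_\Gamma(L_V)$ can be taken to preserve $L^0_V$ setwise (not merely up to conjugacy), and more importantly the last part of Step 3 — ruling out the non-free, non-$\Gamma$-containing Grushko factors $L^i_V$. The statement as phrased asserts $L_V = L^0_V * \bbF_r$ with no intermediate factors, so one genuinely needs homogeneity to kill them; the conjugation bookkeeping for the extensions $\tilde h$ and the identification of $\alpha|_{L^0_V}$ with a well-defined element of $\Aut_\Gamma(L^0_V)$ independent of the chosen conjugating element is the technical heart.
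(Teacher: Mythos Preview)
Your Steps 1, 2, and the homogeneity portion of Step 3 are essentially the paper's argument, and your Step 1 in fact fills in a detail the paper leaves implicit (that $\alpha$ preserves $L^0_V$ setwise follows from malnormality of free factors together with $\Gamma\subset L^0_V\cap\alpha(L^0_V)$). The irreducibility of $V^0$ and the extension-of-morphisms trick for homogeneity of $V^0$ are exactly what the paper does.

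The genuine gap is the last part of Step 3, ruling out the factors $L^i_V$ ($i\geq 1$). Your proposed arguments there do not work. The claim that ``$\Hom(L^i_V,\Gamma)$ must be a single $\Aut(L^i_V)$-orbit'' does not follow from homogeneity of $V$, since an automorphism $\alpha\in\Aut_\Gamma(L_V)$ may permute the conjugacy classes of the $L^i_V$ rather than fix them. And the parenthetical assertion that a homogeneous irreducible variety without constants must have free coordinate group is precisely the kind of structural result the paper only establishes much later (via the JSJ analysis, absence of rigid and surface vertices, etc.); invoking it here would be circular.

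The paper's argument is much shorter and avoids all of this. Pick any $h_0\in\Hom_\Gamma(L^0_V,\Gamma)$ and extend it to $h\in\Hom_\Gamma(L_V,\Gamma)$ by killing each $L^i_V$ and $\bbF_r$. Since $\Aut_\Gamma(L_V)$ permutes the conjugacy classes of the $L^i_V$, every $h\circ\alpha$ (for $\alpha\in\Aut_\Gamma(L_V)$) still kills each $L^i_V$. But Corollary~\ref{cor_discriminating} says $\{h\circ\alpha\mid\alpha\in\Aut_\Gamma(L_V)\}$ is discriminating, which is impossible if some $L^i_V$ is nontrivial. Hence $n=0$. You had all the ingredients for this (you already used the ``extend by killing the other factors'' trick), you just did not combine them with the discriminating property.
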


\begin{proof}
Let $L_V=L^0_V*G_1*\dots *G_n*\F_r$ be the Grushko decomposition of $L_V$ relative to $\Gamma$, with $\Gamma\subset L^0_V$, and
note that $\Aut_\Gamma(L_V)$ permutes the conjugacy classes of the factors $G_1, \dots ,G_n$.
Choose any $\Gamma$-homomorphism $h_0\in\HOM[L^0_V]$, and extend it to a $\Gamma$-homomorphism  $ h\in\HOM[L_V]$ that kills $G_1,\dots,G_n$.
Since $V$ is homogeneous, the set of $\Gamma$-homomorphisms $\{h\circ\alpha \mid \alpha\in\Aut_\Gamma(L_V)\}$ is discriminating
by Corollary \ref{cor_discriminating}.  
But all of them kill $G_1$,\dots, $G_n$.
This prevents this family from being discriminating unless $n=0$.

As a subgroup of $L_V$, $L_V^0$ is fully residually $\Gamma$. Thus by Lemma \ref{lem_discriminating_V0}, the variety
associated to $L_V^0$ is irreducible.
To prove that this variety is homogeneous, 
it is enough by Lemma \ref{lem_homogeneity_in_Hom}
to check that $\Aut_\Gamma(L_V^0)$ acts transitively on $\HOM[L^0_V]$.
Given $h_0,h'_0\in \HOM[L^0_V]$, extend them to $h,h'\in\HOM$ by killing the factor $\bbF_r$.
By homogeneity of $V$, there exists $\alpha\in \Aut_\Gamma(L_V)$ such that $h'=h\circ \alpha$.
Since $\alpha$ preserves $L^0_V$, we get that $h'_0=h_0\circ \alpha_{|L_V^0}$.
\end{proof}

\subsection{JSJ decomposition for a homogeneous variety}

\begin{prop}\label{prop_GeZ} 
Let $V$ be an irreducible variety 
and let  $\Lambda$ be a standard JSJ decomposition 
 of its group of functions $L_V$.

If $V$ is homogeneous, then all the edge groups of $\Lambda$ are cyclic 
and every $\Gamma$-homomorphism $h \in \HOM$ is injective 
in restriction to all edge groups and rigid groups of $\Lambda$.
\end{prop}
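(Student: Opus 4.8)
The plan is to exploit the discriminating property from Corollary \ref{cor_discriminating}: for any fixed $h\in\HOM$, the family $\{h\circ\alpha\mid\alpha\in\Aut_\Gamma(L_V)\}$ is discriminating on $L_V$. The strategy is to show that if some edge group $G_e$ of $\Lambda$ were non-cyclic (hence non-cyclic abelian), or if some $h\in\HOM$ failed to be injective on an edge group or on a rigid vertex group, then we could extract a single non-trivial element $g\in L_V$ that is killed by \emph{every} $h\circ\alpha$, contradicting discrimination. The key point making this work is that the relevant subgroups of $L_V$ — edge groups and rigid vertex groups — are $(\cala,\calh)$-universally elliptic, and by Lemma \ref{lem_prop_std_JSJ} the canonical standard JSJ decomposition $\Lambda$ is $\Aut_\Gamma(L_V)$-invariant (this is where irreducibility and the canonical tree-of-cylinders construction enter). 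So $\Aut_\Gamma(L_V)$ permutes the (finitely many) conjugacy classes of edge groups and of rigid vertex groups, and we can reduce a potential "collapse" to one of finitely many subgroups.

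First I would handle the edge groups. Suppose $G_e$ is a non-cyclic edge group; then $G_e$ is non-cyclic abelian, hence lies in $\calh$, and in particular is universally elliptic. Pick $h\in\HOM$. For each $\alpha\in\Aut_\Gamma(L_V)$, the image $h\circ\alpha$ restricted to $G_e$: since $\Gamma$ is hyperbolic, any homomorphism $\Gamma$-homomorphism $L_V\to\Gamma$ sends the abelian group $G_e$ into an abelian — hence cyclic or trivial — subgroup of $\Gamma$; a non-cyclic (say $\Z^2$ or larger) abelian group always has non-trivial kernel under any map to a cyclic group, so $h\circ\alpha$ kills a non-trivial element of $G_e$ for \emph{every} $\alpha$. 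But $G_e$ has only finitely many $\Aut_\Gamma(L_V)$-conjugates among edge groups, and $\alpha$ maps $G_e$ to a conjugate $g_\alpha G_{e'}g_\alpha^{-1}$ of one of them; choosing finitely many non-trivial elements $x_1,\dots,x_m$ — one in each edge group in this finite orbit, chosen so that no power survives a map to $\Z$... — here one must be slightly careful: one should instead argue that for a fixed finite generating set $F$ of (a conjugate of) $G_e$, for every $\alpha$ the map $h\circ\alpha$ is non-injective on the corresponding $\alpha$-translate of $F$, and deduce non-discrimination on the finite union of these finitely many translated generating sets. This contradicts Corollary \ref{cor_discriminating}, so all edge groups are cyclic.

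Next, the injectivity statement. Let $H$ be either an edge group or a rigid vertex group of $\Lambda$, and suppose some $h_1\in\HOM$ is non-injective on $H$, say $h_1(g)=1$ for some $g\in H\setminus\{1\}$. For an \emph{arbitrary} $h\in\HOM$, consider $h\circ\alpha$ for $\alpha\in\Aut_\Gamma(L_V)$. Since $\Lambda$ is $\Aut_\Gamma$-invariant, $\alpha^{-1}(H)$ is conjugate into one of the finitely many edge/rigid vertex groups; by homogeneity (Lemma \ref{lem_homogeneity_in_Hom}), the restriction of $h$ to that subgroup is, up to composing with a conjugation and an automorphism permuting the canonical pieces, "the same as" $h_1$ on $H$, so it too has non-trivial kernel. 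Concretely: fix representatives $H_1,\dots,H_k$ of the $\Aut_\Gamma(L_V)$-orbits of edge and rigid vertex groups; if the claim failed for one of them, we would get, using homogeneity to transport the bad homomorphism $h_1$, that every $h\circ\alpha$ kills some non-trivial element lying in one of the finitely many $\alpha$-images of the $H_i$. Packaging this into a finite subset of $L_V$ as above again contradicts discrimination. I expect the main obstacle to be the bookkeeping in this last reduction — making precise, using the $\Aut_\Gamma(L_V)$-invariance of $\Lambda$ together with homogeneity, that a single failure of injectivity propagates to \emph{all} of $\HOM$ via a \emph{finite} witnessing set — rather than any deep new input; the conceptual content is entirely in the interplay between universal ellipticity, canonicity of the JSJ, and the discriminating family.
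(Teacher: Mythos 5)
Your plan correctly identifies the role of the discriminating family and the $\Aut_\Gamma(L_V)$-invariance of the canonical JSJ decomposition, but the reduction you hope to carry out does not exist, and this is where the real content of the proof lies. To contradict discrimination you need a \emph{single finite} subset $F\subset L_V\setminus\{1\}$ such that \emph{every} $h\circ\alpha$ kills some element of $F$. Your argument only produces, for each $\alpha$, some element killed by $h\circ\alpha$ lying in one of finitely many subgroups (an edge group or rigid vertex group, up to conjugacy) — but the killed elements themselves range over an infinite set. Already in the simplest offending case $G_e\cong\Z^2$, the kernels of the maps $(m,n)\mapsto c^{m+kn}$ for varying $k\in\Z$ have pairwise trivial intersection, so no finite subset of $G_e\setminus\{1\}$ meets all of them; the same phenomenon occurs for non-injectivity on a rigid vertex group, where the automorphisms $\alpha$ (e.g.\ Dehn twists along incident edges) move a fixed witness $g_1$ through an infinite orbit. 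You flag this as ``bookkeeping,'' but it is precisely where the naive approach collapses.

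The paper's proof instead establishes injectivity on rigid vertex groups \emph{first}, by a Bestvina--Paulin/Sela shortening argument. One restricts to a finite-index subgroup $\Aut^0_\Gamma(L_V)$ acting trivially on the JSJ graph and obtains a discriminating sequence $h\circ\alpha_i$. Since each $\alpha_i$ preserves the conjugacy class of each edge group $E_k$ of the rigid vertex $R$, the images $h\circ\alpha_i(E_k)$ land in fixed maximal cyclic subgroups $\langle u_k\rangle$ of $\Gamma$, and one views $\Gamma$ as hyperbolic relative to these. If the $(h\circ\alpha_i)_{|R}$ were pairwise distinct modulo conjugation in $\Gamma$, the rescaled limit would give a non-trivial action of $R$ on an $\bbR$-tree, relative to the $E_k$'s, with abelian arc stabilizers (\cite{BeSz_endomorphisms}); the Rips machine (\cite{Gui_actions}, \cite{GL4}) then produces an abelian splitting of $R$ relative to its incident edge groups and to non-cyclic abelian subgroups, contradicting rigidity. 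Hence $(h\circ\alpha_i)_{|R}$ stabilizes up to conjugation, and injectivity follows from the discriminating property. Cyclicity of edge groups and injectivity on them are then easy corollaries. The $\bbR$-tree/Rips-machine input is essential and cannot be replaced by a finite-witness combinatorial argument of the type you propose.
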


\begin{proof}
By Proposition \ref{prop_freeprod}, it is enough to show the result in the case where $L_V$ is one ended. 

Let $\Aut^0_\Gamma(L_V)$ be the subgroup of finite index of $\Aut_\Gamma(L_V)$ 
consisting of automorphisms that act as the identity on the graph $\Lambda$ 
(recalling that the canonical JSJ tree is invariant under automorphisms). 

Let $h \in \Hom_{\Gamma}(L_V, \Gamma)$. By Corollary \ref{cor_discriminating} and Lemma \ref{lem_indice_fini}, 
the family $\{h\circ\alpha \mid \alpha \in \Aut^0_\Gamma(L_V)\}$ is discriminating. 
Thus, by Remark \ref{rem_discriminating}, 
there exists a sequence of automorphisms $\alpha_i\in 
\Aut^0_\Gamma(L_V)$ such that $h\circ\alpha_i$ is a discriminating sequence.

Let $R\subset L_V$ be a rigid vertex group of $\Lambda$, and $E_1,\dots,E_p\subset R$ the collection of incident edge groups.
Since each $E_k$ is abelian, $h\circ\alpha_i(E_k)$ is cyclic.
Up to removing finitely many terms in the sequence we may assume that $h\circ\alpha_i(E_k)$ is non-trivial for all $i,k$.
Since for all $i$, $\alpha_i(E_k)$ is conjugate to $E_k$ in $L_V$,
there exist maximal cyclic groups $\grp{u_k}\subset \Gamma$ such that for all $i$, 
$h\circ\alpha_i(E_k)$ is conjugate into $\grp{u_k}$.

We now view $\Gamma$ as hyperbolic relative to $\grp{u_1},\dots,\grp{u_k}$, 
and we look at 
an action of $\Gamma$ on a space with a corresponding collection of
disjoint horoballs as in \cite{Bow_relhyp}.
Assume first that the homomorphisms $h\circ \alpha_i$ are pairwise distinct modulo conjugation in $\Gamma$.
Since $R$ is non-abelian, $h\circ \alpha_i(R)$ is non-parabolic for $i$ large enough.
By  \cite[Theorem 1.2(i)]{BeSz_endomorphisms}, 
we get a non-trivial action  of $R$
on an $\bbR$-tree relative to $E_1, \dots, E_p$.
Part $(ii)$ of the same result states that if moreover every homomorphism
$h \circ \alpha_i$ is injective, then arc stabilizers in the $\bbR$-tree are abelian. 
Actually, the proof also applies here because the homomorphisms
$h \circ \alpha_i$ form a discriminating family.

Since 
abelian subgroups of $L_V$ are finitely generated, 
the hypotheses of \cite[Theorem 5.1, Corollary 5.2]{Gui_actions} are satisfied, and this action yields an abelian splitting of $R$ relative to $E_1,\dots,E_p$.
Taking a tree of cylinders for commutation as in \cite[Proposition 6.3]{GL4}, 
one gets a non-trivial abelian splitting of $R$ relative to $E_1,\dots, E_p$ and to all non-cyclic abelian subgroups of $R$,
contradicting its 
rigidity.

Thus, up to conjugating and taking a subsequence, we 
may assume that $(h\circ\alpha_i)_{|R}$ does not depend on $i$.
Since it is discriminating, this implies that $(h\circ \alpha_i)_{|R}$ is injective, hence so is $h_{|R}$.

This shows that $h$ is injective on rigid vertex groups of $\Lambda$.
It follows that every edge group $E$ of $\Lambda$ is cyclic because
every edge of $\Lambda$ is adjacent to a rigid or surface vertex.
Since $h\circ\alpha_i$ is discriminating and $\Gamma$ is torsion-free, 
$h\circ\alpha_i$ is injective on $E$ for $i$ large enough.
Since $\alpha_i$ preserves the conjugacy class of $E$, this shows that 
$h$ is injective on $E$.
\end{proof}

\begin{lem}\label{lem_homogene} Let $V$ be a homogeneous irreducible variety over $\Gamma$.
Then a standard
JSJ decomposition $\Lambda$ of $L_V$ relative to $\Gamma$ satisfies the following:
\begin{enumerate}
    \item\label{it_ab} in each abelian vertex group, the subgroup generated by 
    incident edge groups is cyclic; 
    \item\label{it_vGamma} 
     $G_{v_\Gamma}=\Gamma$;
    \item \label{it_maxZ} 
    for every non-trivial edge group $G_e$ incident on $v_\Gamma$,
    $G_e$ is maximal cyclic in $L_V$

    \item \label{it_boucle} 
    If $e, e'$  are distinct edges incident on $v_\Gamma$, then $G_e$ does not commute with any $L_V$-conjugate of $G_{e'}$.
    In particular, for every abelian vertex $v$ adjacent to $v_\Gamma$, there is at most one edge joining $v$ to $v_\Gamma$.
\end{enumerate}
\end{lem}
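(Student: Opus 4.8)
The plan is to establish the four items separately, treating \ref{it_vGamma} first since \ref{it_maxZ} and \ref{it_boucle} depend on it, then \ref{it_maxZ}, then \ref{it_boucle}, and finally \ref{it_ab}, which carries the real content. Throughout, exactly as in the proof of Proposition \ref{prop_GeZ}, I would invoke Proposition \ref{prop_freeprod} to reduce to the case where $L_V$ is one-ended relative to $\Gamma$: then $\Lambda$ is the canonical JSJ and is $\Aut_\Gamma(L_V)$-invariant. In the general case the extra free factor contributes only loops with trivial edge group at $v_\Gamma$, which are irrelevant to all four statements.

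For \ref{it_vGamma}: $v_\Gamma$ is rigid by Lemma \ref{lem_prop_std_JSJ}(\ref{it_base_vertex}), so by Proposition \ref{prop_GeZ} any fixed $h\in\HOM$ is injective on $G_{v_\Gamma}$; since $h$ is the identity on $\Gamma$, for $g\in G_{v_\Gamma}$ the element $g\,h(g)\m$ lies in $\ker h$, hence is trivial, so $g=h(g)\in\Gamma$ and $G_{v_\Gamma}=\Gamma$. Item \ref{it_maxZ} then follows: $v_\Gamma$ is non-abelian, so by Lemma \ref{lem_prop_std_JSJ}(\ref{it_maxab_edge}) a non-trivial edge group $G_e=\grp g$ is maximal abelian, hence maximal cyclic, in $G_{v_\Gamma}=\Gamma$, i.e.\ $g$ is not a proper power in $\Gamma$; if $g=w^k$ in $L_V$ with $|k|\ge 2$, applying any $h\in\HOM$ would give $g=h(w)^k$ in $\Gamma$, a contradiction, so $\grp g$ is maximal cyclic in $L_V$. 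For \ref{it_boucle}: if $g\in G_e$ commutes with $xg'x\m$ where $G_{e'}=\grp{g'}$ and $x\in L_V$, then applying $h\in\HOM$ and using \ref{it_vGamma} the non-trivial elements $g$ and $h(x)g'h(x)\m$ commute in $\Gamma$, so by the CSA property they both lie in a maximal cyclic subgroup $Z\le\Gamma$; since $\grp g$ and $\grp{g'}$ are maximal cyclic in $\Gamma$ by \ref{it_maxZ}, this forces $\grp g=Z=h(x)\grp{g'}h(x)\m$, i.e.\ $G_e$ and $G_{e'}$ are conjugate by $h(x)\in\Gamma=G_{v_\Gamma}$, contradicting Lemma \ref{lem_prop_std_JSJ}(\ref{it_malnormal_ab}). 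The displayed consequence is immediate since two edges joining $v_\Gamma$ to an abelian vertex $v$ have edge groups inside the abelian group $G_v$.

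For \ref{it_ab} the strategy is a counting argument against discrimination. Suppose some abelian vertex $v$ has $C:=\grp{E_1,\dots,E_p}\subseteq A:=G_v$ non-cyclic, where $E_i=\grp{a_i}$ are the (cyclic, by Proposition \ref{prop_GeZ}) incident edge groups. Fix $h_0\in\HOM$; since $h_0(A)$ is an abelian subgroup of $\Gamma$ it is cyclic, so $\ker(h_0|_C)\ne 1$. By Corollary \ref{cor_discriminating} and Lemma \ref{lem_indice_fini}, the family $\{h_0\circ\alpha\mid\alpha\in\Aut^0_\Gamma(L_V)\}$, where $\Aut^0_\Gamma(L_V)$ is the finite-index subgroup acting trivially on $\Lambda$, is still discriminating. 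The crucial point is that for such $\alpha$, since $\alpha$ fixes $v$ and its incident edges in $\Lambda$ and $A$ is abelian, the conjugators involved lie in a single coset of $A$, so $\alpha$ restricted to $A$ agrees, up to an inner automorphism of $L_V$, with an automorphism $\beta$ of $A$ preserving each $\grp{a_i}$ setwise; the group of such $\beta$ is finite (each is determined by the signs $a_i\mapsto a_i^{\pm1}$). Hence $\ker((h_0\circ\alpha)|_C)=\beta\m(\ker(h_0|_C))$ ranges over a finite family of non-trivial subgroups of $C$; picking one non-trivial element in each yields a finite subset of $L_V$ killed by every member of the discriminating family, a contradiction. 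So $C$ must be cyclic.

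The main obstacle will be \ref{it_ab}: items \ref{it_vGamma}--\ref{it_boucle} are short once Proposition \ref{prop_GeZ}, Lemma \ref{lem_prop_std_JSJ} and the CSA property of $\Gamma$ are available, whereas \ref{it_ab} requires both the observation that images in $\Gamma$ of abelian subgroups are cyclic --- so that the kernels on $C$ are automatically ``large'' --- and a careful check, via the behaviour of abelian vertices in the tree of cylinders, that an element of $\Aut^0_\Gamma(L_V)$ can only move the incident edge subgroups of an abelian vertex within a finite group of automorphisms of that vertex group. I would also need to verify the trivial-edge cases and the reduction to the one-ended situation, but these should be routine.
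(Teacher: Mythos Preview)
Your proof is correct and follows the paper's line closely; items \ref{it_vGamma}--\ref{it_boucle} are essentially identical to the paper's treatment. The one difference worth noting is in \ref{it_ab}: the paper passes to a slightly smaller finite-index subgroup $\Aut^0_\Gamma(L_V)$, consisting of automorphisms that act trivially on $\Lambda$ \emph{and} act by conjugation on each edge group (finite index precisely because edge groups are cyclic). With this extra condition, CSA forces the conjugators for the various incident edge groups of $A$ to lie in a single coset of $A$, so $\alpha$ acts by a single conjugation on all of $B$; hence $\ker(h\circ\alpha)|_B$ is literally constant, and discrimination gives injectivity of $h|_B$ directly. Your version, allowing the sign flips $a_i\mapsto a_i^{\pm1}$ and arguing with finitely many possible kernels, also works; the paper's trick just short-circuits the bookkeeping. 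One small imprecision in your write-up: the automorphisms $\beta$ of $A$ preserving each $\langle a_i\rangle$ setwise need not form a finite group when $C\subsetneq A$, but their \emph{restrictions to $C$} do, and that is all your argument uses.
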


\begin{proof} 
Using Proposition \ref{prop_freeprod}, we may assume without loss
of generality that $L_V$ is freely indecomposable relative to $\Gamma$,
and that $L_V=\pi_1(\Lambda)$ is the canonical JSJ decomposition of $L_V$.

Let $\Aut^0_\Gamma(L_V)$ be the subgroup of $\Aut_\Gamma(L_V)$ 
consisting of automorphisms that act as the identity on the graph $\Lambda$ and whose  restriction to each edge group is a conjugation. The edge groups of $\Lambda$ are cyclic by Proposition \ref{prop_GeZ} so this is a finite index subgroup of $\Aut_\Gamma(L_V)$.

Let $h \in \Hom_{\Gamma}(L_V, \Gamma)$. By Corollary \ref{cor_discriminating} and Lemma \ref{lem_indice_fini},
there exists a sequence of automorphisms $\alpha_i\in 
\Aut^0_\Gamma(L_V)$ such that $h\circ\alpha_i$ is a discriminating sequence.

To prove (\ref{it_ab}),
let $A$ be an abelian vertex group of $\Lambda$, and $B\subset A$ be the subgroup generated by incident edge groups. 
Each $\alpha_i$ preserves the conjugacy class of $A$ and acts by conjugation on each incident edge group. Using CSA property, it easily follows that $\alpha_i$ acts by conjugation on $B$. Thus, 
$\ker( h\circ \alpha_i)_{|B}=\ker h_{|B}$ and does not depend on $i$.
Since this is a discriminating sequence, $h_{|B}$ is injective. Since $\Gamma$ is hyperbolic, it
follows that $B$ is cyclic.

For (\ref{it_vGamma}), recall that $G_{v_\Gamma}$ denotes the vertex group that contains $\Gamma$, and 
consider any $h\in \HOM$. 
The vertex group $G_{v_\Gamma}$ is rigid by Proposition \ref{lem_prop_std_JSJ} (\ref{it_base_vertex}). 
By Proposition \ref{prop_GeZ}, $h$ is injective on $G_{v_\Gamma}\supset\Gamma$. 
On the other hand, being a $\Gamma$-morphism, $h$ is the identity on $\Gamma$, so $G_{v_\Gamma}=\Gamma$.

To prove (\ref{it_maxZ}), consider an edge $e$ incident on $v_\Gamma$. 
By Proposition \ref{prop_GeZ}, $G_e$ is infinite cyclic. 
By Lemma \ref{lem_prop_std_JSJ}(\ref{it_maxab_edge}),
$G_e$ is maximal abelian hence maximal cyclic in $G_{v_\Gamma}=\Gamma$. 
Assuming that $G_e$ is not maximal cyclic in $L_V$, 
consider $c\in L_V$ and $k>1$ such that $G_e=\grp{c^k}$.
Choose some $h\in \HOM$ (there exists one because $L_V$ is a $\Gamma$-limit group),
and view it as a retraction $L_V\ra \Gamma$.
Since $c^k\in \Gamma$, we have $c^k=h(c^k)=h(c)^k$ 
contradicting that that $G_e$ is maximal cyclic in $\Gamma$.

To prove (\ref{it_boucle}), consider two distinct edges $e,e'$ 
incident on $v_\Gamma$ whose edge groups have commuting conjugates in $L_V$. 
View $G_e$ and $G_{e'}$ as subgroups of $\Gamma$.
Using a retraction $L_V\ra \Gamma$ as above, we see that 
$G_e$ and $G_{e'}$ have commuting conjugates in $\Gamma$.
By Lemma \ref{lem_prop_std_JSJ}(\ref{it_malnormal_ab}),
this implies $e=e'$.
\end{proof}

\section{Extension of centralizers and algebraic subgroups} \label{sec_trimming}

The goal of this section is Theorem \ref{thm_norigid} showing that the JSJ decomposition associated to the group of functions $L_V$ of a \conn\ algebraic group $V$
has no rigid vertex except for $v_\Gamma$.
We will first associate to $V$ an algebraic subgroup $W_0\subset V$
whose associated limit group is obtained by replacing some abelian vertex groups by the cyclic subgroup generated
by incident edge groups. This will be used to slightly simplify some arguments including the description of $L_{V\times V}$.

\subsection{An algebraic subgroup associated to an extension of centralizers} \label{sec_extension_centralizers}

Let $V$ be a \conn\ algebraic group. Let $\Lambda$ be a standard JSJ decomposition of $L_V$ (we don't assume that $L_V$ is one-ended).

Denote by $A_1,\dots,A_q$ the collection of abelian vertex groups in $\Lambda$.
Using Lemma \ref{lem_homogene}(\ref{it_ab}), for each $i\leq q$, 
denote by $\grp{c_i}$  the smallest direct factor of $A_i$ containing all incident edge groups. 
    Let $L_W\subset L_V$ be the subgroup obtained by replacing each abelian vertex group $A_i$ by the cyclic group $\grp{c_i}$.

We will apply the following lemma to the $\Gamma$-homomorphism $h_e\in \HOM$ corresponding to
the neutral element $e\in V$.

\begin{lem}\label{lem_direct_sum}
    Let $V$ be a \conn\  algebraic group as above. 

    Then for any $h\in \HOM$, 
    each abelian vertex group 
    $A_i$ splits as $A_i=\grp{c_i} \oplus \ker h_{|A_i}$.
\end{lem}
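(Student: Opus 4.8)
The statement asserts that for any $h \in \HOM$, each abelian vertex group $A_i$ of $\Lambda$ decomposes as a direct sum $A_i = \grp{c_i} \oplus \ker h_{|A_i}$. Since $A_i$ is a finitely generated torsion-free abelian group (it is a subgroup of a $\Gamma$-limit group, which is finitely presented and torsion-free), it suffices to show two things: first, that $\grp{c_i} \cap \ker h_{|A_i} = \{1\}$, i.e. $h$ is injective on $\grp{c_i}$; and second, that $A_i = \grp{c_i} + \ker h_{|A_i}$, i.e. the sum is all of $A_i$.

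\smallskip

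\textbf{Injectivity of $h$ on $\grp{c_i}$.} By construction, $\grp{c_i}$ is the smallest direct factor of $A_i$ containing all incident edge groups; in particular it contains the incident edge group $G_e$ of at least one edge $e$ incident on the abelian vertex carrying $A_i$ (abelian vertices are not isolated in a standard JSJ decomposition). By Proposition \ref{prop_GeZ}, $h$ is injective in restriction to the edge group $G_e$, which is infinite cyclic; write $G_e = \grp{c_i^{k}}$ for some $k \geq 1$ (using that $A_i/\grp{c_i}$ is the complementary free abelian factor, so $G_e$ sits inside $\grp{c_i}$ with some index $k$ in its own maximal cyclic subgroup of $\grp{c_i}$). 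Since $\grp{c_i}$ is infinite cyclic and $h$ does not kill the nontrivial element $c_i^k$, it does not kill $c_i$ either; hence $h_{|\grp{c_i}}$ is injective, and $\grp{c_i} \cap \ker h_{|A_i} = \{1\}$.

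\smallskip

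\textbf{The sum exhausts $A_i$.} Write $A_i = \grp{c_i} \oplus D_i$ for some free abelian complement $D_i$ (this uses Lemma \ref{lem_homogene}(\ref{it_ab}): the smallest direct factor containing the incident edge groups is cyclic, so such a complement exists). It remains to show $D_i \subset \ker h_{|A_i} \cdot \grp{c_i}$; equivalently, modulo $\grp{c_i}$, the group $A_i/\grp{c_i} \cong D_i$ is killed by the induced map. The key point is that every incident edge group lies in $\grp{c_i}$, so $D_i$ meets no incident edge group nontrivially, and in fact any element of $D_i$ is, up to adding an element of $\grp{c_i}$, in the kernel. Here I would use the discrimination machinery: the variety $V$ is homogeneous (being an algebraic group), so by Corollary \ref{cor_discriminating} and Lemma \ref{lem_indice_fini} there is a sequence $\alpha_j$ in the finite-index subgroup of $\Aut_\Gamma(L_V)$ acting trivially on $\Lambda$ and by conjugation on each cyclic edge group, with $h \circ \alpha_j$ discriminating. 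Each $\alpha_j$ preserves the conjugacy class of $A_i$ and acts by conjugation on the incident edge groups; using the CSA property (exactly as in the proof of Lemma \ref{lem_homogene}(\ref{it_ab})), $\alpha_j$ acts by conjugation on $\grp{c_i}$, hence — $A_i$ being abelian — acts trivially on $\grp{c_i}$ and permutes a complement. Tracking how $\alpha_j$ can move $D_i$: an automorphism of $L_V$ trivial on $\Lambda$ modifies an abelian vertex group $A_i$ only by "extension-of-centralizers" moves that are the identity on $\grp{c_i}$ and modify $D_i$ by adding elements... this is where I must be careful. The cleanest route: since $h \circ \alpha_j$ is discriminating and $\ker(h \circ \alpha_j)_{|A_i}$ has corank at most one in $A_i$ (the image, being a finitely generated abelian subgroup of the hyperbolic group $\Gamma$, is cyclic), while the restriction $\alpha_j|_{A_i}$ has image in a bounded-rank family, one forces $\ker h_{|A_i}$ itself to have corank exactly one, and then the rank count against the injectivity on $\grp{c_i}$ gives $A_i = \grp{c_i} \oplus \ker h_{|A_i}$.

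\smallskip

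\textbf{Main obstacle.} The delicate step is the second one: controlling precisely how automorphisms fixing $\Lambda$ act on the free abelian complement $D_i$, and concluding that $h$ kills a full-corank-one complement that can be taken to contain $D_i$ after adjusting by $\grp{c_i}$. The resolution should mirror the argument for Lemma \ref{lem_homogene}(\ref{it_ab}) but track not just the incident-edge subgroup $B = \grp{c_i}$ but all of $A_i$: the automorphisms $\alpha_j$ act on $A_i/\grp{c_i}$ in a way that, combined with discrimination, must be eventually injective on a corank-one subgroup, and since $h(A_i) \subset \Gamma$ is cyclic the kernel has corank exactly one; combined with $h$ injective on $\grp{c_i}$ and $A_i$ torsion-free abelian of the right rank, the internal direct sum $A_i = \grp{c_i}\oplus\ker h_{|A_i}$ follows.
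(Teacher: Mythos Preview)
Your first step (injectivity of $h$ on $\grp{c_i}$) is fine. The gap is in the second step. You assert that once $\ker h_{|A_i}$ has corank one and meets $\grp{c_i}$ trivially, ``the rank count'' gives $A_i=\grp{c_i}\oplus\ker h_{|A_i}$. This is false in general: in $\bbZ^2$ with $c_i=(1,0)$ and $h(a,b)=2a+b$, the kernel $\grp{(1,-2)}$ has corank one and meets $\grp{c_i}$ trivially, yet $\grp{c_i}+\ker h$ has index~$2$. Concretely, nothing you wrote rules out $h(\grp{c_i})\subsetneq h(A_i)$, and the paper explicitly points out that this is exactly the obstruction. The discrimination argument you sketch does not address this: the automorphisms $\alpha_j$ you invoke fix $\grp{c_i}$ (up to conjugation) but can act by arbitrary automorphisms of $A_i/\grp{c_i}$, so they give no control on whether $h(\grp{c_i})$ is all of $h(A_i)$.

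The paper's argument avoids this altogether. It fixes once and for all an arbitrary splitting $A_i=\grp{c_i}\oplus A'_i$, defines the retraction $r:L_V\to L_W$ killing each $A'_i$, and sets $h'=h\circ r$. For $h'$ one \emph{knows} that $\ker h'_{|A_i}=A'_i$ is a direct complement of $\grp{c_i}$. Homogeneity (Lemma~\ref{lem_homogeneity_in_Hom}) then gives $\alpha\in\Aut_\Gamma(L_V)$ with $h=h'\circ\alpha$; since $\alpha$ sends $A_i$ to some $A_j$ and $\grp{c_i}$ to $\grp{c_j}$, pulling back the splitting $A_j=\grp{c_j}\oplus A'_j$ by $\alpha^{-1}$ yields $A_i=\grp{c_i}\oplus\alpha^{-1}(A'_j)=\grp{c_i}\oplus\ker h_{|A_i}$. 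The point is to exhibit one morphism with a good kernel and transport it, rather than to analyse an arbitrary $h$ directly.
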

The lemma actually holds in fact for any homogeneous and irreducible variety.

\begin{proof}
Let $h\in\HOM$ be any $\Gamma$-homomorphism.
We know by Proposition \ref{prop_GeZ} that for all $i\leq q$, 
$h$ is injective on $\grp{c_i}$
and that $h$ has cyclic image. This does not imply the result because it might happen that 
$ h(\grp{c_i})\subsetneqq h(A_i)$.

    For each $i\leq q$, 
    choose an arbitrary decomposition $A_i=\grp{c_i}\oplus A'_i$, 
    and consider the retraction $r:L_V\ra L_W$ killing $A'_i$ for all $i$.
 Now consider the $\Gamma$-homomorphism $h'=h\circ r$. 
    Note that for each $i\leq q$, $\ker h'_{|A_i}=A'_i$ because
    $h'$ is injective on $\grp{c_i}$ by Proposition \ref{prop_GeZ}.
    By homogeneity, there exists $\alpha\in\Aut_\Gamma(L_V)$
    such that $h=h'\circ\alpha$.

    Fix an abelian vertex group $A_i$. 
    Up to precomposing $h$ by an inner automorphism (which does not change its kernel), we may assume that $\alpha(A_i)=A_{j}$ for some $j\leq q$.
    Then $\ker h_{|A_i}=\alpha\m(\ker h'_{|A_{j}})=\alpha\m(A'_{j})$. Since $\alpha\m(\grp{c_{j}})=\grp{c_i}$,
    the decomposition $A_{j}=\grp{c_j}\oplus A'_{j}$ gives a decomposition 
    $A_i=\grp{c_i}\oplus (\ker h_{|A_i})$.
\end{proof}
    
For each $i\leq q$, denote by $N_i=\ker h_{e|A_i}$ where
$h_e\in \HOM$ is the $\Gamma$-homomorphism corresponding to the neutral element $e\in V$.
By Lemma \ref{lem_direct_sum},  $A_i=\grp{c_i}\oplus N_i$.

We now define the variety and limit group obtained by trimming a collection of abelian groups in $L_V$ (see Figure \ref{fig_trim}).
Given a subset $\cala\subset \{1,\dots, q\}$, 
denote by $L_{W_\cala}\subset L_V$
the subgroup obtained by replacing, for each $i\in \cala$,  the abelian vertex group $A_i$ in $\cala$ by the cyclic group $\grp{c_i}$.
Thus $L_V$ can be viewed as a multiple extension of centralizers $L_V=L_{W_\cala}*_{\grp{c_i}} (\grp{c_i}\oplus N_i)$
(indexed by $i\in \cala$).
Consider the retraction $r:L_V\ra L_{W_\cala}$
killing $N_i$ for all $i\in \cala$, and denote by $j:L_{W_\cala}\ra L_V$ the inclusion.
Viewing the set $\HOM[L_{W_\cala}]$ as an algebraic variety $W_\cala$,
the precomposition by 
$r$ and $j$ respectively yield an embedding $r^*:W_\cala=\Hom_{\Gamma}(L_{W_\cala}, \Gamma)\ra V=\HOM$
and a retraction $j^*:V\ra W_\cala$ such that 
$j^*\circ r^*=(r\circ j)^*=\id_{W_\cala}$.

\begin{figure}[ht]
    \centering
    \includegraphics[width=\linewidth]{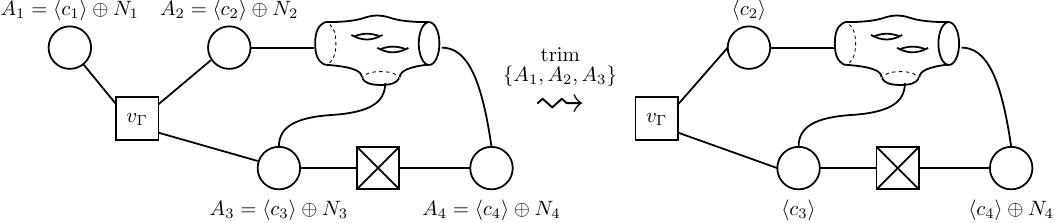}
    \caption{Trimming $\cala=\{A_1,A_2,A_3\}$. 
    Round vertices carry abelian groups, square vertices are rigid.
    The abelian vertex $\grp{c_1}$ disappears when we pass to the minimal splitting.}
    \label{fig_trim}
\end{figure}

\begin{dfn}[Trimming a collection of abelian groups]
We say that $W_\cala$ or $L_{W_\cala}$ are obtained from $V$ or $L_V$ by trimming the collection of abelian vertex groups $\cala$.
\end{dfn}

The following lemma says that the subset $r^*(W)$ is an algebraic subgroup of $V$
if the collection  of abelian vertices of $\Lambda$ corresponding to $\cala$ is invariant under $\Aut_\Gamma(L_V)$.

\begin{lem}\label{lem_algebraic_subgroup}
  Consider a \conn\ algebraic group $V$,
  and $\Lambda$ a standard JSJ decomposition of $L_V$.
 Consider a collection of abelian vertex groups $\cala=\{A_i\}$ of $\Lambda$, such that the corresponding set of vertices of $\Lambda$ is invariant under $\Aut_\Gamma(L_V)$.

Then the image of the embedding $r^*:W_\cala\ra V$ is an algebraic subgroup of $V$.
\end{lem}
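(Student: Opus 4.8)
The plan is to show that $r^*(W_\cala)$ is both a subvariety of $V$ and a subgroup of $(V,\mu)$, these two facts together giving the claim that it is an algebraic subgroup.

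\textbf{Step 1: $r^*(W_\cala)$ is a subvariety of $V$.} Since $r^*$ is the algebraic map associated (via the correspondence of Section \ref{sec_alg_geom_tools}) to the retraction $r:L_V\ra L_{W_\cala}$, and $j^*$ is the algebraic map associated to the inclusion $j:L_{W_\cala}\ra L_V$, we have $j^*\circ r^*=\id_{W_\cala}$. Hence $r^*$ is injective and its image is exactly the set of fixed points of the algebraic endomorphism $r^*\circ j^*:V\ra V$. The fixed-point set of an algebraic self-map $F:V\ra V$ is a subvariety: writing $F=(f_1,\dots,f_d)$ with $f_k\in L_V$, the set $\{v\in V\mid f_k(v)=v_k=X_k(v)\text{ for all }k\}$ is the variety defined by the equations $f_k x_k\m=1$. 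So $r^*(W_\cala)=\Fix(r^*\circ j^*)$ is a subvariety of $V$. (Equivalently, one can identify $r^*(W_\cala)$ with $\HOM[L_{W_\cala}]$ embedded in $\HOM[L_V]$ as those $h$ that factor through $r$, i.e.\ that kill all $N_i$, $i\in\cala$; this is a closed condition.)

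\textbf{Step 2: $r^*(W_\cala)$ is a subgroup of $(V,\mu)$.} This is where the hypothesis that the collection of vertices is $\Aut_\Gamma(L_V)$-invariant is used. By Corollary \ref{cor_magique}, for $v,v'\in V$ we have $h_{v\fois v'}=h_e\circ\rho_v^*\circ\rho_{v'}^*$, and since $\rho_v^*\in\Aut_\Gamma(L_V)$ preserves (up to conjugacy) each $A_i$ with $i\in\cala$, and also preserves $\grp{c_i}$ (by the argument as in Lemma \ref{lem_direct_sum}, $\rho_v^*$ must send $\grp{c_i}$, the smallest direct factor containing incident edge groups, to the corresponding subgroup of its image vertex), it follows that $\rho_v^*$ permutes the subgroups $N_i=\ker h_{e|A_i}$ up to conjugacy — more precisely I would check that $\rho_v^*$ carries $A_i=\grp{c_i}\oplus N_i$ to some conjugate of $A_{\sigma(i)}=\grp{c_{\sigma(i)}}\oplus N_{\sigma(i)}$ respecting the direct sum decompositions, which forces $\rho_v^*(N_i)$ to be conjugate to $N_{\sigma(i)}$. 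Now a point $v\in r^*(W_\cala)$ is characterized by $h_v(N_i)=1$ for all $i\in\cala$, equivalently $h_e(\rho_v^*(N_i))=1$ for all $i$. Using Lemma \ref{lem_hlambda}, $h_v\circ\rho_{v'}^*=h_{v\fois v'}$, so if $v,v'\in r^*(W_\cala)$ then for each $i$, $\rho_{v'}^*(N_i)$ is conjugate to $N_{\sigma(i)}$, and applying $h_v$ (which kills $N_{\sigma(i)}$ and is a $\Gamma$-homomorphism, hence kills all its conjugates because $h_v(g N_{\sigma(i)} g\m)=h_v(g) h_v(N_{\sigma(i)}) h_v(g)\m=1$) we get $h_{v\fois v'}(N_i)=h_v(\rho_{v'}^*(N_i))=1$, so $v\fois v'\in r^*(W_\cala)$. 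The same conjugation argument applied to $\rho_v^{*\,-1}=\rho_{\inv v}^*$ shows $r^*(W_\cala)$ is closed under inverses, and $e\in r^*(W_\cala)$ since $h_e$ kills each $N_i$ by definition. Thus $r^*(W_\cala)$ is a subgroup.

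\textbf{Conclusion.} Combining Steps 1 and 2, $r^*(W_\cala)$ is a subvariety of $V$ which is also a subgroup; restricting $\mu$ to $r^*(W_\cala)\times r^*(W_\cala)$ gives an algebraic group law (it is the restriction of an algebraic map), so $r^*(W_\cala)$ is an algebraic subgroup of $V$.

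\textbf{Main obstacle.} The delicate point is Step 2: verifying that $\rho_v^*$ (and more generally arbitrary elements of $\Aut_\Gamma(L_V)$) not only permute the conjugacy classes of the abelian vertex groups $A_i$ in the collection but do so compatibly with the decompositions $A_i=\grp{c_i}\oplus N_i$, so that the subgroups $N_i$ (not just the $A_i$) are permuted up to conjugacy. This requires the canonicity of $\grp{c_i}$ inside $A_i$ (it is the smallest direct factor containing the incident edge groups, hence canonical) together with the characterization of $N_i$ as $\ker h_{e|A_i}$ and Lemma \ref{lem_direct_sum}; one should be a little careful that the automorphism may move $A_i$ to a conjugate of a \emph{different} $A_j$, and that the complement $N_i$ is only well-defined once $h_e$ (equivalently $e$) is fixed, which is exactly why one needs the $\Gamma$-homomorphism $h_e$ to kill all the $N_j$ simultaneously.
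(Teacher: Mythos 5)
Your Step 1 is fine and matches the paper (the paper just notes directly that $r^*(W_\cala)$ is the set of $h\in\HOM$ killing all $N_i$, a closed condition; your idempotent/fixed-point description is an equivalent route). Step 2 has the right skeleton --- Corollary \ref{cor_magique} plus the fact that $\rho_{v'}^*$ sends each $N_i$ into a conjugate of some $N_{i'}$ --- but your justification of that last fact is flawed, and it is exactly the crux.

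You argue that because $\rho_v^*$ sends $A_i$ to a conjugate of $A_{\sigma(i)}$ (invariance of $\cala$) and sends $\grp{c_i}$ to a conjugate of $\grp{c_{\sigma(i)}}$ (canonicity of the smallest direct factor containing edge groups), it must ``respect the direct sum decompositions'' and hence carry $N_i$ to a conjugate of $N_{\sigma(i)}$ --- and you present this as holding for arbitrary $v\in V$, indeed for arbitrary elements of $\Aut_\Gamma(L_V)$. That implication is false: in a free abelian group $A=\grp{c}\oplus N$, an automorphism can fix $\grp{c}$ and permute the pair $(A,\grp{c})$ while moving $N$ to a completely different complement (think of $(a,b)\mapsto(a+b,b)$ on $\Z^2$). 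The complement $N_i$ is not canonical in $A_i$, and knowing the image of $\grp{c_i}$ tells you nothing about the image of $N_i$. So for a general $v\in V$, $\rho_v^*$ need not send $N_i$ anywhere near $N_{\sigma(i)}$; it will send $N_i$ to a conjugate of $\ker h_{v|A_{\sigma(i)}}$, which generally differs from $N_{\sigma(i)}=\ker h_{e|A_{\sigma(i)}}$.

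What actually makes the argument work --- and this is what the paper's proof does --- is that you are applying $\rho_{v'}^*$ for $v'$ already in $r^*(W_\cala)$. Since $h_{v'}(N_i)=1$ and $h_{v'}=h_e\circ\rho_{v'}^*$, you get $h_e(\rho_{v'}^*(N_i))=1$; combined with $\rho_{v'}^*(N_i)\subset\rho_{v'}^*(A_i)=gA_{i'}g\m$, this gives $\rho_{v'}^*(N_i)\subset g\bigl(\ker h_{e|A_{i'}}\bigr)g\m=gN_{i'}g\m$. That containment into a conjugate of $N_{i'}$ is all you need, and it comes from membership of $v'$ in the subset, not from any intrinsic canonicity of the decomposition $A_i=\grp{c_i}\oplus N_i$. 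You do write down the characterization $h_{v}(N_i)=1\Leftrightarrow h_e(\rho_v^*(N_i))=1$, so you have the ingredient in hand --- but the ``Main obstacle'' paragraph shows you were trying to fill the gap by the wrong route (canonicity of $\grp{c_i}$), which does not close it. Replace that part of Step 2 with the kernel argument above and the proof is correct and essentially identical to the paper's.
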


\begin{proof} 
    Note that $r^*(W_\cala)$ is the set of $\Gamma$-homomorphisms $h\in \HOM$ that kill $N_i$ for all $i\in \cala$.
   In particular, it is a subvariety of $V$.
    Let $w_1, w_2 \in r^*(W_\cala)$ and $h_{w_1}, h_{w_2} \in \HOM[L_V]$ be the corresponding $\Gamma$-homomorphisms.

Let us check that $w_1\fois w_2$  is in $r^*(W_\cala)$, i.e. that $h_{w_1\fois w_2}$ kills every $N_i$.
        We have $h_{w_i}= h_e \circ \rho^*_{w_i}$ (see Lemma \ref{lem_hlambda}). 

    By Corollary \ref{cor_magique}, we get
    $$h_{w_1\fois w_2}=h_e\circ \rho_{w_1}^*\circ\rho_{w_2}^*.$$
    We know that $h_{w_1} = h_e \circ \rho^*_{w_1}$ kills $N_i$ for all $i\in \cala$, 
    it thus suffices to prove that $\rho^*_{w_2}$ permutes the conjugacy classes of the groups $N_i$. 
    So fix $i\in \cala$.
    Since $\rho^*_{w_2}$ permutes the vertices associated to $\cala$, 
    consider $A_{i'}\in \cala$ 
    such that $\rho^*_{w_2}(A_i)$ is conjugate to $A_{i'}$ and in particular, $\rho(N_i)$ is conjugate into $A_{i'}$.
    Now $h_{w_2} = h_e \circ \rho^*_{w_2}$ kills $N_i$, hence $\rho^*_{w_2}$ sends $N_i$ into a conjugate of $\ker(h_{e|A_{i'}})=N_{i'}$. 
    This shows that $\rho_{w_2}^*$ permutes the groups $\{N_i|i\in \cala\}$ up to conjugation,
    and that $h_{w_1\fois w_2}$ kills every $N_i$ for $i\in \cala$. 
    Thus $h_{w_1\fois w_2}\in r^*(W_\cala)$ which concludes the proof that $r^*(W_\cala)$ is stable under multiplication.
    
    Since $\rho^*_{\inv w}=(\rho^*_w)\m$ permutes the $N_i$'s up to conjugation, 
    $h_{\inv w}=h_e\circ(\rho^*_w)\m$ kills every $N_i$ which shows that
    that $r^*(W_\cala)$ is stable under taking inverse.
\end{proof}

\begin{lem} \label{lem_trimA0}
Let $V$ be a \conn\ algebraic group $V$,
  and let $\Lambda$ be a standard JSJ decomposition of $L_V$.  
  Let $\cala_0$ 
  be the collection of 
  non-cyclic abelian vertex groups of $\Lambda$ adjacent to $v_{\Gamma}$.

Denote by $L_{W_0}$ the group obtained from $L_V$ by trimming $\cala_0$.
Let $\Lambda_{W_0}$ 
be the decomposition
of $L_{W_0}$ obtained from $\Lambda$ by replacing, 
for each $i \in \cala_0$,
the group $A_i$ by its edge summand $\grp{c_i}$ and passing to a minimal subgraph of groups.

Then $\Lambda_{W_0}$ is a standard JSJ decomposition of $L_{W_0}$.
\end{lem}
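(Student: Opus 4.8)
The plan is to verify that $\Lambda_{W_0}$ satisfies the hypotheses of Lemma \ref{lem_carac_JSJ}, i.e.\ that it is an $(\cala,\Gamma)$-tree whose edge groups are universally elliptic and whose vertex groups are either of surface type or rigid relative to the incident edge groups. The construction of $L_{W_0}$ is via a sequence of folds/trimmings: $L_V = L_{W_0} *_{\grp{c_{i_1}}} (\grp{c_{i_1}}\oplus N_{i_1}) \cdots$, indexed by $i \in \cala_0$; equivalently $L_{W_0}$ is a retract of $L_V$ obtained by killing the complementary summands $N_i$. First I would record that $L_{W_0}$, being a subgroup of the $\Gamma$-limit group $L_V$ (it is the image of the retraction $r: L_V \to L_{W_0}$, which we can also view as a subgroup), is again a finitely generated $\Gamma$-limit group, so the notion of standard JSJ decomposition applies to it.

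The heart of the matter is checking universal ellipticity of edge groups in $L_{W_0}$. Edges of $\Lambda_{W_0}$ fall into three classes: edges inherited unchanged from $\Lambda$ (not touching any trimmed $A_i$), edges that connected some trimmed $A_i$ to a non-abelian vertex (now connecting $\grp{c_i}$ to that vertex), and edges that connected $A_i$ to $v_\Gamma$ — but by Lemma \ref{lem_homogene}(\ref{it_boucle}) there is at most one edge from each such $A_i$ to $v_\Gamma$, and by Lemma \ref{lem_homogene}(\ref{it_maxZ}) its edge group is maximal cyclic in $L_V$, hence equal to $\grp{c_i}$, so after trimming this vertex $\grp{c_i}$ becomes a valence-one vertex (or is absorbed when passing to the minimal subgraph — as indicated in Figure \ref{fig_trim}). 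I would argue that any $(\cala,\calh_{W_0})$-splitting of $L_{W_0}$ extends to an $(\cala,\calh)$-splitting of $L_V$: given such a splitting, the groups $N_i$ are non-cyclic abelian hence elliptic by definition of $\calh_{W_0}$, and the edge groups $\grp{c_i}$ of the extension-of-centralizers structure are cyclic; one can then refine the splitting of $L_{W_0}$ by blowing each vertex carrying $\grp{c_i}$ back up into the extension-of-centralizers vertex $A_i = \grp{c_i}\oplus N_i$, using that $\grp{c_i}$ is malnormal (maximal abelian) wherever it sits. Since edge groups of $\Lambda$ are universally elliptic in $L_V$ and, conversely, $\grp{c_i}$ is universally elliptic in $L_{W_0}$ (it is malnormal abelian and every $(\cala,\calh_{W_0})$-tree of $L_{W_0}$ pulls back to one of $L_V$ where $\grp{c_i}$ is elliptic), the edge groups of $\Lambda_{W_0}$ are universally elliptic.

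Next I would check the vertex condition. The non-abelian vertices of $\Lambda_{W_0}$ are exactly those of $\Lambda$, with the same vertex groups (trimming only shrinks abelian vertices, and at most changes which edge groups are attached — but the edge groups attached to a non-abelian vertex are unchanged, since trimming an $A_i$ replaces it by a group still containing the relevant $\grp{c_i} \supseteq G_e$). Hence surface-type vertices stay surface type, and rigid vertices of $\Lambda$ remain rigid relative to their incident edge groups: any nontrivial $(\cala,\calh_{W_0}|_{G_v}\cup\{G_e\})$-splitting of such a $G_v$ would, since $G_v$ is unchanged and its incident edge groups are unchanged, already contradict rigidity in $L_V$ (using that $\calh|_{G_v}$ includes the relevant copies of $\Gamma$ and non-cyclic abelians, which are still captured in $L_{W_0}$). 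The abelian vertices that remain in $\Lambda_{W_0}$ are either untouched abelian vertices of $\Lambda$ or the new cyclic vertices $\grp{c_i}$ for $i$ in $\cala_0$ that were not absorbed; abelian vertex groups are automatically either rigid relative to their edge groups or of abelian type, and in any case Lemma \ref{lem_carac_JSJ} is applied only to non-surface, non-abelian-type considerations — more precisely I would note that standard JSJ decompositions allow abelian-type vertices and that the characterization goes through for them directly, or invoke Lemma \ref{lem_cylindres} to confirm the bipartite/maximality conditions are preserved.

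The main obstacle I anticipate is the bookkeeping around the edges from the trimmed $A_i$ to $v_\Gamma$ and the passage to a minimal subgraph of groups: one must check that after replacing $A_i$ by $\grp{c_i}$, any resulting valence-$\le 2$ vertices or redundant edges are collapsed correctly so that the result is still bipartite between abelian and non-abelian vertices (Lemma \ref{lem_cylindres}), and in particular that $v_\Gamma$ does not end up adjacent to another abelian vertex in a way that violates the tree-of-cylinders normal form. Using Lemma \ref{lem_homogene}(\ref{it_maxZ}) and (\ref{it_boucle}) to pin down exactly what the edge group from $A_i$ to $v_\Gamma$ is, and Lemma \ref{lem_prop_std_JSJ}(\ref{it_maxab_edge}), (\ref{it_malnormal_ab}) for the non-abelian side, should make this manageable; the cleanest route is probably to verify the conditions of Lemma \ref{lem_cylindres} directly for $\Lambda_{W_0}$ together with universal ellipticity and the rigid/surface vertex condition, then quote Lemma \ref{lem_carac_JSJ} to conclude that $\Lambda_{W_0}$ is a (standard) JSJ decomposition of $L_{W_0}$.
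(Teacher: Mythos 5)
Your overall strategy — extend an $(\cala,\calh)$-splitting of $L_{W_0}$ to one of $L_V$ by blowing $\grp{c_i}$ back up to $A_i$, conclude universal ellipticity, then invoke Lemma~\ref{lem_carac_JSJ} and Lemma~\ref{lem_cylindres} — is the same as the paper's. But there is a genuine gap in the universal ellipticity step, and it is precisely the point where the hypothesis that $\cala_0$ consists of abelian vertices \emph{adjacent to $v_\Gamma$} is used. To blow up a vertex of a given $(\cala,\calh)$-tree $\Delta$ for $L_{W_0}$ back into $A_i = \grp{c_i}\oplus N_i$, you must first know that $\grp{c_i}$ is elliptic in $\Delta$. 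The reasons you offer do not establish this: ``$N_i$ is non-cyclic abelian hence elliptic by definition of $\calh_{W_0}$'' is not meaningful, since $N_i$ is not a subgroup of $L_{W_0}$ and hence does not act on $\Delta$; ``$\grp{c_i}$ is cyclic'' does not give ellipticity; and malnormality of $\grp{c_i}$ is likewise not a reason for ellipticity. The correct observation — which you actually have within reach, since you note that $G_{e_0}=\grp{c_i}$ for the edge $e_0$ from $A_i$ to $v_\Gamma$ — is that $\grp{c_i}\subset G_{v_\Gamma}=\Gamma$, and because $\Delta$ is a splitting \emph{relative to $\Gamma$}, the subgroup $\Gamma$ is elliptic, so $\grp{c_i}$ is elliptic. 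Without this, the extension step is unjustified, and it is exactly what would fail if one tried to trim an abelian vertex not adjacent to $v_\Gamma$.

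A secondary inaccuracy: you assert that the incident edge groups of non-abelian vertices are unchanged by trimming. This is false for $v_\Gamma$ itself, because passing to the minimal subgraph removes edges from $v_\Gamma$ to any $A_i$ that was a terminal abelian vertex. The rigidity condition at $v_\Gamma$ still holds, but only because $G_{v_\Gamma}=\Gamma$ is trivially rigid in splittings taken relative to $\Gamma$; your argument ``$G_v$ and its incident edge groups are unchanged, so rigidity in $L_V$ carries over'' does not apply at $v_\Gamma$ and should be replaced by this observation (which is what the paper does).
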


\begin{rem}\label{rem_cyclic}
For each edge group $G_e$ incident on $v_\Gamma$, the maximal
abelian subgroup of $L_{W_0}$ containing $G_e$ is the cyclic group $G_e$ itself (see Lemma \ref{lem_homogene}(\ref{it_maxZ})).
\end{rem}

\begin{proof}
We first claim that every abelian splitting $L_{W_0}=\pi_1(\Delta)$ relative to $\Gamma$ and to non-cyclic abelian groups 
extends to a splitting $\hat{\Delta}$ of $L_V$ relative to $\Gamma$ and to its non-cyclic abelian groups.
Indeed, write $L_V$ as an extension
of centralizers $L_V=L_{W_0}*_{\grp{c_i}} A_i$.
To refine this splitting using $\Delta$, we need to check
that $c_i$ is elliptic in $\Delta$.
This is the case: $\grp{c_i}$ is conjugate into $\Gamma$ since $v_i$ is adjacent to $v_\Gamma$.
This proves the claim.

Since edge groups of $\Lambda$ are elliptic in $\hat \Delta$,
this implies that edge groups of $\Lambda_{W_0}$ are elliptic in $\Delta$.
This shows that $\Lambda_{W_0}$ is universally elliptic.

To deduce that $\Lambda_{W_0}$ is an abelian JSJ decomposition of $L_{W_0}$ relative to $\Gamma$ and to its non-cyclic abelian subgroups, it suffices to check that for each vertex $v$ of $\Lambda_{W_0}$, $G_v$ has a trivial JSJ decomposition relative to $\Gamma$ and to its incident edge groups
by Lemma \ref{lem_carac_JSJ}.
This is clear for abelian vertices, and also for non-abelian vertices having the same
incident edges in $\Lambda_{W_0}$ and $\Lambda$. 

It may happen that some edges of $\Lambda$ are not in $\Lambda_{W_0}$ because of the need to pass to the minimal splitting. To make the graph of groups minimal, one removes edges $e$ with a valence $1$ endpoint $v$ such that $G_e=G_v$, and repeat this operation. In our situation, 
these edges are exactly the edges of $\Lambda$
joining $v_\Gamma$ to a terminal abelian vertex. Once these edges have been deleted, then
the graph of groups is minimal because the vertex group at $v_\Gamma$ is non-abelian.
We conclude that the only non-abelian vertex of $\Lambda_{W_0}$ whose peripheral structure
may have changed is $v_\Gamma$.
Since we are looking at splittings relative to $\Gamma$,
this concludes the proof that $\Lambda_{W_0}$ 
is a JSJ decomposition.

One easily checks that $\Lambda_{W_0}$ is actually a standard JSJ decomposition of $L_{W_0}$
by applying Lemma \ref{lem_cylindres} to the complement of the set of edges with trivial stabilizer.
\end{proof}

\subsection{Coordinate group of the double}

The goal of this subsection is to show that 
$L_{W_0\times W_0}$ isomorphic to $L_{W_0}*_\Gamma L_{W_0}$
(Corollary \ref{cor_double_trim}).
Note that this would not be true if we had not trimmed abelian groups neighbouring $v_\Gamma$.
In view of Lemma \ref{lem_direct_product},
we rely on the following lemma.

\begin{lem}\label{lem_double}
   Let $L$ be the function group of an irreducible variety $W$. 
   Assume that $L$ has a cyclic splitting $\Theta$ such that
   \begin{enumerate}
       \item there is a vertex $v_\Gamma$ with vertex group $\Gamma$
    \item there is no edge with non-trivial stabilizer joining $v_\Gamma$ to itself 
    \item $\Theta$ is $1$-acylindrical: every non-trivial element fixes at most one edge.
   \end{enumerate}
Then the coordinate group of $W\times W$ is the double $L*_{\Gamma} \bar L$
where $\bar L$ is an isomorphic copy of $L$.
\end{lem}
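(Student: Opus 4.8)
The plan is to apply Lemma \ref{lem_direct_product}, which tells us that $L_{W\times W}$ is the largest residually $\Gamma$ quotient of $L *_\Gamma \bar L$. So the real task is to show that $L *_\Gamma \bar L$ is \emph{already} residually $\Gamma$ (equivalently, since it is a $\Gamma$-limit group setup, that it is fully residually $\Gamma$ — but residual is what we need for the quotient to be trivial). The natural tool here is the theory of combination/acylindrical amalgams: a one-acylindrical graph of groups whose vertex groups are all fully residually $\Gamma$ and whose edge groups are cyclic (hence malnormal-ish in the relevant sense) has fundamental group fully residually $\Gamma$. I would set up $L *_\Gamma \bar L$ as the fundamental group of a graph of groups $\hat\Theta$ obtained by gluing two copies of $\Theta$ along the single vertex $v_\Gamma$ (identifying the two copies of $\Gamma$). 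The point of hypotheses (1) and (2) is precisely to make this gluing legitimate and to control the new vertex $v_\Gamma$: its vertex group in $\hat\Theta$ is still $\Gamma$, and the edges incident to it (coming from both copies of $\Theta$) still have cyclic stabilizers with no loop at $v_\Gamma$.

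The key steps, in order, are: (i) realize $L$ as $\pi_1(\Theta)$ and build $\hat\Theta$ by wedging two copies of $\Theta$ at $v_\Gamma$, checking that $\pi_1(\hat\Theta) = L *_\Gamma \bar L$; (ii) verify that $\hat\Theta$ is still cyclic (abelian edge groups, in fact infinite cyclic away from trivial ones) and still $1$-acylindrical — this uses hypothesis (3) for each copy of $\Theta$ separately, and uses hypothesis (2) to guarantee that no nontrivial element can fix an edge in one copy and an edge in the other copy, since such an element would have to lie in $\Gamma = G_{v_\Gamma}$ and fix two edges incident to $v_\Gamma$, and the $1$-acylindricity of $\Theta$ (together with no loop at $v_\Gamma$) forbids that across the two copies; (iii) invoke a combination theorem for fully residually $\Gamma$ groups (of Sela / Kharlampovich–Myasnikov type, or the Bass–Serre theoretic version for acylindrical graphs of groups with malnormal edge groups) to conclude that $\pi_1(\hat\Theta)$ is fully residually $\Gamma$, hence residually $\Gamma$; (iv) conclude by Lemma \ref{lem_direct_product} that the surjection $L *_\Gamma \bar L \to L_{W\times W}$ is an isomorphism, and rename $\bar L$ as an isomorphic copy of $L$.

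The main obstacle I expect is step (iii): getting a clean combination statement in exactly the right form. The vertex groups of $\hat\Theta$ are the vertex groups of $\Theta$ (which are retracts of $L$, hence fully residually $\Gamma$) together with $\Gamma$ itself, and the edge groups are cyclic; the subtlety is that $1$-acylindricity is the natural hypothesis under which "fully residually $\Gamma$" passes to fundamental groups of graphs of groups, but one must be careful that the edge groups sit in the vertex groups in a way compatible with the discriminating families — concretely, one wants the cyclic edge groups to be "separated" by the homomorphisms to $\Gamma$, which is where one uses that $L$ (being the function group of an irreducible variety) is fully residually $\Gamma$ by Lemma \ref{lem_discriminating_V0}, and that $\Gamma$ is torsion-free hyperbolic, hence CSA, so that cyclic subgroups are malnormal in their maximal abelian (here cyclic) overgroups. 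Once the acylindricity and malnormality bookkeeping is in place, the combination theorem applies and the rest is formal.
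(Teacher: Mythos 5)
Your overall strategy matches the paper's: reduce via Lemma \ref{lem_direct_product} to showing that $L *_\Gamma \bar L$ is (fully) residually $\Gamma$, realize the double as the fundamental group of the graph of groups $\Theta^2$ obtained by wedging two copies of $\Theta$ at $v_\Gamma$, and invoke a combination theorem. However, there is a genuine gap in step (ii): your claim that $\Theta^2$ is still $1$-acylindrical is false. Take an edge $e$ of $\Theta$ incident on $v_\Gamma$ with non-trivial edge group $G_e = \grp{c} \subset \Gamma$. In $\Theta^2$, the copy of $e$ inside $\Theta$ and the copy of $e$ inside $\bar\Theta$ are two distinct edges incident on the same vertex $v_\Gamma$, and both have edge group $\grp{c}$ (since $\Gamma = G_{v_\Gamma}$ is shared). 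So $c$ fixes both edges, violating $1$-acylindricity. Your argument that ``the $1$-acylindricity of $\Theta$ forbids that across the two copies'' is not valid: $1$-acylindricity of $\Theta$ only constrains pairs of edges within a single copy, and says nothing about an element fixing the $L$-copy of $e$ and the $\bar L$-copy of $e$ simultaneously. What is actually true, and what the paper proves, is that $\Theta^2$ is $2$-acylindrical: a segment of length $3$ either has both interior vertices over $v_\Gamma$ (forcing the middle edge to project to a loop at $v_\Gamma$, hence to have trivial stabilizer by hypothesis (2)), or it has two consecutive edges lying in one copy, to which $1$-acylindricity of $\Theta$ applies.

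This matters because it changes which combination theorem you can appeal to, and it is where your step (iii) — which you correctly flag as the main obstacle — actually needs a different ingredient. The paper does not feed ``vertex groups fully residually $\Gamma$ plus $1$-acylindricity'' into a black-box combination theorem. Instead, it uses the canonical retraction $\phi : L *_\Gamma \bar L \to L$ (identity on $L$, $\bar g \mapsto g$), observes that $\phi$ is injective on every vertex group of $\Theta^2$ and that non-trivial edge groups are maximal abelian in the adjacent vertex groups (this uses the $1$-acylindricity of the original $\Theta$), checks that $\Theta^2$ is $2$-acylindrical and $D$ is CSA, and then cites a combination result (Sela, or Cashen--Guirardel) in which the existence of such a retraction to a known $\Gamma$-limit group is the driving hypothesis. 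Your proposal does not produce such a retraction, and merely knowing each vertex group is fully residually $\Gamma$ separately is not enough: you need homomorphisms to $\Gamma$ that discriminate the whole group, and the retraction $\phi$ composed with discriminating morphisms $L \to \Gamma$ is exactly what supplies them. So the fix is twofold: replace ``$1$-acylindrical'' by ``$2$-acylindrical'' in step (ii), and add the retraction $\phi$ and its injectivity on vertex groups as the essential input to step (iii).
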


\begin{proof} 
Denote by $D$ the double $L*_\Gamma \bar L$.
By Lemma \ref{lem_direct_product},
the coordinate group of $W\times W$ is the 
maximal residually $\Gamma$ quotient of $D$.
Thus, it suffices to prove that $D$ is a $\Gamma$-limit group.

For any $g\in L$, denote by $\bar g$ its copy in $\bar L\subset D$.
Let $\phi:L*_\Gamma \bar L\ra L$ be the morphism that is the identity on the left factor $L$,
and maps $\bar g\in \bar L$ to $g\in L$.

One can write $D$ as a the fundamental group of the graph of groups $\Theta^2$
obtained by gluing $\Theta$ with a copy $\bar \Theta$ of itself along the vertex $v_\Gamma$.
One easily checks that $\Theta^2$ is $2$-acylindrical:
any segment of length 3 in the Bass-Serre tree of $\Theta^2$
has a subsegment of length $2$ in a Bass-Serre tree of $\Theta$ or $\bar \Theta$ by Assumption 2, so one concludes using $1$-acylindricity of $\Theta$ and $\bar \Theta$.

Non-trivial edge groups of $\Theta$ are maximal abelian in $L$ by $1$-acylindricity of $\Theta$.
By \cite[Corollary A.8]{CG_compactifying}, $D$ is CSA.

The morphism $\phi$ is 
injective on all 
vertex groups of $\Theta^2$.
Since $\Theta^2$ is a graph of groups whose non-trivial edge groups are maximal abelian in neighbouring vertex groups, and since $L$ is a $\Gamma$-limit group, 
it follows that $D$ is a $\Gamma$-limit group by \cite[Theorem 1.31]{Sela_diophantine7}, or \cite[Prop 4.11]{CG_compactifying} which adapts immediately to the context of $\Gamma$-limit groups.
\end{proof}

\begin{cor}\label{cor_double_trim}
Let $V$ be a \conn\ algebraic group, and $W$ its $\cala_0$-trimming as in Lemma \ref{lem_trimA0}.
    
    Then $L_{{W_0}\times {W_0}}=L_{W_0}*_\Gamma L_{W_0}$.
\end{cor}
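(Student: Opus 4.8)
The plan is to deduce Corollary \ref{cor_double_trim} directly from Lemma \ref{lem_double} applied to $L=L_{W_0}$, combined with Lemma \ref{lem_direct_product}. First I would note that by Lemma \ref{lem_trimA0}, the group $L_{W_0}$ carries a standard JSJ decomposition $\Lambda_{W_0}$. I would then take for $\Theta$ the cyclic splitting of $L_{W_0}$ obtained from $\Lambda_{W_0}$ by collapsing every edge with trivial stabilizer (equivalently, keeping only the black edges) and, separately, dealing with the free factors: more precisely, $\Theta$ should be the subgraph of groups of $\Lambda_{W_0}$ consisting of the one-ended part around $v_\Gamma$, with all trivially-stabilized edges collapsed so that the resulting splitting has no trivial edge groups at all. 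Since the corollary concerns $L_{W_0}$, whose function group is that of an irreducible variety (as $W_0$ is irreducible, being homogeneous and cut out inside $V$ — this follows from Lemma \ref{lem_discriminating_V0} since $L_{W_0}$ is a subgroup of the fully residually $\Gamma$ group $L_V$, hence fully residually $\Gamma$), Lemma \ref{lem_double} applies once I check its three hypotheses.

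The three hypotheses of Lemma \ref{lem_double} are checked as follows. Hypothesis (1), the existence of a vertex $v_\Gamma$ with vertex group exactly $\Gamma$, is Lemma \ref{lem_homogene}(\ref{it_vGamma}) applied to $W_0$ (which is homogeneous; alternatively one invokes that the trimming does not alter the group $G_{v_\Gamma}=\Gamma$, which is explicit in the construction of $\Lambda_{W_0}$ in Lemma \ref{lem_trimA0}). Hypothesis (2), that no non-trivially-stabilized edge joins $v_\Gamma$ to itself, is precisely Lemma \ref{lem_homogene}(\ref{it_boucle}) applied to $W_0$ — or again, transported along the trimming. Hypothesis (3), $1$-acylindricity of $\Theta$, is the key point: after collapsing trivial edges, $\Theta$ is bipartite between abelian and non-abelian vertices (Lemma \ref{lem_prop_std_JSJ}(\ref{it_bip})), with edge groups maximal abelian in the adjacent non-abelian vertex group (Lemma \ref{lem_prop_std_JSJ}(\ref{it_maxab_edge})) and no two incident edges at a non-abelian vertex conjugate (Lemma \ref{lem_prop_std_JSJ}(\ref{it_malnormal_ab})); moreover, by the trimming, the abelian vertices adjacent to $v_\Gamma$ have become cyclic and equal to their incident edge group(s), and by Remark \ref{rem_cyclic} the edge groups incident on $v_\Gamma$ are their own maximal abelian subgroups. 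A standard argument then shows that in such a splitting the stabilizer of any path of length $2$ in the Bass–Serre tree is trivial: a non-trivial element fixing two consecutive edges would lie in the intersection of two distinct (up to conjugacy) maximal abelian subgroups of a common non-abelian vertex group, and CSA forces this intersection to be trivial; at an abelian vertex the two incident edge groups generate the whole vertex group by construction of the $\grp{c_i}$, so again no issue arises. Once the hypotheses are verified, Lemma \ref{lem_double} gives that the coordinate group of $W_0\times W_0$ is $L_{W_0}*_\Gamma \bar L_{W_0}\cong L_{W_0}*_\Gamma L_{W_0}$, which is the assertion.

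The main obstacle I expect is a bookkeeping one rather than a conceptual one: making sure that after passing to the minimal subgraph of groups in the construction of $\Lambda_{W_0}$ (Lemma \ref{lem_trimA0}) and then collapsing the trivially-stabilized edges to obtain $\Theta$, the resulting splitting genuinely has \emph{no} trivial edge groups and is genuinely $1$-acylindrical — in particular one must handle the free-product part $L_{W_0}=L^0_{W_0}*\F_r$ correctly, since Lemma \ref{lem_double} is stated for a splitting with a $\Gamma$-vertex and is most naturally applied with $\Theta$ the canonical JSJ of the one-ended factor $L^0_{W_0}$ while the free factor is dealt with separately via Lemma \ref{lem_direct_product} (using that the double of a free group with amalgamation still behaves well). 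A clean way to avoid this subtlety is to reduce, via Proposition \ref{prop_freeprod} and an application of Lemma \ref{lem_direct_product} to split off the free part, to the one-ended case, and there invoke Lemma \ref{lem_double} with $\Theta$ the canonical JSJ tree of cylinders, whose properties (bipartiteness, maximal abelian edge groups, $1$-acylindricity after the trimming) are exactly packaged in Lemmas \ref{lem_cylindres}, \ref{lem_prop_std_JSJ} and \ref{lem_homogene}.
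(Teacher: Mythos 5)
Your overall plan is right — invoke Lemma \ref{lem_double} to show that $L_{W_0}*_\Gamma L_{W_0}$ is a $\Gamma$-limit group and combine with Lemma \ref{lem_direct_product} — and you correctly identify $1$-acylindricity of $\Theta$ as the pivot. But the splitting $\Theta$ you feed into Lemma \ref{lem_double} is the wrong one, and this creates a genuine gap. You take $\Theta$ to be $\Lambda_{W_0}$ with the trivially-stabilized edges collapsed (or, in your ``clean'' variant, the canonical JSJ tree of cylinders of the one-ended factor). This tree is \emph{not} $1$-acylindrical in general. At this point of the paper, Theorem \ref{thm_norigid} is not yet available (it is proved \emph{using} this corollary), so $\Lambda_{W_0}$ may have rigid and surface vertices other than $v_\Gamma$, and hence abelian vertices that are not adjacent to $v_\Gamma$ and are therefore untouched by the $\cala_0$-trimming. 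If such an abelian vertex $A$ carries two incident edges, then by Lemma \ref{lem_homogene}(\ref{it_ab}) both edge groups lie in a single cyclic subgroup of $A$, so they intersect non-trivially and a non-trivial element fixes both edges. The same problem occurs at a \emph{trimmed} abelian vertex $\grp{c_i}$ that, besides its edge to $v_\Gamma$ (whose edge group is all of $\grp{c_i}$ by Lemma \ref{lem_homogene}(\ref{it_maxZ})), carries another edge to a different non-abelian vertex. Your sentence ``at an abelian vertex the two incident edge groups generate the whole vertex group by construction of the $\grp{c_i}$, so again no issue arises'' is the weak point: it only addresses trimmed abelian vertices, and even there ``generating the vertex group'' does not imply the two edge stabilizers meet trivially — two cyclic subgroups of a cyclic group that generate it can still have large intersection.

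The paper sidesteps this by taking a different $\Theta$: \emph{collapse every edge of $\Lambda_{W_0}$ that is not adjacent to $v_\Gamma$}. This forces all abelian vertices (trimmed or not) to be absorbed into larger non-abelian vertex groups, leaving only the star of edges at $v_\Gamma$. Then every non-trivial edge group is, by Remark \ref{rem_cyclic}, its own maximal abelian subgroup of $L_{W_0}$, so two edges in the Bass–Serre tree with a common non-trivial stabilizer have \emph{equal} maximal cyclic stabilizers; malnormality pushes this down to two distinct edges of the quotient graph incident on $v_\Gamma$ with conjugate edge groups, contradicting Lemma \ref{lem_homogene}(\ref{it_boucle}). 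That is the $1$-acylindricity argument you need, and it does not go through for the splitting you chose. A secondary point: you also do not need to separate out the free factor or collapse the blue edges — Lemma \ref{lem_double} is stated for a cyclic splitting, which allows trivial edge groups, and $1$-acylindricity is a condition only on non-trivial elements — so the paper's $\Theta$ handles the free part for free, whereas your reduction adds unnecessary complications.
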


\begin{proof}
Let $\Lambda_{W_0}$ be the decomposition of $L_{W_0}$ as in Lemma \ref{lem_trimA0}.
    Let $\Theta$ be the graph of groups obtained from $\Lambda_{W_0}$
    by collapsing all edges that are not adjacent to $v_\Gamma$.

We apply Lemma \ref{lem_double}.
By Lemma \ref{lem_homogene} (\ref{it_vGamma})
and recalling that $\Lambda_{W_0}$ is bipartite,
we get that the first two assumptions of the lemma hold. 
Thus it suffices to check that $\Theta$ is $1$-acylindrical.

To prove this
consider
 $e\neq e'$ two edges with common vertex $w$ in the Bass-Serre tree of $\Theta$ and with $G_e\cap G_{e'}\neq \{1\}$. 
 Then $G_e=G_{e'}$ by Remark \ref{rem_cyclic}. 
The two edges $e$ and $e'$ are not in the same $G_w$-orbit because
being maximal abelian in $L_{W_0}$, $G_e$ is malnormal in $G_w$. 
Downstairs, the image of $w$ need not be equal to $v_\Gamma$
but the images of $e$ and $e'$ in $\Theta$
are two distinct edges incident on $v_\Gamma$ 
whose edge groups are conjugate into $L_{W_0}$, 
contradicting Lemma \ref{lem_homogene} (\ref{it_boucle}).
This shows that $\Theta$ is $1$-acylindrical and concludes the proof.
\end{proof}

\subsection{Rigid vertices in the JSJ decomposition}

\begin{thm}\label{thm_norigid} Let $V$ be a \conn\ algebraic group, 
and $\Lambda_V$ be a standard JSJ decomposition of $L_V$.

Then $\Lambda_V$ has no non-abelian rigid vertex other than the vertex $v_\Gamma$ with vertex group $\Gamma$, and no vertex of surface type whose corresponding surface has finite mapping class group.
\end{thm}

\begin{rem}
The only hyperbolic surfaces with finite mapping class group
are the pair of pants and the twice punctured projective plane (see e.g.~\cite[\S 5.1.4]{GL_JSJ}). 
A vertex of surface type whose underlying surface is a pair of pants
is actually rigid; but this is not not the case for the
 twice punctured projective plane
 which has splittings dual to curves bounding a Möbius band.
\end{rem}

\begin{proof}[Proof of Theorem \ref{thm_norigid}]
    Let $\Lambda$ be a JSJ decomposition of $L_V$. 
    
    Let $L_{W_0}$ and $\Lambda_{W_0}$ be obtained by trimming $\cala_0$ as in Lemma \ref{lem_trimA0}. 
    It suffices to show that $\Lambda_{W_0}$
    has no non-abelian rigid vertex group distinct from $v_\Gamma$.

    By Corollary \ref{cor_double_trim}, $L_{{W_0}\times {W_0}}$ is the double $D=L_{W_0}*_\Gamma \bar L_{W_0}$. 

    The multiplication law $\mu:W_0\times W_0\ra W_0$ induces a dual morphism $\mu^*:L_{W_0}\ra D$.
    Denote by $h_e:L_{W_0}\ra \Gamma$ the $\Gamma$-homomorphism corresponding to the identity element $e\in {W_0}$.
    The map $\Psi:w\in {W_0}\mapsto (e,w)\in {W_0}\times {W_0}$ induces a dual morphism $\Psi^*: D \ra L_{W_0}$.
    It coincides with $h_e$ on $L_{W_0}$, and maps $\bar g\in \bar L_{W_0}$ to $g$.
    We will also use the symmetric map $\Psi':w\mapsto (w,e)$.
    
    Since $\mu\circ\Psi=\id_{W_0}$, we get $\Psi^*\circ \mu^*=\id_{L_{W_0}}$ so $\mu^*$ is injective.
    Let $\Lambda_D$ be the splitting of $D$ obtained by gluing two copies $\Lambda_{W_0},\bar\Lambda_{W_0}$ of $\Lambda_{{W_0}}$ along $v_\Gamma$.       
    
    Fix a non-abelian rigid vertex group $R$ of $\Lambda_{W_0}$. The monomorphism $\mu^*$ gives an action of $L_{W_0}$ on the Bass-Serre tree of $\Lambda_D$. In this action, $R$ is elliptic thus
    $\mu^*(R)$ is conjugate into a vertex group $G_u$ of $\Lambda_D$.
    If $u$ corresponds to a vertex coming from $\Lambda_{W_0}$ (allowing $u=v_\Gamma$),
    then $\Psi^*(\mu^*(R))$ is conjugate into $h_e(G_u)\subset \Gamma$.
    Since $\Psi^*\circ\mu^*=\id_{L_{W_0}}$, we get that $R$ is conjugate into $\Gamma$.
    If $u$ comes from $\bar\Lambda_{W_0}$ we proceed similarly using $\Psi'^*$ instead of $\Psi^*$.

This rules out all non-abelian rigid vertex groups except $v_\Gamma$, 
including surface type vertices whose underlying surface is a pair of pants.

There remains to prove that there is no surface-type vertex whose underlying
surface is a twice punctured projective plane. If $R$ is such a vertex group, the argument above apply if $\mu^*(R)$ is elliptic in $\Lambda_D$. 
If not, the splitting induced on $R$ 
is dual to a curve bounding a Möbius band, hence corresponds to
an amalgamated product $R' *_{\grp{z^2}} \grp{z}$.
The argument above shows that $R'$ lies in $\Gamma$ up to conjugacy. 
So we may assume that $R' \subset \Gamma$ and in particular 
$z^2\in \Gamma$. By Lemma \ref{lem_homogene}(3),
we get that $z\in \Gamma$, so $R\subset \Gamma$.
This concludes the proof of the theorem.
\end{proof}

\section{Bounded stretch and application to vertices of surface type} \label{sec_bounded_stretch}

\subsection{Length and stretch}

Let $L$ be a $\Gamma$-group with a finite $\Gamma$-generating set $X_1, \ldots, X_n$, \ie such that $L=\grp{\Gamma,X_1,\dots,X_n}$. 
\begin{dfn}
Given $f \in L$, we define the length $\ell (f)$ of $f$ to be the minimal length of a word on the alphabet $\{X^{\pm 1}_1, \ldots, X^{\pm 1}_n\}\cup (\Gamma\setminus\{1\})$ representing $f$.
\end{dfn}

\begin{rem}
The length of $f$ should be thought of as an analogue of the degree of a polynomial
(although the length of $aX_1X_2b X_2^{-2}$ is 6 while its degree should rather be 4).
\end{rem}

\begin{dfn} Let $\alpha \in \Aut_{\Gamma}(L)$. We define the \emph{stretch} of $\alpha$ 
(with respect to the $\Gamma$-generating set $X_1,\dots,X_n$ of $L$)
as
$$\Lip (\alpha) = \sup_{f \in L\setminus\{1\}} \frac{\ell(\alpha(f))}{\ell(f)}$$
$$=\max \{1, \ell(\alpha(X_1)),\dots,\ell(\alpha(X_n))\}$$
\end{dfn}

\begin{rem}
The equivalence between the two definitions follows from the fact
that $\Gamma\cup \{X_1,\dots,X_n\}$ is a generating set of $L$,
so the stretch of $\alpha$ is bounded by the length of 
the images of these elements.
\end{rem}

We say that an algebraic group $G$ acts algebraically on a variety $V$ if the
map $G\times V\ra V$ is an algebraic map.
Given $g\in G$, we denote by $\tau_g:v\in V\mapsto g.v\in V$,
and by $\tau_g^*\in\Aut_\Gamma(L_V)$ the automorphism
sending $f\in L_V$ to $f \circ \tau_g: v\mapsto f(g.v)$.
For example, if $V$ is an algebraic group, we will consider the action of $G=V$ on itself by right multiplication, and of $G=V \times V$ on $V$ by two-sided multiplication.

The following lemma is a key observation. 
It is analogous to the classical fact of algebraic geometry saying that
if $\bbG\actson M$ is an algebraic action of an algebraic group on an algebraic variety $M$,
for every function $f$ on $M$, the $\bbG$-orbit of $f$ spans a finite dimensional vector space
(\cite{Brion_luminy}).
If $M$ is the affine space, this means that the $\bbG$-orbit of a polynomial $f$ has bounded degree.

\begin{lem}[Bounded stretch]\label{lem_alg_action}
    Let $G$ be an algebraic group acting algebraically on an algebraic variety $V$. 
    
    Then
    there exists a constant $C>0$ such that for all $g\in G$, $\Lip(\tau_g^*)\leq C$.
    
    Equivalently, for every function $f\in L_V$, and every $g\in G$, $\ell(f\circ \tau_g)\leq C\ell(f)$.
\end{lem}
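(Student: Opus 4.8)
The plan is to unwind the definitions so that the statement becomes a purely combinatorial fact about word maps. Since $G\times X\to X$ is an algebraic map, writing $x=(x_1,\dots,x_m)\in X\subset\Gamma^m$ and $g=(y_1,\dots,y_k)\in G\subset\Gamma^k$, there are $\Gamma$-words $u_1,\dots,u_m\in\Gamma*\grp{y_1,\dots,y_k,x_1,\dots,x_m}$ so that
$$g.x=\bigl(u_1(g,x),\dots,u_m(g,x)\bigr)$$
for all $g\in G$, $x\in X$. Let $D=\max_j \ell(u_j)$ be the maximal length of these words in the variables $x_1^{\pm1},\dots,x_m^{\pm1}$ (counting $\Gamma$-letters as length $1$), and set $C=D$. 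The coordinate functions $X_1,\dots,X_m\in L_X$ form the $\Gamma$-generating family with respect to which $\ell$ and $\Lip$ are computed, so by the second description of the stretch it suffices to bound $\ell(\tau_g^*(X_j))$ for each $j$ and each $g\in G$.

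Now I would compute $\tau_g^*(X_j)$ directly. By definition $\tau_g^*(X_j)=X_j\circ\tau_g$ is the function sending $x\in X$ to the $j$-th coordinate of $g.x$, which by the displayed formula equals $u_j(g,x)$. Here $u_j(g,x)$, for our \emph{fixed} $g$, is a word in $x_1^{\pm1},\dots,x_m^{\pm1}$ with constants in $\Gamma$: namely substitute the constants $y_1=g_1,\dots,y_k=g_k\in\Gamma$ into $u_j$. This substitution can only decrease (or keep equal) the length, since each occurrence of a letter $y_i^{\pm1}$ becomes either a $\Gamma$-letter (length $1$) or the empty word (if $g_i=1$), and adjacent $\Gamma$-letters may further collapse. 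Hence $\tau_g^*(X_j)$ is represented by a word of length at most $\ell(u_j)\le D=C$ in the alphabet $\{X_1^{\pm1},\dots,X_m^{\pm1}\}\cup(\Gamma\setminus\{1\})$, so $\ell(\tau_g^*(X_j))\le C$. Therefore $\Lip(\tau_g^*)=\max\{1,\ell(\tau_g^*(X_1)),\dots,\ell(\tau_g^*(X_m))\}\le C$, uniformly in $g$. The equivalent ``for every function $f$'' formulation is then immediate from submultiplicativity of $\ell$ under the generating family, exactly as in the Remark equating the two definitions of stretch.

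The one point that requires a little care — and I expect it to be the only genuine subtlety — is making sure that the bound $C$ does not secretly depend on $g$. The potential worry is that specializing $y_i\mapsto g_i$ could \emph{increase} length, but it cannot: in the free product $\Gamma*\grp{x_1,\dots,x_m}$, replacing a free generator $y_i$ by a constant in $\Gamma$ is a $\Gamma$-homomorphism $\Gamma*\grp{y,x}\to\Gamma*\grp{x}$ sending generators to elements of length $\le 1$, hence it is $1$-Lipschitz for the corresponding word lengths. This is precisely the content of the Remark following the definition of stretch, applied to an explicit homomorphism rather than an automorphism; the argument is identical. Once this is noted, the whole proof is a two-line unwinding, and no JSJ or limit-group input is needed — the lemma is really just the observation that an algebraic action is given by a finite list of words of bounded size.

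\begin{proof}
Since the action map $G\times X\ra X$ is algebraic, there exist $\Gamma$-words $u_1,\dots,u_m\in\Gamma*\grp{y_1,\dots,y_k,x_1,\dots,x_m}$ (where $m$ is the ambient dimension of $X$ and $k$ that of $G$) such that $g.x=(u_1(g,x),\dots,u_m(g,x))$ for all $g\in G$, $x\in X$. Let $C=\max_j\ell_x(u_j)$, where $\ell_x(u_j)$ denotes the length of $u_j$ as a word in $x_1^{\pm1},\dots,x_m^{\pm1}$ with constants in $\Gamma$ (each constant counting for $1$). We claim $\Lip(\tau_g^*)\le C$ for every $g\in G$.

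By the second expression for the stretch, it is enough to bound $\ell(\tau_g^*(X_j))$ for each coordinate function $X_j\in L_X$. Now $\tau_g^*(X_j)=X_j\circ\tau_g$ sends $x\in X$ to the $j$-th coordinate of $g.x$, namely $u_j(g,x)$. Fixing $g=(g_1,\dots,g_k)$ and substituting the constants $y_i\mapsto g_i\in\Gamma$ into $u_j$ produces a word $\tilde u_j^{\,g}\in\Gamma*\grp{x_1,\dots,x_m}$ representing $\tau_g^*(X_j)$. This substitution is induced by the $\Gamma$-homomorphism $\Gamma*\grp{y_1,\dots,y_k,x_1,\dots,x_m}\ra \Gamma*\grp{x_1,\dots,x_m}$ sending each $y_i$ to $g_i$ (an element of length $\le 1$) and fixing the $x_j$; as in the remark following the definition of stretch, such a map does not increase word length. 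Hence $\ell(\tau_g^*(X_j))\le \ell(\tilde u_j^{\,g})\le \ell_x(u_j)\le C$.

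Therefore $\Lip(\tau_g^*)=\max\{1,\ell(\tau_g^*(X_1)),\dots,\ell(\tau_g^*(X_m))\}\le C$ for all $g\in G$. The equivalent statement $\ell(f\circ\tau_g)\le C\,\ell(f)$ for all $f\in L_X$ follows immediately, since writing $f$ as a word of length $\ell(f)$ in $\{X_1^{\pm1},\dots,X_m^{\pm1}\}\cup(\Gamma\setminus\{1\})$ and applying $\tau_g^*$ replaces each $X_j^{\pm1}$ by a word of length $\le C$ and fixes the $\Gamma$-letters.
\end{proof}
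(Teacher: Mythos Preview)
Your proof is correct and follows essentially the same route as the paper: write the action map by a fixed tuple of $\Gamma$-words $u_1,\dots,u_m$, observe that $\tau_g^*(X_j)$ is obtained from $u_j$ by specializing the $G$-variables to constants in $\Gamma$, and note that this specialization cannot increase word length. Your version spells out the $1$-Lipschitz property of the substitution more explicitly than the paper does, but the argument is the same; the only minor wrinkle is the notation $\ell_x(u_j)$, which is slightly ambiguous since $u_j$ still involves the $y$-variables---it would be cleaner to just take the length of $u_j$ in the full alphabet $\{x_i^{\pm1},y_j^{\pm1}\}\cup(\Gamma\setminus\{1\})$, as the paper implicitly does.
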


\begin{proof}
View $G$ (resp $V$) as a subset of $\Gamma^n$ (resp $\Gamma^p$).
Given $g\in G$ (resp.~$v\in V$), we will write $g=(g_1,\dots,g_n)$ (resp.\
$v=(v_1,\dots,v_p)$ 
its coordinates in $\Gamma^n$ (resp. $\Gamma^p$).
Denote by $X_1,\dots,X_p:V\ra \Gamma$ the 
coordinate functions -- sending $(v_1,\dots,v_p)\in V$ to $v_i$.

The action map $\alpha:G\times V\ra V$ can be written as $(\alpha_1,\dots,\alpha_p)$
where each $\alpha_i$ is a word map on $n+p$ variables
associated to some $\Gamma$-word $\tilde \alpha_i\in \Gamma*\grp{z_1,\dots,z_n,x_1,\dots, x_p}$. In other words,
$$\alpha_i:(g_1,\dots,g_n,v_1,\dots,v_p)\mapsto \tilde \alpha_i(g_1,\dots,g_n,v_1,\dots,v_p).$$

Given $g\in G$, the map $\tau_g:V\ra V$ is defined by $\tau_g(.)=\alpha(g,.)$.
Thus, 
$\tau_g^*(X_i)=X_i\circ\tau_g=X_i(\alpha(g,.))=\alpha_i(g,.)=
\tilde \alpha_i(g_1,\dots,g_n,X_1,\dots,X_p)\in L_V$.
Since by definition the elements of $g_i\in\Gamma$ have length at most 1 in $L_V$,
$\ell(\tau_g^*(X_i))$ is bounded by the length  of $\tilde \alpha_i$ as a $\Gamma$-word in $n+p$ variables.
This bound does not depend on $g$, which proves the lemma.
\end{proof}

Given an element $g$ acting isometrically on a metric space $Y$, 
we denote by 
$||g||_Y=\inf_{y\in Y} d(y,gy)$ its translation length.
By the triangle inequality, we obtain:
\begin{lem}\label{lem_bound_algebraic}
     Let $G$ be an algebraic group acting algebraically on an algebraic variety $V$. Consider an action of $L_V$ by isometries on a metric space $Y$, such that $\Gamma$ has bounded orbits.

Then there exists a constant $K$ such that
for all $f\in L_V$ and 
 $g\in G$, $||\tau_g^*(f)||_Y \leq K \ell(f)$.
\end{lem}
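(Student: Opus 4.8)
The plan is to deduce Lemma~\ref{lem_bound_algebraic} from the bounded stretch lemma (Lemma~\ref{lem_alg_action}) together with the fact that an action of $L_X$ on a metric space $Y$ in which $\Gamma$ has bounded orbits is automatically ``Lipschitz'' with respect to the word length $\ell$ on $L_X$ defined above. More precisely, first I would fix a basepoint $y_0 \in Y$ and a constant $D \geq 1$ such that $d(y_0, \gamma y_0) \leq D$ for all $\gamma \in \Gamma$ (possible since $\Gamma$ has bounded orbits), and also such that $d(y_0, X_i y_0) \leq D$ for each of the finitely many $\Gamma$-generators $X_1, \ldots, X_n$ of $L_X$. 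Since every $f \in L_X$ can be written as a word of length $\ell(f)$ on the alphabet $\{X_1^{\pm1}, \ldots, X_n^{\pm1}\} \cup (\Gamma \setminus \{1\})$, a telescoping estimate along that word using the triangle inequality and the fact that $L_X$ acts by isometries gives $d(y_0, f y_0) \leq D\,\ell(f)$ for all $f \in L_X$.

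Next I would combine this with Lemma~\ref{lem_alg_action}. That lemma provides a constant $C$ such that $\ell(\tau_g^*(f)) \leq C\,\ell(f)$ for all $f \in L_X$ and all $g \in G$. Therefore
$$
\|\tau_g^*(f)\|_Y \leq d\bigl(y_0,\tau_g^*(f)\,y_0\bigr) \leq D\,\ell\bigl(\tau_g^*(f)\bigr) \leq D C\,\ell(f),
$$
where the first inequality is just the definition of translation length as an infimum, the second is the telescoping estimate above applied to the element $\tau_g^*(f) \in L_X$, and the third is the bounded stretch lemma. Setting $K = DC$ finishes the proof.

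I do not anticipate a genuine obstacle here: the statement is a soft consequence of bounded stretch, and the only mild point requiring care is checking that the telescoping estimate is uniform, i.e.\ that the constant $D$ can be chosen once and for all and does not depend on $f$. This is immediate because $D$ only needs to bound the finitely many displacements $d(y_0, X_i y_0)$ and the displacements $d(y_0,\gamma y_0)$ over $\gamma\in\Gamma$, and the latter are uniformly bounded precisely by the hypothesis that $\Gamma$ has bounded orbits in $Y$. One should also note that $\tau_g^*$ is a $\Gamma$-automorphism of $L_X$ (as recalled before Lemma~\ref{lem_alg_action}), so $\tau_g^*(f)$ is genuinely an element of $L_X$ to which the telescoping bound applies.
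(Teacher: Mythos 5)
Your proof is correct and follows essentially the same route as the paper: the paper also fixes a basepoint, observes that the orbit map $L_X\to Y$ is Lipschitz with respect to the word length $\ell$ (your telescoping estimate), and then chains this with the bounded stretch constant $C$ from Lemma~\ref{lem_alg_action} to bound $d_Y(*, \tau_g^*(f).*)\leq MC\,\ell(f)$. The only cosmetic difference is that the paper phrases the telescoping bound as "the orbit map is $M$-Lipschitz" rather than spelling it out.
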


\begin{proof}
Endow $L_V=\grp{\Gamma,X_1,\dots,X_p}$ with the word metric $\ell$ on the infinite generating set $\{X_1,\dots,X_p\}\cup\Gamma\setminus\{1\}$. 
Fix a basepoint $* \in Y$. The orbit map $L_V \to Y$ given by $f \mapsto f.*$ is $M$-Lipschitz
where $M$ is an upper bound on the diameter of $\Gamma.*$ and on the distances $d_Y(*,X_i.*)$. 
Thus, using Lemma \ref{lem_alg_action}, for all $f\in L_V$ and $g\in G$,
$$d_Y(*,f\circ \tau_g.*)\leq M \ell(f\circ \tau_g)\leq MC\ell(f).$$
\end{proof}

If $V$ is an algebraic group, we can apply the above result with $G=V$ acting on itself
by right multiplication,  
i.e. $G \times V \to V$ is given by $(g,v)\mapsto v \fois g$. 
In this case $\tau_g = \rho_{g}$ so we get
\begin{cor}\label{cor_bound} Let $V$ be an algebraic group.
Consider an action of $L_V$ by isometries on a metric space $Y$, such that $\Gamma$ has bounded orbits.

Then there exists a constant $K$ such that 
for all $f\in L_V$, and all $v\in V$, $||\rho_v^*(f)||_Y \leq K\ell(f)$.
\end{cor}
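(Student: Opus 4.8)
The plan is to obtain this as the special case of Lemma~\ref{lem_bound_algebraic} in which $G=V$ acts on $X=V$ by right multiplication. First I would check that the map $a:V\times V\ra V$ defined by $a(g,x)=x\fois g$ is an algebraic action of the algebraic group $V$ on the variety $V$: indeed, $a=\mu\circ\sigma$ where $\mu:V\times V\ra V$ is the (algebraic) group law and $\sigma:V\times V\ra V\times V$ is the coordinate swap $(g,x)\mapsto(x,g)$, which is algebraic since its components are coordinate projections; hence $a$ is algebraic as a composition of algebraic maps. It is an action because $\fois$ is associative with unit $e$. For this action, $\tau_g:X\ra X$ is exactly $\rho_g:x\mapsto x\fois g$, so the induced automorphism $\tau_g^*\in\Aut_\Gamma(L_X)$ is precisely $\rho_g^*\in\Aut_\Gamma(L_V)$ (note $L_X=L_V$ since $X=V$).

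Now I would simply invoke Lemma~\ref{lem_bound_algebraic} with this $G$, $X$, and the given isometric action of $L_V$ on $Y$ for which $\Gamma$ has bounded orbits: it provides a constant $K$ with $\|\tau_g^*(f)\|_Y\le K\,\ell(f)$ for all $f\in L_V$ and all $g\in V$. Substituting $\tau_g^*=\rho_g^*$ and renaming $g$ as $v$ yields $\|\rho_v^*(f)\|_Y\le K\,\ell(f)$ for all $f\in L_V$ and all $v\in V$, which is the claim. There is essentially no obstacle here: the only point deserving a line of care is that Lemma~\ref{lem_bound_algebraic} is already phrased in terms of translation length (its proof bounds the displacement of a basepoint, which dominates the translation length), so no further estimate is needed.
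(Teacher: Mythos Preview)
Your proposal is correct and is exactly the approach the paper takes: the paper simply states that one applies Lemma~\ref{lem_bound_algebraic} with $G=V$ acting on $X=V$ by $(g,x)\mapsto x\fois g$, so that $\tau_g=\rho_g$. Your extra verification that this map is algebraic (as $\mu$ composed with the coordinate swap) is a welcome detail the paper leaves implicit.
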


We will also use Lemma \ref{lem_bound_algebraic} for the action 
of the algebraic group $G=V \times V$ on the variety $V$ by left and right multiplication
given by $(a,b) \cdot x = a \fois x \fois b$
(formally, this is rather a left action of $V\times \check V$ on $V$
where $\check V$ is the algebraic group $V$ with the multiplication law $\check\mu$
defined by $\check\mu(\ul v_1,\ul v_2)=\mu(\ul v_2,\ul v_1)$).

For $a,b \in V$ we denote by $\Delta_{a,b}: V \to V; v \mapsto a \fois v \fois b$. In this setting Lemma \ref{lem_bound_algebraic} 
gives:

\begin{cor}\label{cor_bound2} Let $V$ be an algebraic group.
Consider an action of $L_V$ by isometries on a metric space $Y$, such that $\Gamma$ has bounded orbits. 

Then for all $f\in L_V$, there exists a constant $C_f$ such that 
for all $a,b\in V$, $||\Delta_{a,b}^*(f)||_Y \leq C_f$. \qed
\end{cor}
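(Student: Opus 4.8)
The statement to prove is Corollary \ref{cor_bound2}: for an algebraic group $V$ acting on the metric space $Y$ via an isometric action of $L_V$ in which $\Gamma$ has bounded orbits, and for each fixed $f \in L_V$, there is a constant $C_f$ such that $\|\Delta_{a,b}^*(f)\|_Y \leq C_f$ for all $a,b \in V$. This is essentially an immediate application of Lemma \ref{lem_bound_algebraic} to the correct algebraic action. The first step is to set up the right group action: as indicated in the text just before the statement, $V \times \check V$ (where $\check V$ has the opposite multiplication law) acts algebraically on the variety $X = V$ by $(a,b)\cdot x = a \fois x \fois b$. One should check that this is indeed an algebraic action of an algebraic group on a variety: $V \times \check V$ is an algebraic group (product of algebraic groups, with $\check V$ being an algebraic group since its multiplication $\check\mu(\ul v_1,\ul v_2)=\mu(\ul v_2,\ul v_1)$ is still a composition of the algebraic map $\mu$ with a coordinate permutation), and the map $(a,b,x)\mapsto a\fois x\fois b$ is algebraic because it is a composition of the algebraic maps $\mu$ applied twice.

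\textbf{Applying the previous lemma.} With $G = V \times \check V$ and $X = V$, Lemma \ref{lem_bound_algebraic} applies directly: it provides a constant $K$ (depending only on $V$, $Y$, and the action, not on $f$) such that for every $g = (a,b) \in G$ and every $f \in L_V = L_X$, one has $\|\tau_g^*(f)\|_Y \leq K\,\ell(f)$. Here $\tau_g = \tau_{(a,b)}$ is exactly the map $x \mapsto a\fois x\fois b$, which is the map denoted $\Delta_{a,b}$ in the statement. Therefore $\|\Delta_{a,b}^*(f)\|_Y \leq K\,\ell(f)$ for all $a,b \in V$. Now fixing $f$ and setting $C_f = K\,\ell(f)$ gives the claimed bound, which is uniform over all pairs $(a,b) \in V \times V$.

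\textbf{The main (minor) obstacle.} There is no serious obstacle here; the corollary is a formal consequence of Lemma \ref{lem_bound_algebraic}. The only point requiring a moment of care is the verification that $\Delta_{a,b}$ really is the map $\tau_{(a,b)}$ for the action described, and that this action is algebraic in the precise sense required by Lemma \ref{lem_bound_algebraic} — i.e. that $G \times X \to X$ is given by word maps. This follows since $a\fois x\fois b = \mu(\mu(a,x),b)$, and $\mu$ is an algebraic map by the definition of an algebraic group, so composing algebraic maps and inserting the coordinate-swap that defines $\check V$ keeps everything algebraic. (The parenthetical formal point that one is really using $V \times \check V$ rather than $V \times V$ is only bookkeeping to make "$G$ is an algebraic group" literally true, since left and right multiplications do not commute with each other in the naive order; it does not affect the estimate.) With that in hand, the conclusion is immediate and $C_f = K\,\ell(f)$ works.
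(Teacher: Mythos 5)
Your proposal is correct and follows exactly the paper's intended route: apply Lemma \ref{lem_bound_algebraic} to the algebraic action of $V \times \check V$ on $X=V$ given by $(a,b)\cdot x = a \fois x \fois b$, note that $\tau_{(a,b)}=\Delta_{a,b}$, and set $C_f = K\ell(f)$. The paper marks this corollary with a \qed precisely because it is this immediate specialization, and your careful verification that $V\times\check V$ is an algebraic group acting algebraically is the only (routine) check needed.
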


\subsection{Surfaces in the JSJ decomposition} 

The goal of this section is the following result.
\begin{thm} \label{thm_noQH}
Let $V$ be a \conn\ algebraic group with coordinate group $L_V$,
and $\Lambda$ be a standard JSJ decomposition of $L_V$.

Then $\Lambda$ has no vertex of surface type whose corresponding surface has infinite mapping class group.
\end{thm}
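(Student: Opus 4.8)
\textbf{Proof strategy for Theorem \ref{thm_noQH}.}
The plan is to argue by contradiction: suppose $\Lambda$ has a surface type vertex $v_S$ whose surface $S$ has infinite mapping class group. Using Theorem \ref{thm_norigid} we know the only rigid vertex of $\Lambda$ is $v_\Gamma$, and by Lemma \ref{lem_homogene}(\ref{it_vGamma}) its vertex group is exactly $\Gamma$. The mapping class group of $S$ acts on $L_V$ by $\Gamma$-automorphisms supported on $G_{v_S}$ (Dehn twists along curves in $S$, and possibly homeomorphisms permuting boundary components), and when $\mathrm{MCG}(S)$ is infinite this produces either a pseudo-Anosov element or an infinite-order Dehn twist in $\Aut_\Gamma(L_V)$. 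In either case we get an isometric action of $L_V$ on a suitable tree or complex (the Bass-Serre tree of a one-edge splitting of $G_{v_S}$ dual to a simple closed curve, amalgamated with the rest of $\Lambda$ along the boundary edges) on which $\Gamma$ has bounded orbits, and on which the surface automorphism acts with positive translation length growing linearly under iteration — i.e.\ $\|(\sigma^*)^k(f)\|_Y$ grows linearly in $k$ for a suitable coordinate function $f$.

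The key step is then to contradict the bounded stretch estimate. The surface automorphisms of $G_{v_S}$ do \emph{not} in general arise from elements of $V$ acting by $\rho_v^*$, so Corollary \ref{cor_bound} does not directly apply to them. The idea is instead to realize the surface automorphism (or a power of it) as lying in the subgroup of $\Aut_\Gamma(L_V)$ generated by the images of $V$ and $V\times V$. Concretely, using homogeneity (Corollary \ref{cor_discriminating}) together with the discriminating property of $\{h_e\circ\rho_v^*\mid v\in V\}$ from Lemma \ref{lem_discriminating_rho}, and the characterization of the group law from Corollary \ref{cor_magique}, one shows that modifying $h_e$ by a surface automorphism $\sigma^*$ can be matched, on larger and larger finite subsets, by $h_e\circ\rho_v^*$ for suitable $v\in V$. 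Applying Corollary \ref{cor_bound} (or Corollary \ref{cor_bound2} for the two-sided action $\Delta_{a,b}$) to the action on $Y$ then bounds $\|h_e\circ\rho_v^*(f)\|_Y$ uniformly, while on the other hand these homomorphisms must approximate $h_e\circ\sigma^*$ and its powers, whose translation lengths on $Y$ are unbounded. This is the contradiction.

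The main obstacle, and the technically delicate point, is precisely the passage from ``$\sigma^*$ is a surface automorphism of $L_V$'' to ``$\sigma^*$ (or something with comparable translation length on $Y$) is visible in the image of $V$ or $V\times V$ in $\Aut_\Gamma(L_V)$''. Here one must use that $\Aut_\Gamma(L_V)$ acts on $\HOM$ and that $V$ acts transitively (and discriminatingly) via $\rho^*$, so that for any automorphism $\alpha$ and any finite set $F\subset L_V$ there is $v\in V$ with $h_e\circ\alpha$ and $h_e\circ\rho_v^*$ agreeing on $F$; choosing $F$ large enough to detect long elements of the form $(\sigma^*)^k(f)$ forces $\ell$ of the relevant images in $L_V$ to be large, and then one invokes the fact that a surface automorphism with $\mathrm{MCG}(S)$ infinite has an element whose iterates stretch lengths of coordinate functions without bound — contradicting Lemma \ref{lem_alg_action}. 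Making the ``detect long elements'' step precise requires the result of Handel--Mosher and Guirardel--Horbez quoted in the introduction, used here in the relative (surface vertex) setting to rule out the pseudo-Anosov case, together with a direct translation-length estimate for the Dehn-twist case; one then concludes by quoting Lemma \ref{lem_bound_algebraic} applied to the $L_V$-action on the surface's curve complex or on the associated $\bbR$-tree.
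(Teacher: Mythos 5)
There is a genuine gap in your sketch, and it sits exactly where you flag it as ``the technically delicate point.'' Corollary \ref{cor_bound} bounds $\|\rho_v^*(f)\|_Y$ for $f\in L_V$ and $v\in V$; it says nothing about an arbitrary $\sigma^*\in\Aut_\Gamma(L_V)$. Your proposed bridge --- that $h_e\circ\sigma^*$ can be ``matched on larger and larger finite subsets'' by some $h_e\circ\rho_v^*$ --- is actually true in an exact form: by Lemma \ref{lem_discriminating_rho} there is always $v\in V$ with $h_e\circ\sigma^* = h_e\circ\rho_v^*$. But this is merely an equality of homomorphisms $L_V\to\Gamma$ (\ie\ of points of $V$); it does not make $\sigma^*$ and $\rho_v^*$ close in $\Aut_\Gamma(L_V)$ or $\Out(L_V)$, and it transfers no information between the translation lengths $\|\sigma^*(f)\|_Y$ and $\|\rho_v^*(f)\|_Y$ in an $L_V$-tree $Y$. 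So bounded stretch cannot be leveraged against an arbitrary surface automorphism this way, and the contradiction you are aiming for does not materialize. (Also, Handel--Mosher/Guirardel--Horbez is not used here; in the paper it only enters in Theorem \ref{thm_triangulaire}.)

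The paper's proof runs in the opposite direction and splits into two genuinely different steps, the second of which is entirely missing from your sketch. First, for $v$ in the finite-index subgroup $V_0\subset V$ of elements whose $\rho_v^*$ preserves the conjugacy class of $Q=G_{v_S}$ and of the incident edge groups, $\rho_v^*$ \emph{already} restricts (up to conjugation) to a surface automorphism $\sigma_v$ of $Q$; bounded stretch on the tree $T_\beta$ dual to a simple closed curve $\beta$ then shows the resulting morphism $\sigma\colon V_0\to\Out(Q)$ has \emph{finite image} (an infinite-order $\sigma_v$ would give $i(\beta,\sigma_v^n(\beta'))\to\infty$, hence $\|\rho_{v^{\fois n}}^*(g)\|_{T_\beta}\to\infty$, against Corollary \ref{cor_bound}; this kills Dehn twists and pseudo-Anosovs in $\sigma(V_0)$ uniformly). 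But finiteness of $\sigma(V_0)$ is far from finiteness of $\Out(Q)$. The paper then sets $V_1=\ker\sigma$, uses Lemma \ref{lem_indice_fini} and the discriminating family $\{h\circ\rho_v^*\mid v\in V_1\}$ to conclude that $h_{|Q}$ is injective, picks an infinite-order Dehn twist $\alpha\in\Aut_\Gamma(L_V)$ acting as the identity on a non-abelian $Q_1\subset Q$, and applies a pigeonhole argument: the $h\circ\alpha^k$ lie in finitely many $V_1$-orbits, so $h\circ\alpha^k=h\circ\alpha^l\circ\rho_v^*$ for distinct $k,l$ and some $v\in V_1$, whence (using that $\rho_v^*|_Q$ is a conjugation, $c$ centralizes $h(Q_1)$ so $c=1$, and $h_{|Q}$ is injective) $\alpha^k_{|Q}=\alpha^l_{|Q}$, a contradiction. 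Without that second, counting step there is no way to pass from ``the automorphisms coming from $V$ act finitely on $Q$'' to ``$\mathrm{MCG}(S)$ is finite.''
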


We will use the following lemma which is standard in the
orientable case. See below for a proof in general.

\begin{lem}\label{lem_MCG}
Let $\sigma$ be an element of infinite order in
the mapping class group of a hyperbolic surface $\Sigma$ (maybe with boundary).

Then there exists an essential simple closed curve 
(maybe 1-sided if $\Sigma$ is non-orientable) $\beta$
such that the intersection numbers $i(\beta,\sigma_v^n(\beta))$ are unbounded.
\end{lem}

\begin{proof}[Proof of Theorem \ref{thm_noQH}]
Let $Q$ be a vertex group of surface type of $\Lambda$.
Let $V_0\subset V$ be the finite index subgroup consisting of elements
$v$ such that $\rho_v^*$ preserves the conjugacy class of $Q$ and of incident edge groups
(we don't claim that $V_0$ is an algebraic subgroup of $V$).

Given $v\in V_0$, 
consider $g_v\in L_V$ such that $\ad_{g_v}\circ \rho_v^*(Q)\subset Q$, 
and consider the restriction $\sigma_v=(\ad_{g_v}\circ \rho_v^*)_{|Q}$.
Noting that the class of $\sigma_v$ in $\Out(Q)$ does not depend on the choice of $g_v$,
the map $v\mapsto \sigma_v$ is a well defined morphism $\sigma:V_0\ra \Out(Q)$.
We view $\sigma_v$ as an element of the mapping class group of the underlying surface $\Sigma$.

We claim that this morphism has finite image.
Otherwise, since $\Out(Q)$ is virtually torsion-free, there exists $v\in V$ such that $\sigma_v$ has infinite order.

Consider $\beta$ a simple closed curve given by Lemma \ref{lem_MCG}.

Let $T_\beta$ be the refinement of the JSJ decomposition of $L_V$ given by the curve $\beta$ (if $\beta$ is one-sided, this is the splitting dual to the boundary of a Möbius band with core curve $\beta$).
Let $g\in Q$ be an element representing $\beta$. 
Then $||\sigma_v^n(g)||_{T_\beta}= i(\beta,\sigma_v^n(\beta))$ is unbounded.
Since $\sigma_v^n(g)$ equals $(\rho_v^*)^n(g)$ up to conjugacy, and $(\rho_v^*)^n=\rho^*_{v^{\fois n}}$, this contradicts Corollary \ref{cor_bound} for the space $Y=T_\beta$.
This concludes the proof that $\sigma$ has finite image.

Let $h\in \HOM$ be any $\Gamma$-homomorphism. We claim that $h_{|Q}$ is injective.
Indeed, let $V_1\subset V_0$ be the finite index subgroup
of $V$ consisting of elements $v\in V_0$ such that $\sigma_v=\id_Q$ in $\Out(Q)$.
By Lemma \ref{lem_indice_fini}, the family $\{h\circ\rho_v^*| v\in V_1\}$
is discriminating. Since all these morphisms differ by a conjugation on $Q$,
$h_{|Q}$ is injective.

Now assume by contradiction that $Q$ has infinite outer automorphism group.
This implies that it contains a Dehn twist of infinite order.
Extend this Dehn twist to an automorphism
$\alpha \in Aut_{\Gamma}(L_V)$ preserving $Q$,
and acting as the identity on a non-abelian subgroup $Q_1\subset Q$ 
(corresponding to the fundamental group of a connected component of the complement of the twisting curve).
By homogeneity, and since $V_1$ has finite index in $V$, the $h \circ \alpha^k$ fall into finitely many $V_1$-orbits. Thus we can find 
distinct indices $k,l \in \N$  
such that $h\circ \alpha^k$ and $h\circ\alpha^l$
are in the same $V_1$-orbit, \ie there exists $v \in V_1$ such that $h \circ \alpha^k = h \circ \alpha^l \circ \rho^*_{v}$. 

Since $\rho^*_{v}$ restricts to a conjugation on $Q$, 
there exists $c\in \Gamma$ such that
$$h_{|Q} \circ (\alpha^k)_{|Q} = \ad_c \circ h_{|Q} \circ  (\alpha^l)_{|Q}$$
Since $\alpha_{|Q_1}=\id$, $c$ centralizes $h(Q_1)$, which is non abelian by injectivity of $h$ on $Q$. This implies $c=1$ since $\Gamma$ is torsion-free hyperbolic.
Since $h$ is injective on $Q$, $\alpha^k_{|Q}=\alpha^l_{|Q}$, a contradiction.
\end{proof}

\begin{proof}[Proof of Lemma \ref{lem_MCG}.] 
In the orientable case, this classical fact follows from Thurston's classification and \cite[Prop.~3.2, Cor~.13.3, Th.~14.24]{FaMa_primer}.

In the non-orientable case, Thurston's classification still holds
\cite{Paris_beginners,Wu}. If
the decomposition associated to $\sigma$ 
contains a pseudo-Anosov piece, 
one can use that the lift of a pseudo-Anosov
to the orientation cover is still pseudo-Anosov \cite{Wu}
so one can take for $\beta$ any simple curve contained in the pseudo-Anosov piece.

Otherwise, some power of $\sigma$ is a product of Dehn twists along
disjoint two-sided simple closed geodesics $\alpha_1, \cdots, \alpha_p$. 
where no $\alpha_i$ bounds a Möbius band.
Let $\Sigma'$ be the connected component of $\Sigma\setminus (\alpha_2\cup\dots\cup \alpha_p)$ containing $\alpha_1$.

We claim that
there exists $\beta \subset \Sigma'$ such that one of the following holds 
\begin{itemize}
    \item $\beta$ is two sided and intersects $\alpha_1$ non-trivially 
\item or $\beta$ intersects $\alpha_1$ exactly once and the regular neighbourhood of $\alpha_1\cup \beta$ is a once-punctured Klein bottle.
\end{itemize}

If $\Sigma'\setminus \alpha_1$ has two connected components $\Sigma'_1,\Sigma'_2$, 
then one constructs $\beta$ satisfying (1)
by concatenating two essential arcs $\gamma_i \subset \Sigma'_i$ for which $\alpha_1\cup \gamma_i$ has
an orientable neighbourhood. The existence of $\gamma_i$ is clear 
if $\Sigma'_i$ is orientable. If $\Sigma'_i$ is non-orientable, then
it can be written as a $n$-punctured sphere (having $\alpha_1$ as one of its 
boundary components), to which one glues
$k$ Möbius bands with $1\leq k\leq n-1$. One has $n\geq 3$
because $\alpha_1$ does not bound a Möbius band, so one can choose $\gamma_i$
contained in the $n$-punctured sphere.

If $\Sigma'\setminus \alpha_1$ is connected, 
consider $\beta$ a simple closed curve
intersecting $\alpha_1$ exactly once.
Then $\beta$ satisfies (1) or (2) according to whether
the regular neighbourhood $K$ of $\alpha_1\cup \beta$ is homeomorphic to
a once-punctured torus or to
a once-punctured Klein bottle. This concludes the proof of our claim.

If we have found $\beta\subset \Sigma'$ satisfying (1),
then by \cite{Stukow_Dehn} (see also
\cite[Th 11]{Paris_beginners}), $i(\beta, \sigma^n(\beta))$ is unbounded and the lemma is proved.  

Consider $\beta$ satisfying (2)
and let $K$ be the regular neighbourhood of $\alpha_1\cup \beta$. Thus
$K$ is homeomorphic to a once-punctured Klein bottle, and its fundamental group is a free group $\grp{\alpha_1,\beta}$.
The set of simple curves of $K$ having bounded intersection number
with $\alpha_1$ and $\beta$ is finite, but the Dehn twist 
along $\alpha_1$ sends $\beta$ to $\alpha_1^n\beta$ which 
lie in distinct conjugacy classes.
We conclude that the intersection number of $\beta$ with
$\sigma^n(\beta)$ is unbounded, which proves the lemma.
\end{proof}

\section{Classification of the underlying variety} \label{sec_underlying_variety}

Recall that $h_e\in \HOM$ is the $\Gamma$-homomorphism associated to the neutral element of the algebraic group $V$.

\begin{figure}[h!] 
\begin{center}
\includegraphics[scale=.8]{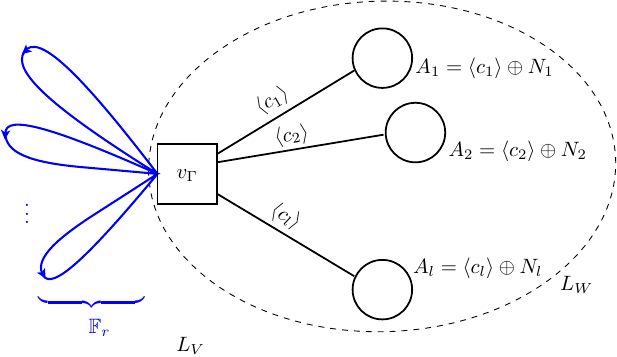}
\end{center}
\caption{The limit group $L_V=L_W*\bbF_r$ of a \conn\ algebraic group.}\label{fig_rose}
\end{figure}

\begin{thm} \label{thm_structure_V} Let $\Gamma$ be a torsion-free hyperbolic group.
Let $V$ be a \conn\ algebraic group over $\Gamma$ with coordinate group $L_V$.

Then 
there exist non-conjugate maximal cyclic groups $\grp{c_1}, \ldots, \grp{c_l}$ of  $\Gamma$ and $n_1, \ldots, n_l \in \bbN\setminus\{0\}$ such that 
$L_V$ is the free product of a free group with a multiple extension of centralizers $L_W$ of $\Gamma$ along the $c_j$'s as on Figure \ref{fig_rose},
where the abelian vertex groups are given by $A_j=\grp{c_j}\oplus N_j$, with
 $N_j=\ker h_{e|A_j} \simeq \bbZ^{n_j}$.

In particular, 
the algebraic variety $V$ is isomorphic to $\Gamma^r\times \grp{c_1}^{n_1}\times\dots\times \grp{c_l}^{n_l}$. 
\end{thm}

\begin{proof}
    Let $\Lambda$ be a standard JSJ decomposition of $L_V$.
    
    By Proposition \ref{prop_freeprod}, the Grushko decomposition of $L_V$ relative to $\Gamma$ has the form
    $L_V=L_V^0*\bbF_r$ with $\Gamma\subset L_{V}^0$. Thus, the subgraph $\Lambda_{0}\subset \Lambda$ spanned by vertices and edges with non-trivial vertex or edge group is connected, with $\pi_1(\Lambda_0)=L_{V}^0$.
    By Lemma \ref{lem_homogene}, the rigid group of $\Lambda_0$ containing $\Gamma$ is equal to $\Gamma$.
    We denote by $v_\Gamma$ the corresponding vertex of $\Lambda_0$.
    
    By Theorem \ref{thm_noQH} and Theorem \ref{thm_norigid}, $\Lambda_0$ has no vertex of surface type and no rigid vertex
    except $v_\Gamma$. Thus, $v_\Gamma$ is the only vertex of $\Lambda_0$ with non-abelian stabilizer. Moreover, since the Bass-Serre tree of $\Lambda_0$ is a tree of cylinders, it is bipartite between
    abelian and non-abelian vertices. It follows that every abelian vertex $v$ of $\Lambda_0$ is adjacent
    to $v_\Gamma$, and by Assertion \ref{it_boucle} of Lemma \ref{lem_homogene}, there is exactly one edge in $\Lambda_0$ joining $v$ to $v_\Gamma$. 
    Thus, $\Lambda_0$ is a star with center $v_\Gamma$. 
    The incident edge groups $\grp{c_j}\subset \Gamma$ are  
    not conjugate in $\Gamma$ by ayclindricity at $v_\Gamma$ ($\Lambda_0$ is a tree of cylinders).
    
    By Assertion \ref{it_maxZ} of Lemma \ref{lem_homogene}, 
    all edge groups of $\Lambda_0$ are maximal cyclic in $L_V$. 
    This shows that $L_V^0$ is a multiple extension of centralizers and
    Lemma \ref{lem_direct_sum} concludes the proof of the first part.

    The group $L_V$ has a presentation of the form
    $$L_V=\Big\langle \Gamma,x_1,\dots,x_r, y_{i,j} i\leq n_j, j\leq l
    \ \Big |\ [y_{i,j},y_{i',j}]=1, [y_{i,j},c_j]=1\Big\rangle.$$
    This identifies $V$ with the set of tuples in $\Gamma^{r+n_1+\dots +n_l}$ which satisfy
    the equations 
    $[y_{i,j},y_{i',j}]=1, [y_{i,j},c_j]=1$
    hence with $\Gamma^r\times \grp{c_1}^{n_1}\times\dots\times \grp{c_l}^{n_l}$.
\end{proof}

\section{Classification of the multiplication law: proof of Theorem \ref{thm_main_result}} \label{sec_mult_table}

The goal of this section is to prove Theorem \ref{thm_main_result}.

The first step of the proof was achieved by Theorem \ref{thm_structure_V}, 
which gives a full description of the underlying variety of an algebraic group $(V, \mu)$ over $\Gamma$. 
To complete the classification of algebraic groups, we need to classify the possible multiplication laws on $V$. Our goal is to find a system of coordinates 
in which $\mu$ is the standard multiplication law.

In subsection \ref{subsec_changing_coordinates}, we set up notations
for systems of coordinates on a general variety $V$, 
and the corresponding systems
on $V\times V$ and $V\times V\times V$. 
We use this to express associativity of the multiplication law when $V$ is an algebraic group.

In subsection \ref{subsec_coordinates_L_V}, we use the structure of the variety $V$ given by Theorem \ref{thm_structure_V} to
choose an adapted coordinate system.

In subsection \ref{sec_multiplication_abelian},
we show that in this system of coordinates,
the multiplication law is standard on the abelian part.

In subsection \ref{subsec_triangular}, we prove that we can change
our system of coordinates on $V$
such that, for each pair of elements $x,y\in V$,
multiplication on the left by $x$ and on the right by $y$
is a transformation $\Delta_{x,y}$ that is triangular in these coordinates.
There is nothing to do in the abelian part, and in the free product part,
we use the bounded stretch argument (Corollary \ref{cor_bound}) to 
exclude the possibility that $\Delta_{x,y}$ be fully irreducible. We then use \cite{HaMo_subgroup}\cite[Theorem 1]{GH_boundaries} to deduce that the action of $V\times V$ on $V$ by left and right multiplication is reducible. 
The triangular form follows by induction. 

This means that the $i$-th coordinate of 
$\Delta_{x,y}(v)$ depends only the first $i$ coordinates of $v$ (as well as on $x$ and $y$).
The dependence in $v$ is algebraic by definition, and
in subsection \ref{sec_formal}, we show that 
the dependence on $x$ and $y$ is also given by algebraic maps.
In subsection \ref{subsec_structure_of_mult}, we
make some computations based on the associativity
of the multiplication law to deduce a new coordinate system
in which the multiplication law becomes standard. 
This is the content of
Proposition \ref{prop_changement_var} which thus concludes
the proof of Theorem \ref{thm_main_result}.

\subsection{Systems of coordinates} \label{subsec_changing_coordinates}

Consider $V$ is a general variety. Starting with $T_1, \ldots, T_n\in L_V$ such that $L_V=\grp{\Gamma, T_1, \ldots, T_n}$,
we get an embedding of $V=\HOM$ into $\Gamma^n$ via $h\in \HOM\mapsto (h(T_1),\dots,h(T_n))$.
This way, $T_i$ becomes the $i$-th coordinate function on $V$, and we call $(T_1,\dots,T_n)$
a system of coordinates on $V$.

Such a system induces a system of coordinates $(X_1,\dots,X_n,Y_1,\dots, Y_n)$ for $V \times V$ as follows.
Denoting by $p_1,p_2:V\times V\ra V$ the two projections,
we define $X_i=T_i\circ p_1$ and $Y_i=T_i\circ p_2$.
Equivalently, $X_i=p_1^*(T_i)$ and $Y_i=p_2^*(T_i)$.
Viewing $L_{V\times V}$ as a quotient of $L_V*_\Gamma L_V$,
$X_i$ (resp. $Y_i$) 
is the image of $T_i$ under the left (resp. right) embedding of $L_V$ into $L_{V\times V}$.

Suppose now $V$ is an algebraic group with multiplication law $\mu: V \times V \to V$.
We can write $\mu$
in these coordinates as
$$\bMx{x_1\\\vdots\\ x_n}\fois\bMx{y_1\\\vdots\\ y_n} =
\bMx{\mu_1(\ul x,\ul y)\\\vdots\\ \mu_n(\ul x,\ul y)}.$$
The corresponding comultiplication law $\mu^*:L_V\ra L_{V\times V}$ is defined on the generating set 
by 
$$\mu^*:\left\{\Mx{
      T_1\mapsto \mu^*(T_1)=\mu_1(X_1,\dots,X_n,Y_1,\dots,Y_n)  \\
\vdots \\
T_n\mapsto \mu^*(T_n)=\mu_n(X_1,\dots,X_n,Y_1,\dots,Y_n)}
\right.$$

Our goal is to find coordinates $T_1, \ldots, T_n \in L_V$ in which $\mu$ is the standard multiplication law
i.e.,  $\mu^*(T_i)=X_iY_i$ for all $i\leq n$.

\begin{rem}\label{rem_example}
If we change the coordinate system $(T_1,\dots,T_n)$ on $V$, we need to change the coordinate system on $V^2$ accordingly.
For instance, if we change $T_i$ to $T'_i=T_i\m$, 
then $X_i$ changes to $X'_i=p_1^*(T'_i)=X_i\m$ and similarly, 
$Y_i$ changes to $Y'_i=Y_i\m$.
If we change $T_i$ to $T'_i=u T_i v$ for some $u,v\in L_V$, 
then $X_i$ changes to $X'_i=u^{(X)}X_i v^{(X)}$
where $u^{(X)}=p_1^*(u)\in L_{V^2}$ and $v^{(X)}=p_1^*(v)$ are the copies of $u$ and $v$ 
under the left embedding of $L_V$ into $L_{V^2}$; there are similar changes for $Y_i$.
\end{rem}

Just as for $V \times V$, the system of coordinates $(T_1, \ldots, T_n)$ on $V$ also induces a system of coordinates on $V \times V \times V$: we define $X_i,Z_i, Y_i\in L_{V^3}$ (in this order) 
as the image of $T_i$ under the left, middle, and right embedding of $L_V$
in $L_{V^3}$ respectively. 
This corresponds to coordinates associated to the embedding 
$V\times V\times V \subset \Gamma^n \times \Gamma^n \times \Gamma^n$.

Let $(\ul x,\ul z,\ul y)$ be a point of $V\times V\times V$.
Associativity then reads
$$(\ul x \fois \ul z)\fois \ul y= \ul x\fois (\ul z\fois \ul y)\quad\text{i.e.}\quad
\mu(\mu(\ul x,\ul z),\ul y)=\mu(\ul x,\mu(\ul z,\ul y))$$
$$\text{i.e.}\quad \mu \circ (\mu_{X,Z}\times \id_Y)=\mu \circ (\id_X\times \mu_{Z,Y})$$
where $\mu_{X,Z}\times \id_Y:V^3\ra V^2$ is the map sending $(\ul x,\ul z,\ul y)$ to $(\mu(\ul x,\ul z),\ul y)$ and
$\id_X\times \mu_{Z,Y}:V^3\ra V^2$ maps $(\ul x,\ul z,\ul y)$ to $(\ul x, \mu(\ul z,\ul y))$.

At the level of function groups, the maps induced by $\mu_{X,Z}\times \id_Y$ and $\id_X\times \mu_{Z,Y}$
are
$$(\mu_{X,Z} \times \id_Y)^*:
\left\{\begin{array}{rcl}
L_{V\times V}&\ra& L_{V\times V\times V}\\
X_i &\mapsto& \mu_i(\ul X,\ul Z) \\
Y_i &\mapsto& Y_i \\
\end{array}\right., \quad\text{and}\quad
(\id_X\times \mu_{Z,Y})^*:
\left\{\begin{array}{rcl}
L_{V\times V}&\ra& L_{V\times V\times V}\\
X_i &\mapsto& X_i \\
Y_i &\mapsto& \mu_i(\ul Z,\ul Y) \\
\end{array}\right.
$$
and associativity translates into
$$(\mu_{X,Z} \times \id_Y)^*\circ \mu^*=
(\id_X \times \mu_{Z,Y})^* \circ \mu^*.$$

\subsection{Coordinates on the groups $L_V$, $L_{V^2}$ and $L_{V^3}$} \label{subsec_coordinates_L_V}

We have seen in Theorem \ref{thm_structure_V} that the underlying
variety $V$ of an algebraic group is isomorphic to 
 $\Gamma^r\times \grp{c_1}^{n_1}\times\dots \times \grp{c_l}^{n_l}$ where $\grp{c_1},\dots,\grp{c_l}$ are non-conjugate maximal cyclic subgroups of $\Gamma$.
Moreover, the group $L_V$  can be written as the fundamental group of
a graph of groups $\Lambda$ as in Figure \ref{fig_rose}, 
\ie it is the free product of a free group $\bbF_r$ with a multiple extension of centralizers $\Gamma *_{\grp{c_j}} A_j$ with $A_j=\grp{c_j}\oplus N_j$ and $N_j=\ker h_{e|A_j}\simeq \bbZ^{n_j}$,
 where $h_e\in \HOM$ is the $\Gamma$-homomorphism
 corresponding to the neutral element $e\in V$.

We denote by $T_1,\dots, T_r$ a free basis of the free group $\bbF_r$, and for all $j\leq l$, we denote by
$T_{j,1},\dots, T_{j,n_j}$ a basis of the free abelian group $N_j$, so that in particular $h_e(T_{j,k})=1$ for all $j,k$
(see Figure \ref{GoG_V_times_V}).
We say that the $T_i$ ($i\leq r$) are the free generators, and the $T_{j,k}$ ($j\leq l$, $k\leq n_j$) are the abelian generators.

Let $p_1,p_2:V\times V\ra V$ the two projections.
As described above, from the coordinate system $T_i,T_{j,k}$ on $V$,
we get a coordinate system $X_i$, $X_{j,k}$, $Y_i$, $Y_{j,k}$ on $V\times V$ defined by
$X_i=p_1^*(T_i)$, $X_{j,k}=p_1^*(T_{j,k})$, $Y_i=p_2^*(T_i)$ and $Y_{j,k}=p_2^*(T_{j,k})$ (see Figure \ref{GoG_V_times_V}). 
We denote by $N_j^{(X)}=\grp{X_{j,1},\dots,X_{j,n_j}}=p_1^*(N_j)$ (resp. $N_j^{(Y)}=\grp{Y_{j,1},\dots,Y_{j,n_j}}=p_2^*(N_j)$)
the two copies of $N_j$ in $L_{V\times V}$.
Similarly, when dealing with $V^3$, we will use the coordinate system $X_i$, $X_{j,k}$, $Z_i$, $Z_{j,k}$, $Y_i$, $Y_{j,k}$ on $V^3$ associated to the three projections $V^3\ra V$. We also define $N^{(X)}_j$, $N^{(Z)}_j$ and $N^{(Y)}_j$ the corresponding copies of $N_j$ in $L_{V^3}$.

\begin{lem} \label{lem_GoG_V_times_V}
The function groups $L_{V^2}$ of $V\times V$ and $L_{V^3}$ of $V\times V\times V$
are both free products of multiple extensions of centralizers $\grp{c_1}, \ldots ,\grp{c_l}$ of $\Gamma$ with a free group, of rank $2r$ and $3r$ respectively (see Figure \ref{GoG_V_times_V}),
where for $j\leq l$, the abelian vertex group of $L_{V^2}$ containing $\grp{c_j}$ is 
$$A_j^{V^2}=\grp{c_j}\oplus \grp{X_{j,1},\dots,X_{j,n_j}}\oplus \grp{Y_{j,1},\dots,Y_{j,n_j}}$$
$$= \grp{c_j}\oplus N_j^{(X)}\oplus N_j^{(Y)}\simeq \grp{c_j}\oplus \bbZ^{2n_j}$$
and the abelian vertex group of $L_{V^3}$ containing $\grp{c_j}$ is 
$$A_j^{V^3}=\grp{c_j}\oplus
\grp{X_{j,1},\dots,X_{j,n_j}}\oplus \grp{Z_{j,1},\dots,Z_{j,n_j}}\oplus \grp{Y_{j,1},\dots,Y_{j,n_j}}$$
$$=\grp{c_j}\oplus N_j^{(X)}\oplus N_j^{(Z)}\oplus N_j^{(Y)}
\simeq \grp{c_j}\oplus \bbZ^{3n_j}.$$

Their respective JSJ decompositions 
are shown on Figure \ref{GoG_V_times_V}.
\end{lem}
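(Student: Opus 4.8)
The plan is to compute $L_{V^2}$ and $L_{V^3}$ directly, combining Lemma~\ref{lem_direct_product} with the explicit form of $L_V$ given by Theorem~\ref{thm_structure_V}. Recall that $L_V=\bbF_r*L_W$ with $L_W=\Gamma*_{\grp{c_1}}A_1*_{\grp{c_2}}\cdots*_{\grp{c_l}}A_l$ a multiple extension of centralizers of $\Gamma$, where $A_j=\grp{c_j}\oplus N_j$, $N_j=\grp{T_{j,1},\dots,T_{j,n_j}}\simeq\bbZ^{n_j}$, $\bbF_r=\grp{T_1,\dots,T_r}$, and each $\grp{c_j}$ is maximal cyclic in $\Gamma$, the $\grp{c_j}$ being pairwise non-conjugate. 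Iterating Lemma~\ref{lem_direct_product}, $L_{V^2}$ (resp.\ $L_{V^3}$) is the maximal residually $\Gamma$ quotient of $L_V*_\Gamma L_V$ (resp.\ of $L_V*_\Gamma L_V*_\Gamma L_V$). Since the amalgamated copy of $\Gamma$ sits inside the $L_W$-factors, we may rewrite
\[
L_V*_\Gamma L_V=\bbF_{2r}*\bigl(L_W^{(X)}*_\Gamma L_W^{(Y)}\bigr),
\]
where $\bbF_{2r}=\grp{X_1,\dots,X_r,Y_1,\dots,Y_r}$, and $L_W^{(X)}*_\Gamma L_W^{(Y)}$ is $\Gamma$ with the centralizer of each $c_j$ extended twice (by $N_j^{(X)}$ and by $N_j^{(Y)}$), amalgamated rather than abelianized; similarly for three copies. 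Note already that this is \emph{not} $L_{V^2}$ in general: the point of the lemma is that passing to the maximal residually $\Gamma$ quotient fuses the two extensions into a single abelian block.

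Next I would write down the candidate answer and check it is the correct quotient. For $V^2$, let $Q_2=\bbF_{2r}*\Gamma*_{\grp{c_1}}A_1^{V^2}*_{\grp{c_2}}\cdots*_{\grp{c_l}}A_l^{V^2}$ with $A_j^{V^2}=\grp{c_j}\oplus N_j^{(X)}\oplus N_j^{(Y)}\simeq\grp{c_j}\oplus\bbZ^{2n_j}$; there is an obvious surjection $L_V*_\Gamma L_V\onto Q_2$, the identity on $\bbF_{2r}$ and killing the commutators $[X_{j,k},Y_{j,k'}]$ for $j\leq l$, $k,k'\leq n_j$. By the characterisation of the maximal residually $\Gamma$ quotient (Corollary~\ref{cor_maxRF}, Lemma~\ref{lem_direct_product}), to conclude $Q_2\cong L_{V^2}$ it suffices to show: (i) $Q_2$ is residually $\Gamma$, and (ii) every $\Gamma$-homomorphism $L_V*_\Gamma L_V\to\Gamma$ kills all the $[X_{j,k},Y_{j,k'}]$, i.e.\ factors through the surjection. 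For (i): $\Gamma*\bbF_{2r}$ is a $\Gamma$-limit group (being fully residually $\Gamma$), and $Q_2$ is obtained from it by successively extending the centralizers of the $c_j$; since the $\grp{c_j}$ are non-conjugate maximal cyclic subgroups of $\Gamma$, each remains maximal cyclic — hence its own centralizer, and malnormal — after the other extensions, so each step is an extension of centralizers along a maximal abelian subgroup, and $Q_2$ is again a $\Gamma$-limit group by \cite[Theorem 1.31]{Sela_diophantine7} (or \cite[Prop.~4.11]{CG_compactifying}), exactly as in the proof of Lemma~\ref{lem_double}; in particular it is residually $\Gamma$. For (ii): given a $\Gamma$-homomorphism $h\colon L_V*_\Gamma L_V\to\Gamma$, its restriction to $L_V^{(X)}$ is a $\Gamma$-homomorphism $L_V\to\Gamma$; since $T_{j,k}$ commutes with $c_j$ in $L_V$, the element $h(X_{j,k})$ commutes with $h(c_j)=c_j$, hence lies in $Z_\Gamma(c_j)=\grp{c_j}$ (centralizers of non-trivial elements in the torsion-free hyperbolic group $\Gamma$ are cyclic, and $\grp{c_j}$ is maximal). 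Likewise $h(Y_{j,k'})\in\grp{c_j}$, so the two images lie in a common abelian subgroup and $h([X_{j,k},Y_{j,k'}])=1$.

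The same argument with three copies and the relations $[X_{j,k},Z_{j,k'}]=[X_{j,k},Y_{j,k'}]=[Z_{j,k},Y_{j,k'}]=1$ gives $L_{V^3}\cong Q_3=\bbF_{3r}*\Gamma*_{\grp{c_1}}A_1^{V^3}*\cdots*_{\grp{c_l}}A_l^{V^3}$ with $A_j^{V^3}=\grp{c_j}\oplus N_j^{(X)}\oplus N_j^{(Z)}\oplus N_j^{(Y)}\simeq\grp{c_j}\oplus\bbZ^{3n_j}$. Finally, $Q_2$ and $Q_3$ are visibly free products of a free group (of rank $2r$, resp.\ $3r$) with a multiple extension of centralizers of $\Gamma$ along the non-conjugate maximal cyclic subgroups $\grp{c_j}$, the abelian vertex over $\grp{c_j}$ being $A_j^{V^2}$, resp.\ $A_j^{V^3}$; the associated star-shaped graph of groups — central rigid vertex $\Gamma$, a wedge of $2r$, resp.\ $3r$, edges with trivial stabiliser, and one edge with stabiliser $\grp{c_j}$ to each abelian vertex — is a standard JSJ decomposition by Lemma~\ref{lem_carac_JSJ}, just as in the proof of Theorem~\ref{thm_structure_V}; this is the content of Figure~\ref{GoG_V_times_V}.

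The genuinely load-bearing point is recognising the right answer, namely that the amalgamated abelian blocks $A_j^{(X)},A_j^{(Y)}$ (and $A_j^{(Z)}$) must fuse; this rests entirely on observation (ii), which is short. Once the candidate $Q_2$ (resp.\ $Q_3$) is in place, everything else — that it is a $\Gamma$-limit group, that it is the maximal residually $\Gamma$ quotient, and that the pictured graph of groups is its standard JSJ — is routine bookkeeping with the tools already assembled. (As a sanity check, $L_{V^2}=L_V*_\Gamma L_V$ precisely when all $n_j=0$; the reason $L_V$ itself does not satisfy the hypotheses of Lemma~\ref{lem_double} when some $n_j>0$ is that, in the star splitting of $L_V$, the element $c_j$ fixes every edge incident to the abelian vertex $A_j$, so that splitting is not $1$-acylindrical.)
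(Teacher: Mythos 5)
Your proof is correct and follows essentially the same route as the paper: compute via Lemma~\ref{lem_direct_product}, observe that every $\Gamma$-homomorphism from the iterated amalgam $L_V *_\Gamma \cdots *_\Gamma L_V$ to $\Gamma$ must send all $N_j$-type generators into $\grp{c_j}=Z_\Gamma(c_j)$ and hence kill the cross-commutators, and then argue that the resulting quotient is a $\Gamma$-limit group (hence residually $\Gamma$, hence equal to the maximal residually $\Gamma$ quotient). Your write-up is somewhat more detailed than the paper's — you spell out the iterated-extension-of-centralizers argument for why $Q_2$, $Q_3$ are $\Gamma$-limit groups, explicitly verify the JSJ claim via Lemma~\ref{lem_carac_JSJ}, and include a pertinent sanity check explaining why Lemma~\ref{lem_double} alone would not give this (the star splitting of $L_V$ is not $1$-acylindrical when some $n_j>0$) — but these are elaborations, not a different method.
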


\begin{figure}[ht!] 
\centering
\includegraphics[width=\linewidth]{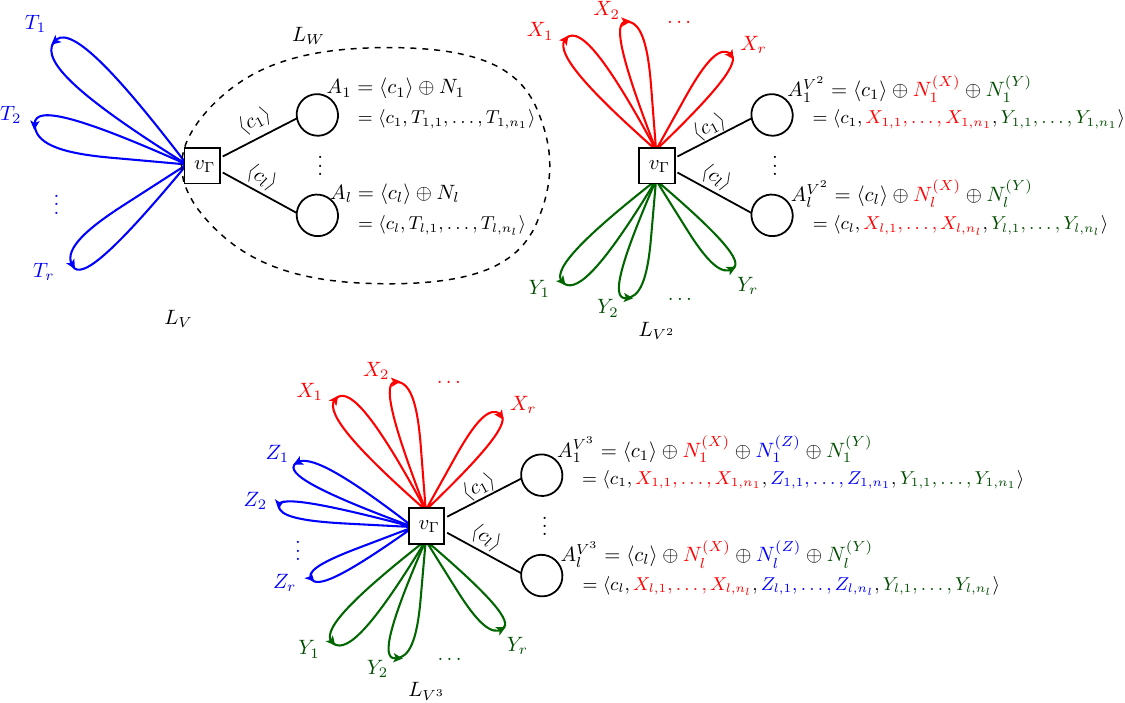}
\caption{The groups $L_V$, $L_{V^2}$, and $L_{V^3}$ with their generating sets, and the corresponding graphs of groups $\Lambda_V$, $\Lambda_{V^2}$ and $\Lambda_{V^3}$.} 
\label{GoG_V_times_V}
\end{figure}

\begin{proof}
By Lemma \ref{lem_direct_product}, $L_{V^2}$ is the maximal residually $\Gamma$ quotient of the double $D=L_V*_\Gamma \bar L_{V}$.
For any $\Gamma$-homomorphism $h\in \HOM[D]$, and any $j\leq l$, $h(N_j)$ and $h(\bar N_j)$ commute with $h(c_j)=h(\bar c_j)\neq 1$,
so commute together since $\Gamma$ is CSA. These commutation relations therefore hold in $L_{V^2}$.
Now the quotient of $D$ by these commutation relations is the fundamental group of the graph of groups
on Figure \ref{GoG_V_times_V}. Since the fundamental group of this graph of groups is a $\Gamma$-limit group, it is in particular residually $\Gamma$ and is thus equal to $L_{V^2}$.  
The case of $L_{V^3}$ is similar.
\end{proof}

\subsection{Multiplication law on the abelian part}
\label{sec_multiplication_abelian}

Recall that we defined a coordinate system $T_i, T_{j,k}\in L_V$ for which,
for all $j\leq l$, $$\ker h_{e|A_j}=N_j=\grp{T_{j,1}, \ldots, T_{j, n_j}}.$$
We now show that in these coordinates the abelian part of the multiplication law is standard.

\begin{prop}\label{prop_loi_abelienne}
With the bases chosen above, for all $j\leq l$, 
we have 
$$\mu^*(T_{j,k})=X_{j,k} Y_{j,k} \in L_{V^2}.$$
\end{prop}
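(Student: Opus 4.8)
The plan is to exploit the group law formula (Corollary~\ref{cor_magique}) together with associativity to pin down $\mu^*(T_{j,k})$. First I would show that $\mu^*(T_{j,k})$ lies in the abelian vertex group $A_j^{V^2}=\grp{c_j}\oplus N_j^{(X)}\oplus N_j^{(Y)}$ of $L_{V^2}$. The idea: evaluating at the neutral element in either coordinate recovers $T_{j,k}$, more precisely $(\id_X^*\otimes h_e^*)\circ\mu^* = \id_{L_V}$ and $(h_e^*\otimes\id_Y^*)\circ\mu^* = \id_{L_V}$ where I write $h_e^*$ for the retraction $L_{V^2}\to L_V$ killing one factor. Since $T_{j,k}$ lies in the abelian vertex group $A_j = \grp{c_j}\oplus N_j$ of $L_V$ and is killed by $h_e$, both retractions send $\mu^*(T_{j,k})$ into the normal closure of $N_j$ in an appropriate sense; combined with the fact that $\mu^*$ is a monomorphism (because $\mu$ has a section) and with the bounded stretch corollary (Corollary~\ref{cor_bound2}), which bounds $\|\Delta_{a,b}^*(T_{j,k})\|_Y$ uniformly over $a,b$ for any action of $L_{V^2}$ with $\Gamma$ bounded, I would argue that $\mu^*(T_{j,k})$ cannot have any component outside the abelian vertex $A_j^{V^2}$: a nontrivial such component would, via a splitting of $L_{V^2}$ read off from Figure~\ref{GoG_V_times_V}, give unbounded translation length under suitable translates, contradicting the bound. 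So $\mu^*(T_{j,k})=c_j^{a_{j,k}}\cdot\prod_m X_{j,m}^{p_{j,k,m}}\cdot\prod_m Y_{j,m}^{q_{j,k,m}}$ for integers $a_{j,k},p_{j,k,m},q_{j,k,m}$, the product being in the abelian group $A_j^{V^2}$.

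Next I would use the two section conditions to fix most of these exponents. Applying $h_e^*\otimes\id_Y^*$ (kill the $X$-copy: $X_{j,m}\mapsto 1$, $Y_{j,m}\mapsto T_{j,m}$, $c_j\mapsto c_j$) to the displayed expression and using that this composite is $\id_{L_V}$ on $T_{j,k}$, I get $c_j^{a_{j,k}}\prod_m T_{j,m}^{q_{j,k,m}} = T_{j,k}$ in $A_j$, hence $a_{j,k}=0$ and $q_{j,k,m}=\delta_{k,m}$. Symmetrically, applying the retraction killing the $Y$-copy gives $a_{j,k}=0$ (again) and $p_{j,k,m}=\delta_{k,m}$. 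Therefore $\mu^*(T_{j,k}) = X_{j,k}\,Y_{j,k}$, using that $X_{j,k}$ and $Y_{j,k}$ commute in $A_j^{V^2}$ and that the exponent on $c_j$ vanishes.

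The main obstacle I anticipate is the first step: showing that $\mu^*(T_{j,k})$ genuinely lands inside the abelian vertex group $A_j^{V^2}$ rather than spreading into rigid parts, surface parts, the free factors, or other abelian vertices of $L_{V^2}$. For the free factors and other abelian vertices, a careful use of the retractions combined with the structure of $L_{V^2}$ from Lemma~\ref{lem_GoG_V_times_V} should suffice (an element killed by both coordinate retractions, lying over $c_j$, has no room to involve a different $c_{j'}$ or a free generator since those retract faithfully). The delicate point is ruling out components of unbounded complexity, which is exactly where Corollary~\ref{cor_bound2} enters: one chooses the metric space $Y$ to be the Bass--Serre tree of the one-edge splitting of $L_{V^2}$ along the edge $\grp{c_j}$ separating $A_j^{V^2}$ from the rest, notes $\Gamma$ is elliptic hence bounded, and observes that if $\mu^*(T_{j,k})$ had a nontrivial $A_j^{V^2}$-alternating component, then $\Delta_{a,b}^*\mu^*(T_{j,k})$ for varying $a,b$ (whose image under $\mu^*$-composed-with-translations corresponds to multiplying on the left and right inside $V$, i.e.\ to precomposing by $\rho_v^*$-type maps) would have translation length growing with the $N_j$-exponents one can realize, giving the contradiction. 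Once $\mu^*(T_{j,k})\in A_j^{V^2}$ is established, the rest is the elementary exponent bookkeeping above.
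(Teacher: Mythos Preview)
Your second step---the exponent bookkeeping via the two retractions $i_e^*$ and $i_e'^*$---is exactly what the paper does and is correct.

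Your first step, however, is massively overcomplicated, and the bounded stretch argument you sketch is both unnecessary and unclear as written. The paper's argument for $\mu^*(T_{j,k})\in A_j^{V^2}$ is a one-liner: since $\mu^*$ is a $\Gamma$-homomorphism, $\mu^*(c_j)=c_j$; since $[T_{j,k},c_j]=1$ in $L_V$, applying $\mu^*$ gives $[\mu^*(T_{j,k}),c_j]=1$ in $L_{V^2}$; and since $L_{V^2}$ is a $\Gamma$-limit group (hence CSA), the centralizer of $c_j$ in $L_{V^2}$ is its unique maximal abelian subgroup, namely $A_j^{V^2}$. This immediately gives $\mu^*(T_{j,k})=c_j^{q}u v$ with $u\in N_j^{(X)}$, $v\in N_j^{(Y)}$, and your retraction argument finishes.

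Your bounded stretch route has a genuine gap beyond being circuitous: Corollary~\ref{cor_bound2} bounds $\|\Delta_{a,b}^*(T_{j,k})\|_Y$ for actions of $L_V$ on $Y$, not actions of $L_{V^2}$. You propose taking $Y$ to be the Bass--Serre tree of a splitting of $L_{V^2}$, but there is no action of $L_V$ on that tree, and you never explain how a bound on translation lengths of elements of $L_V$ (the $\Delta_{a,b}^*(T_{j,k})$) constrains the normal form of $\mu^*(T_{j,k})$ inside $L_{V^2}$. The phrase ``$\Delta_{a,b}^*\mu^*(T_{j,k})$'' does not type-check: $\Delta_{a,b}^*$ is an automorphism of $L_V$, while $\mu^*(T_{j,k})\in L_{V^2}$. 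One could try to relate $\mu^*(T_{j,k})$ to its specializations $h_{(a,b)}(\mu^*(T_{j,k}))\in\Gamma$, but that circle of ideas is not what you wrote and would still require substantial work to rule out alternating components. Just use commutation with $c_j$.
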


\begin{proof} 
Fix $j\leq l$.
Since $T_{j,k}$ commutes with $c_j$ in $L_V$, 
$\mu^*(T_{j,k})$ commutes with $c_j$ in $L_{V^2}$, and we can write $\mu^*(T_{j,k})= c_j^{q_{j,k}} u_{j,k}v_{j,k}$ where $u_{j,k}\in N_j^{(X)}$ and $v_{j,k} \in N_j^{(Y)}$.

Denote by $i_e,i_e': V \to V^2$ the maps defined by $v \mapsto (e,v)$ and $v \mapsto (v,e)$.
At the level of function groups, $i_e^*:L_{V^2}\ra L_V$
is induced by the map $L_V*_\Gamma L_V\ra L_V$ which is $h_e$ on the left factor
and the identity on the right factor (sending $Y_i$ and $Y_{j,k}$ to $T_i$ and $T_{j,k}$).
In particular, $N_j^{(X)}\subset \ker i_e^*$.
Similarly, $i_e'^*(X_i)=T_i$, $i_e'^*(X_{j,k})=T_{j,k}$,
and $i_e'^*$ coincides with $h_e$ on the right factor, so $N_j^{(Y)}\subset \ker i_e'^*$.

Since $\mu \circ i_e = \mu \circ i'_e = \id_V$, we get that $i_e^* \circ \mu^* = i_e'^* \circ \mu^* = \id_{L_V}$.
Using that $u_{j,k}\in N_j^{(X)}$ lies in $\ker i^*_e$ we get
$$T_{j,k}=i_e^*(\mu^*(T_{j,k})) = i_e^*(c_j^{q_{j,k}} u_{j,k}v_{j,k})= c_j^{q_{j,k}} i_e^*(v_{j,k})$$
so $q_{j,k}=0$ and $i_e^*(v_{j,k})=T_{j,k}$ hence $v_{j,k}=Y_{j,k}$.

For similar reasons, $u_{j,k}= X_{j,k}$. 
We have proved that $\mu^*(T_{j,k})=X_{j,k}Y_{j,k}$ as desired.

\end{proof}

\subsection{A triangular representation}
\label{subsec_triangular}

For $x,y \in V$, recall that we denote $\Delta_{x,y}: V \to V; v \mapsto x \fois v \fois y$. We consider the dual automorphism $\Delta^*_{x,y}\in\Aut_\Gamma( L_V)$. 
The map $(x,y)\in V\times V\mapsto \Delta^*_{\inv x,y}$ 
is a representation of $V\times V$ in $\Aut_\Gamma(L_V)$ 
(noting that we have used the inverse $\inv x$ as the left multiplier).
The following result says that, up to conjugacy, this representation has triangular image. 

\begin{thm}\label{thm_triangulaire}
 Let $\Gamma$ be a torsion-free hyperbolic group.
Let $V$ be a \conn\ algebraic group over $\Gamma$ with function
group $L_V=L_{W}*\bbF_r$
where $L_W$ is the Grushko factor of $L_V$ relative to $\Gamma$
containing $\Gamma$.

Then there exist some free factors $L_V^r=L_V\supset L_V^{r-1}\supset \dots \supset L_V^0=L_W$
and a finite index subgroup $V_0\subset V$
such that for all $i\leq r$, $L_V^{i}\simeq L_V^{i-1} * \bbZ$
and $L_V^i$ is invariant under $\Delta_{x,y}^*$ for all $x,y\in V_0$.
\end{thm}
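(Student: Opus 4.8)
The plan is to analyse the representation $\bar\rho\colon V\times V\to\Aut_\Gamma(L_V)$, $(x,y)\mapsto\Delta^*_{\inv x,y}$, and to build the required flag of free factors by feeding the bounded stretch estimate into the Handel--Mosher subgroup theorem. First I would record a soft fact used throughout: \emph{if $A$ is a free factor of $L_V$ with $\Gamma\subset A$, then any $\phi\in\Aut_\Gamma(L_V)$ preserving the conjugacy class of $A$ preserves $A$ itself.} Indeed, writing $L_V=A*B$ and looking at the (trivial-edge-group) Bass--Serre tree $T$ of this splitting, $\phi(A)=gAg\m$ for some $g$; then $\Gamma$ fixes the vertex stabilised by $A$ (because $\Gamma\subset A$) and also the vertex stabilised by $gAg\m=\phi(A)\supset\phi(\Gamma)=\Gamma$, and as edge stabilisers are trivial $\Gamma$ fixes a unique vertex, so these two vertices coincide and $g\in A$. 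In particular, since by Proposition~\ref{prop_freeprod} $L_W$ is the unique Grushko factor of $L_V$ (relative to $\Gamma$) admitting a conjugate that contains $\Gamma$, the factor $L_V^0:=L_W$ is invariant under $\Delta^*_{x,y}$ for all $x,y\in V$.

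Next I would use bounded stretch to show that no element of $\bar\rho(V\times V)$ is fully irreducible relative to the free factor system $\{[L_W]\}$. If $\phi=\Delta^*_{\inv x,y}$ were, let $T_+$ be its attracting $\bbR$-tree (replacing $\phi$ by $\phi\m$ if needed so that $\phi$-iteration expands translation lengths); the components of the invariant free factor system, hence $L_W$ and $\Gamma$, act elliptically on $T_+$, so $\Gamma$ has bounded orbits there. Pick $f\in L_V$ with $\|f\|_{T_+}>0$. Then $\|\phi^n(f)\|_{T_+}\to\infty$ exponentially, while $\phi^n=(\Delta^*_{\inv x,y})^n=\Delta^*_{\inv{(x^{\fois n})},\,y^{\fois n}}$, so this contradicts Corollary~\ref{cor_bound2} for the isometric action of $L_V$ on $Y=T_+$.

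With fully irreducible elements excluded, I would invoke the Handel--Mosher subgroup theorem in the form for free products relative to a free factor \cite{HaMo_subgroup,GH_boundaries}: a subgroup of the relative outer automorphism group of $(L_V;L_W)$ containing no fully irreducible element has a finite-index subgroup fixing the conjugacy class of a proper free factor system refining $\{[L_W]\}$. Iterating this --- at each stage restricting to the invariant free factor, where the no-fully-irreducible property survives because translation lengths are conjugacy invariant and any tree for a free factor extends to a tree for $L_V$ in which $\Gamma$ is elliptic, so Corollary~\ref{cor_bound2} still applies --- and reassembling the chains produced recursively inside the components of each invariant free factor system, I would obtain a maximal chain of free factors $L_W=L_V^0\subsetneq L_V^1\subsetneq\dots\subsetneq L_V^r=L_V$, each containing $\Gamma$, whose conjugacy classes are fixed by a finite-index subgroup of $\bar\rho(V\times V)$. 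By the first paragraph this subgroup preserves each $L_V^i$ on the nose; maximality forces $L_V^{i-1}$ to be a maximal proper free factor of $L_V^i$, whose complementary free factor is freely indecomposable and hence infinite cyclic, so $L_V^i\simeq L_V^{i-1}*\bbZ$ (in particular the chain has exactly $r$ steps). Finally, pulling this finite-index subgroup back to $V\times V$ gives a finite-index $W\le V\times V$ such that $\Delta^*_{x,y}$ preserves every $L_V^i$ whenever $(x,y)\in W$; since $W$ contains $\bigl(W\cap(V\times\{e\})\bigr)\cdot\bigl(W\cap(\{e\}\times V)\bigr)$, intersecting the projections yields a finite-index $V_0\le V$ with $x,y\in V_0\Rightarrow(x,y)\in W$, as required.

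The hard part will be the iteration in the third paragraph: upgrading the Handel--Mosher conclusion (``virtually fixes \emph{some} proper free factor system'') to an equivariant \emph{complete} chain of corank-one free factors. This calls for a careful induction over the poset of free factor systems of the free product $L_V$ relative to $L_W$, treating separately the component containing $\Gamma$ (where the $\Gamma$-equivariance constraint persists) and the other components (where it is vacuous, but where conjugacy-invariance of translation lengths still allows the bounded-stretch argument to run), checking that the no-fully-irreducible property is inherited by all the restrictions involved, and using the Bass--Serre argument of the first paragraph at every reassembly to pass from invariant conjugacy classes to invariant subgroups.
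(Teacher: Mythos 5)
Your overall strategy --- bounded stretch $\Rightarrow$ no fully irreducible element, then Handel--Mosher to produce an invariant free factor system, then iterate --- is the same as the paper's, and several of your ingredients match the paper closely (the ``soft fact'' that invariance up to conjugacy of a free factor containing $\Gamma$ upgrades to genuine invariance; the extraction of $V_0$ from a finite-index subgroup of $V\times V$ at the end; the extension of a tree for a free factor to a tree for $L_V$ in order to keep applying Corollary~\ref{cor_bound2} during the iteration).

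However, there is a genuine gap where the two proofs diverge. From Handel--Mosher you only get that the maximal $H$-periodic free factor system $\calf$ (relative to $\{[L_W]\}$) is \emph{sporadic}, and sporadic allows two cases: $L_V=A*\bbZ$ with $\calf=\{[A]\}$, or $L_V=A*B$ with $\calf=\{[A],[B]\}$ and both $A,B$ nontrivial (with $L_W\subset A$). Your proposal does not rule out the second case; instead you plan to ``reassemble the chains produced recursively inside the components.'' This reassembly does not go through: if $H_0$ fixes $[A]$ (hence $A$, by your soft fact) and $[B]$ only up to conjugacy, and recursively you split $B$ into cyclic factors $\bbZ_1,\dots,\bbZ_k$ whose conjugacy classes are fixed, the intermediate free factors $A*\bbZ_1*\dots*\bbZ_j$ need not be preserved even up to conjugacy. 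Concretely, in $L_V=A*\grp{T_1,T_2}$, the subgroups $A*\grp{T_1}$ and $A*\grp{T_1T_2}$ are corank-one free factors with the same relative Grushko decomposition but are \emph{not} conjugate in $L_V$, so an automorphism sending $\grp{T_1}$ to a conjugate of itself can move $A*\grp{T_1}$ to a non-conjugate free factor. Bounded stretch alone cannot exclude the $A*B$ case either: in the tree for the splitting $A*B$, everything in $B$ (and in $A$) is elliptic, so Corollary~\ref{cor_bound2} is vacuous there.

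What the paper uses here --- and what your proposal is missing --- is the algebraic-group structure of $V$ itself, via the discriminating family: since $\rho^*_v=\Delta^*_{1,v}\in H$, Lemma~\ref{lem_discriminating_rho} gives that $\{h\circ\alpha\mid\alpha\in H\}$ is discriminating for any $h\in\HOM$, and Lemma~\ref{lem_indice_fini} shows the same for the finite-index subgroup $H_0$ fixing $\calf$. Choosing $h$ to kill $B$, every $h\circ\alpha$ with $\alpha\in H_0$ kills a conjugate of $B$, contradicting discrimination unless $B=\{1\}$. This forces $\calf=\{[A]\}$ with $L_V=A*\bbZ$ at every stage, so the chain is produced one corank at a time with no multi-component reassembly needed. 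You need to insert this discriminating argument at each step of the iteration (the paper also notes that some care is needed at later stages since $L_V^i$ is not \emph{a priori} the function group of an algebraic group; this is handled by extending the tree of the $L_V^i$-splitting to an $L_V$-tree and using the same $\{h\circ\alpha\}$ family).
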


We will rely on the following result.

\begin{thm}[{\cite{HaMo_subgroup} \cite[Theorem 1]{GH_boundaries}}]\label{thm_iwip}
Let $H\subset \Out(G_1*\dots*G_k*\bbF_r)$ be a subgroup that preserves the conjugacy class of each $G_i$.
Let $\calf$ be a maximal $H$-periodic free factor system relative to $\{G_1,\dots, G_k\}$.

If $\calf$ is non-sporadic, then $H$ contains an element $\phi$ preserving $\calf$ and which is fully irreducible relative to $\calf$.
\end{thm}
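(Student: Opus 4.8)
The plan is to prove a Kolchin-type ``unipotence'' statement for the representation $(x,y)\mapsto\Delta_{x,y}^*$, building the flag $L_W=L_V^0\subset\dots\subset L_V^r=L_V$ one $\bbZ$-free-factor at a time: Theorem~\ref{thm_iwip} produces each new factor, while the bounded stretch estimate of Corollary~\ref{cor_bound2} rules out the only obstruction, fully irreducible automorphisms.

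\textbf{Setup.} Each $\Delta_{x,y}=\lambda_x\circ\rho_y$ is an algebraic automorphism of $V$, so $\Delta_{x,y}^*\in\Aut_\Gamma(L_V)$ fixes $\Gamma$ pointwise, and $(\Delta_{x,y}^*)^n=\Delta_{x^n,y^n}^*$, so the image of $V\times V$ is a subgroup of $\Aut_\Gamma(L_V)$ stable under taking powers. Since $L_W=L_V^0$ is the Grushko factor of $L_V$ relative to $\Gamma$ containing $\Gamma$, and two conjugates of a proper free factor that meet nontrivially coincide, any $\Gamma$-automorphism of $L_V$ preserves $L_W$; more generally, I record the elementary fact that \emph{a $\Gamma$-automorphism of $L_V$ preserving the conjugacy class of a proper free factor $L'$ with $L_W\subseteq L'$ preserves $L'$ itself} (the conjugating element normalizes $L'$). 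So it suffices to produce free factors $L_V^i=L_W*\bbF_i$ ($0\le i\le r$) with each conjugacy class $[L_V^i]$ invariant under $\{\Delta_{x,y}^*\mid x,y\in V_0\}$ for a finite index $V_0\le V$; invariance of $L_V^i$ itself then follows.

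\textbf{Bounded stretch forbids fully irreducibles.} Suppose that inside some free factor $L'\supseteq L_W$ of $L_V$, invariant under a finite index $V'\le V$, the image $\phi$ in $\Out(L')$ of some $\Delta_{a,b}^*$ were fully irreducible relative to a non-sporadic free factor system $\calf$ in which $L_W$ lies in a peripheral component. Then $\phi$ admits an attracting $\bbR$-tree $T_\phi$ (in the closure of the relative outer space, or the $\bbR$-tree dual to the invariant foliation in the geometric case, as with $T_\beta$ in the proof of Theorem~\ref{thm_noQH}): an isometric $L'$-action in which the $\calf$-components, hence $L_W\supseteq\Gamma$, are elliptic. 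Writing $L_V=L'*F$ with $F$ free and attaching the free generators of $F$ as loops promotes this to an isometric $L_V$-action on a tree $\hat T$ with $\|g\|_{\hat T}=\|g\|_{T_\phi}$ for $g\in L'$ and $\Gamma$ still elliptic, so Corollary~\ref{cor_bound2} applies: for $f\in L'$ there is a constant $C_f$ with $\|\Delta_{x^n,y^n}^*(f)\|_{\hat T}\le C_f$ for all $n$. Choosing $f\in L'$ not elliptic in $T_\phi$, the scaling property of $T_\phi$ forces $\|\phi^n(f)\|_{T_\phi}\to\infty$; but $\phi^n(f)$ is conjugate in $L'$ to $\Delta_{a^n,b^n}^*(f)$ and translation length is a conjugacy invariant, so $\|\phi^n(f)\|_{T_\phi}=\|\Delta_{a^n,b^n}^*(f)\|_{\hat T}\le C_f$, a contradiction. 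Hence the image of $V'\times V'$ in $\Out(L')$ contains no element fully irreducible relative to a non-sporadic free factor system. The same argument (extending a tree action of a free factor $P$ of $L'$ first to $L'$, then to $L_V$, again using conjugacy-invariance of translation length) shows that bounded stretch descends along free products: the representations induced into $\Out(P)$ by such $V'$ also contain no fully irreducible automorphism relative to a non-sporadic system.

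\textbf{Peeling off the factors.} I induct downward on $i$, the base case $i=r$ being $L_V^r=L_V$, $V_r=V$. Given $L_V^i=L_W*\bbF_i$ invariant under a finite index $V_i\le V$ with $i\ge1$, let $H_i$ be the image of $V_i\times V_i$ in $\Out(L_V^i)$; it preserves $[L_W]$. The proper $H_i$-periodic free factor systems of $L_V^i$ relative to $\{L_W\}$ form a nonempty (it contains $\{[L_W]\}$) poset of bounded height; fix a maximal one $\calf$. If $\calf$ were non-sporadic, Theorem~\ref{thm_iwip} would put an element of $H_i$ fully irreducible relative to $\calf$, contradicting the previous step; so $\calf$ is sporadic. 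Because $L_W$ is freely indecomposable relative to $\Gamma$ (Proposition~\ref{prop_freeprod}), every component of $\calf$ not carrying a conjugate of $\Gamma$ is a free group, and the unique component $C$ carrying $\Gamma$ can be taken to contain $L_W$. As $\calf$ is maximal proper, either $\calf=\{[C]\}$ with $C$ of corank one --- then set $L_V^{i-1}:=C$, $V_{i-1}:=V_i$ --- or $\calf$ has a further component $P\cong\bbF_b$ with $b\ge1$, and the complementary free factor $M'$ (everything but $P$) contains $L_W$. In the latter case pass to the finite index subgroup $V_{i-1}'\le V_i$ preserving the conjugacy class of each component; then $M'$ is invariant, and the induced representation of $V_{i-1}'\times V_{i-1}'$ in $\Out(P)$ contains no fully irreducible relative to a non-sporadic system, so by downward induction on the rank $b$ the free group $P$ carries a corank-one free factor with conjugacy class invariant under a finite index subgroup; set $L_V^{i-1}:=M'*(\text{that subfactor of }P)$, a corank-one free factor of $L_V^i$ containing $L_W$. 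In both cases, after the corresponding finite index passage $V_{i-1}$, the class $[L_V^{i-1}]$ is invariant, hence --- as $\Gamma\subseteq L_W\subseteq L_V^{i-1}$ --- so is $L_V^{i-1}$. Iterating to $i=0$ yields the flag, and $V_0:=\bigcap_i V_i$ works simultaneously for every $L_V^i$.

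\textbf{Main obstacle.} The crux is the stretch step: making rigorous that a fully irreducible automorphism relative to a non-sporadic free factor system genuinely sends translation lengths to infinity on its attracting $\bbR$-tree, uniformly enough to contradict Corollary~\ref{cor_bound2}, and that this $\bbR$-tree promotes to an $L_V$-action keeping $\Gamma$ elliptic. This calls on the structure theory of relative train tracks and attracting trees (covering both the non-geometric and geometric cases), in the same spirit as the surface-twist computation behind Theorem~\ref{thm_noQH}. The remaining work --- the enumeration of sporadic maximal free factor systems, the nested induction on rank, and the repeated passage from conjugacy-class invariance to honest invariance and from $L_V$ to its free factors --- is routine but must be carried out with care.
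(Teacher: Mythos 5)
You have proved the wrong statement. The statement you were asked about is Theorem~\ref{thm_iwip}, a general result about subgroups $H$ of $\Out(G_1*\dots*G_k*\bbF_r)$ and maximal $H$-periodic free factor systems, with no mention of algebraic groups, $V$, $L_V$, $\Delta_{x,y}^*$, or bounded stretch. What you wrote is instead an argument for Theorem~\ref{thm_triangulaire} --- the theorem in Section~\ref{sec_mult_table} that \emph{uses} Theorem~\ref{thm_iwip} as a black box. Indeed you say so explicitly (``Theorem~\ref{thm_iwip} produces each new factor''), which makes your attempt circular as a proof of Theorem~\ref{thm_iwip}: you cannot invoke the statement you are supposed to be proving.

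There is also nothing in the paper to compare against: Theorem~\ref{thm_iwip} is not proved here; it is cited from Handel--Mosher \cite{HaMo_subgroup} and from \cite[Theorem 1]{GH_boundaries}. A genuine proof would have to take place entirely in the setting of outer automorphisms of free products: one works with the (relative) free factor complex and Outer space of the free product $G_1*\dots*G_k*\bbF_r$, shows that if no element of $H$ preserving $\calf$ is fully irreducible relative to $\calf$ then the $H$-action on the relative free factor complex has bounded orbits (or an invariant finite set), and deduces --- using the structure of that complex and its hyperbolicity/loxodromic classification --- an $H$-periodic free factor system strictly refining $\calf$, contradicting maximality. None of the paper's apparatus ($L_V$, Corollary~\ref{cor_bound2}, the flag $L_V^i$) is relevant to that argument.

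As a secondary remark, the argument you did write is essentially the paper's own proof of Theorem~\ref{thm_triangulaire} (bounded stretch rules out fully irreducibles, the maximal periodic system is sporadic, peel off one $\bbZ$ at a time). But even read as a proof of that theorem it is looser than the paper's: the paper rules out the sporadic case $L_V=A*B$ with $B\neq 1$ directly via a discriminating-family argument (Lemma~\ref{lem_discriminating_rho} plus Lemma~\ref{lem_indice_fini}), whereas you attempt a nested induction on the rank of a free component $P$ whose base case $P\cong\bbZ$ is not addressed and whose inductive step (``by downward induction on the rank $b$ the free group $P$ carries a corank-one free factor \dots'') is asserted rather than justified. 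If you meant to prove Theorem~\ref{thm_triangulaire}, that sporadic case is the place you would need to tighten.
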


If $G$ is a free product $G=G_1 * \ldots *G_k*\F_r$, a free factor system for $G$ relative to $\{G_1, \ldots, G_k\}$ 
is a collection $\calf = \{[U_1], \ldots, [U_p]\}$ of conjugacy classes of non trivial proper subgroups $U_1, \ldots, U_p$ of $G$ for which $G=U_1*\ldots*U_p*\F_l$, and such that each $G_i$ is contained in a conjugate of some $U_j$. 
Such a system is said to be sporadic if it corresponds to a decomposition of the form $G=U_1*U_2$ or $G=U_1*\F_1=U_1*\Z$. 
We say that this free factor system is $H$-periodic if some finite index subgroup of $H$ preserves the conjugacy class of each $G_i$.

We compare free factor systems as follows: $\{[U_1], \ldots, [U_p]\}\preceq \{[U'_1], \ldots, [U'_{p'}]\}$
if for all $i\leq p$, there exists $j\leq p'$ such that $U_i$ is conjugate into $U'_j$.
For example, a free factor system $\calf$ of $G$ is relative to $\{[G_1], \ldots, [G_k]\}$  if $\{[G_1], \ldots, [G_k]\}\preceq\calf$.

\begin{proof}[Proof of Theorem \ref{thm_triangulaire}]
    The natural map $\Aut_\Gamma(L_V)\ra\Out(L_V)$ is injective (no nontrivial inner automorphism restricts to the identity on $\Gamma$), so we view $\Aut_\Gamma(L_V)$ as a subgroup of $\Out(L_V)$.

    Consider the subgroup $H=\{\Delta_{x,y}^* \mid x,y\in V\}$ of $\Aut_\Gamma(L_V)$.
    The group $H$ preserves $L_W$, so Theorem \ref{thm_iwip} applies
    to $H\subset \Out(L_W*\bbF_r)$.
    Let $\calf$ be a maximal $H$-periodic free factor system of $L_V$ relative to $L_W$. 
    We denote by $H_0$ the finite index subgroup of $H$ which preserves $\calf$. 
    
    We claim that $\calf$ is sporadic. Otherwise, by Theorem \ref{thm_iwip}, there exists $\phi=\Delta^*_{u,v} \in H_0$ which is fully irreducible relative to $\calf$.
    
    Consider a train-track representative $f:T\ra T$ of $\phi$ where $T$ is a tree corresponding to some free splitting of $L_V$ relative to $\calf$ \cite{FrMa_stretching}, $\lambda>1$ its scaling factor. 
    Let $g\in L_V$ be a non-$\calf$-peripheral element whose axis is $f$-legal,
    i.e. such that for all $n\geq 1$, $f^n$ is injective in restriction to the axis of $g$ 
    (this restriction is therefore a homothety of factor $\lambda^n$ to the axis of $\phi^n(g)$).
    Then for all $n\in\bbN$, one has $||\phi^n (g)||_T=\lambda^n||g||_T$.
    On the other hand, one has $\phi^n(g)=\Delta_{u^{\fois n},v^{\fois n}}^*(g)$. 
    Applying Corollary \ref{cor_bound2} to the space $Y=T$,
    we get that $||\Delta_{u^{\fois n},v^{\fois n}}^*(g)||_T$ is bounded by a constant $C_g$ which does not depend of $n$. This contradiction shows that $\calf$ is sporadic, so that either $L_V=A*B$ with $\calf=\{[A],[B]\}$ 
    or $L_V=A*\bbZ$ with $\calf=\{[A]\}$ with $\Gamma\subset A$ in both cases.
    
    We now show that the former cannot hold. 
    Consider a morphism $h \in \HOM$ which kills $B$. The set $\{ h \circ \alpha | \alpha \in H \}$ is discriminating by Lemma \ref{lem_discriminating_rho} since $\Delta^*_{1,v}=\rho^*_v$ lies in $H$ for all $v \in V$. 
    By Lemma \ref{lem_indice_fini}, the family $\{ h \circ \alpha | \alpha \in H_0 \}$ is still discriminating. Since all these morphisms kill $B$, this is a contradiction.
    
    Therefore, one can write $L_V=L_V^{r-1}*\bbZ$ with $\calf=\{[L_V^{r-1}]\}$ and $L_W\subset L_V^{r-1}$. 
    We let $V_{r-1}$ be a finite index subgroup of $V$ such that $\Delta^*_{x,y}$ preserves $L_V^{r-1}$ up to conjugacy for all $x,y \in V_{r-1}$.
    The subgroup $V_{r-1}$ can be built as follows: consider the group morphism $\Delta:V \times V \to H$ defined by $(x,y) \mapsto \Delta^*_{u^{-1},v}$; then $\Delta\m(H_0)\subset V\times V$  projects to finite index subgroups 
    of each of the two factors: take $V_{r-1}$ to be their intersection.
    
    Since $\Delta^*_{x,y}$ is the identity on $\Gamma$, we note that $L_V^{r-1}$ is actually invariant (not only up to conjugacy).
    
    We now iterate the argument, some care is needed because it is not a priori clear that the limit group $L_V^{r-1}$ corresponds to an algebraic group. 
    Assume that we have constructed $L_V=L_V^r\supset L_V^{r-1}\supset\dots\supset L_V^{i}$
    where for all $j\in \{i+1,\dots, r\}$, $L_V^{j}=L_V^{j-1}*\bbZ$, and for all $j\in \{i,\dots, r\}$, $L_V^j\supset L_W$ is invariant under $\Delta^*_{x,y}$ for all $x,y$ in a finite index subgroup $V_{j}\subset V$. 
    Assume $L_V^i\neq L_W$.
    
    Consider 
    $H' = \{(\Delta^*_{x,y})_{|L_V^i} \mid x,y \in V_i\}$. 
    Let $\calf'$ be a maximal $H'$-periodic free factor system of $L_V^i$ relative to $L_W$. As above, we claim that $\calf'$ is sporadic.
    If not, $H'$ contains an automorphism $\phi\in\Aut_\Gamma(L_V^i)$ which is fully irreducible relative to $\calf'$. 
    From $T'$ a $L_V^i$-tree carrying a train track representative $f$ of $\phi$, one can find an element $g\in A_i$ whose axis is $f$-legal, thus  
     $||\phi^n(g)||_{T'}\ra \infty$ as $n$ goes to infinity.
    
    One can extend the action of $L_V^i$ on $T'$ to an action of $L_V$ on a larger tree $Y$:
    consider the graph of groups $T'/L_V^i$ and add $r-i$ new loops with trivial edge group
    attached to the vertex whose vertex group contains $L_W$; one can take for $Y$ the Bass-Serre tree of the obtained graph of groups.
    By Corollary \ref{cor_bound2}, $||\phi^n(g)||_Y$ is bounded.
    Since $||\phi^n(g)||_Y=||\phi^n(g)||_{T'}$, we get a contradiction
    showing that $\calf'$ is sporadic.

    Arguing as above, $\calf'$ has to consist in a single factor 
    $\calf'=\{[L_V^{i-1}]\}$ with $L_V^i=L_V^{i-1}*\bbZ$ and $L_V^{i-1}\supset L_W$ is invariant by $\Delta_{x,y}^*$ 
    for $x,y$ in some finite index subgroup $V_{i-1}$ of $V$.

    Iterating the argument at most $r$ times, we get the result.
\end{proof}

By Theorem \ref{thm_triangulaire}, one can write $L_V$ as a free product $L_V=L_W*\grp{T_1,\dots,T_r}$ where $\grp{T_1,\dots,T_r}$ is free of rank $r$,
and such that for all $i\leq r$, the subgroup $L_V^i=L_W*\grp{T_1,\dots,T_i}$ 
is invariant under $\Delta^*_{x,y}$ for all $x,y$ in the finite index subgroup $V_0\subset V$. 

Since any automorphism of $L^{i-1}_V*\grp{T_i}$ preserving $L^{i-1}_V$
must send $T_i$ to $g T^{\pm 1}_i g'$ for some $g,g'\in L^{i-1}_V$,
Theorem \ref{thm_triangulaire} can be reformulated as follows:

\begin{cor}\label{cor_triangulaire}
Let $T_1,\dots,T_r\in L_V$ 
be as above. 

Then for all $i\leq r$, $x,y\in V_0$, the map $\Delta_{x,y}^*:L_V\ra L_V$ acts as follows
     $$\Delta_{x,y}^*:T_i \ \mapsto \ g_{i,x,y} T_i^{\pm 1}g'_{i,x,y}$$
     where $g_{i,x,y}, g'_{i,x,y}\in L_V^{i-1}=\grp{L_W,T_1,\dots,T_{i-1}}$.
\end{cor}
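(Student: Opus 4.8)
The plan is to translate the statement into a fact about automorphisms of the free product $L_V^i=L_V^{i-1}*\grp{T_i}$ that preserve the factor $L_V^{i-1}$. Fix $i\le r$ and $x,y\in V_0$, and set $\phi=\Delta^*_{x,y}\in\Aut_\Gamma(L_V)$. By Theorem~\ref{thm_triangulaire} and the remark following its proof, $\phi$ preserves each $L_V^j$ honestly (not merely up to conjugacy) and fixes $\Gamma$ pointwise; hence it restricts to a $\Gamma$-automorphism of $L_V^i=L_V^{i-1}*\grp{T_i}$ with $\phi(L_V^{i-1})=L_V^{i-1}$, and the corollary asks exactly that $\phi(T_i)\in L_V^{i-1}\,T_i^{\pm1}\,L_V^{i-1}$. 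Moreover, replacing $\phi$ by its composition with the $\Gamma$-automorphism of $L_V^i$ that extends $(\phi|_{L_V^{i-1}})^{-1}$ and fixes $T_i$ changes $\phi(T_i)$ only by afterwards applying the automorphism $\phi|_{L_V^{i-1}}$ of $L_V^{i-1}$, which preserves the set $L_V^{i-1}\,T_i^{\pm1}\,L_V^{i-1}$; so I may assume that $\phi$ restricts to the identity on $L_V^{i-1}$.

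The first step is to show that $\phi(T_i)$ is conjugate in $L_V^i$ to $T_i^{\pm1}$. Since $\phi$ is an automorphism fixing the free factor $L_V^{i-1}$, it carries the complementary free factor $\grp{T_i}$ to another free factor with $L_V^i=L_V^{i-1}*\grp{\phi(T_i)}$. Both $L_V^{i-1}*\grp{T_i}$ and $L_V^{i-1}*\grp{\phi(T_i)}$ are maximal free splittings of $L_V^i$ relative to $L_V^{i-1}$ (neither factor can be split further while keeping $L_V^{i-1}$ elliptic), hence both lie in the unique Grushko deformation space of $L_V^i$ relative to $L_V^{i-1}$, and therefore have the same elliptic subgroups. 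Consequently $\grp{\phi(T_i)}$ is conjugate into $L_V^{i-1}$ or into $\grp{T_i}$; but under the quotient $L_V^i\to L_V^i/\ngrp{L_V^{i-1}}\cong\bbZ$ the element $\phi(T_i)$ maps to a generator (as $\phi$ induces $\pm\mathrm{id}$ on this $\bbZ$), so $\grp{\phi(T_i)}$ is not conjugate into $L_V^{i-1}$. Hence $\phi(T_i)=g\,T_i^{\varepsilon}\,g^{-1}$ with $g\in L_V^i$ and $\varepsilon=\pm1$, the value of $\varepsilon$ being read off from the same $\bbZ$-quotient.

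The remaining point is to upgrade this to $g\in L_V^{i-1}$. Let $\rho\colon L_V^i\to L_V^{i-1}$ be the retraction killing $T_i$; then $\phi(T_i)\in\ker\rho=\ngrp{T_i}$, a free group carrying the $L_V^{i-1}$-equivariant free basis $\{aT_ia^{-1}\}_{a\in L_V^{i-1}}$ (with $L_V^{i-1}$ acting by conjugation, permuting the basis freely and transitively). Since $\phi$ fixes $L_V^{i-1}$ pointwise it preserves $\ngrp{T_i}$ and restricts there to an $L_V^{i-1}$-equivariant automorphism; writing $g=g'a$ with $g'\in\ngrp{T_i}$ and $a\in L_V^{i-1}$ one gets $\phi(T_i)=g'(aT_ia^{-1})^{\varepsilon}g'^{-1}$, and the requirement that the $L_V^{i-1}$-orbit of this element be a free basis of $\ngrp{T_i}$ forces — via a Stallings-graph/folding argument exploiting the equivariance, or simply by invoking the classical Fouxe-Rabinovitch description of the automorphisms of a free product $A*\bbZ$ fixing $A$ — that $g'$ commutes with $aT_ia^{-1}$, so that $\phi(T_i)=aT_i^{\varepsilon}a^{-1}$. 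Undoing the two reductions of the first paragraph yields $\Delta^*_{x,y}(T_i)=g_{i,x,y}T_i^{\pm1}g'_{i,x,y}$ with $g_{i,x,y},g'_{i,x,y}\in L_V^{i-1}$, as claimed. I expect this last step to be the main obstacle: conjugacy is invisible to translation-length estimates in trees — every conjugate of $T_i^{\pm1}$ has the same translation length in any $L_V$-tree — so the bounded-stretch machinery of the previous sections gives no leverage here, and one genuinely has to use the equivariant structure of $\ngrp{T_i}$ (or cite Fouxe-Rabinovitch).
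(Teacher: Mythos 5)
Your first paragraph (the reduction to $\phi|_{L_V^{i-1}}=\id$) is fine, but the second paragraph contains a fatal error, and as a result your argument tries to establish a statement that is actually false. You claim that $\phi(T_i)$ is conjugate in $L_V^i$ to $T_i^{\pm1}$, and your third paragraph sharpens this to $\phi(T_i)=aT_i^{\pm1}a^{-1}$ with $a\in L_V^{i-1}$. That is not what the corollary asserts --- it asserts only membership in the double coset $L_V^{i-1}\,T_i^{\pm1}\,L_V^{i-1}$ --- and it fails for the automorphisms that actually arise. For the standard group $V=\Gamma^r$ one computes $\rho_v^*(T_i)=T_i v_i$ with $v_i\in\Gamma$: this is a transvection, it fixes $L_V^{i-1}$ pointwise, yet $T_iv_i$ is cyclically reduced of length $2$ in the free product $\Gamma*\grp{T_i}$ and hence not conjugate to $T_i^{\pm1}$. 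The step that breaks is the deformation-space argument: the two splittings $L_V^{i-1}*\grp{T_i}$ and $L_V^{i-1}*\grp{\phi(T_i)}$, with the cyclic factors taken elliptic, need not lie in the same deformation space. Uniqueness of the relative Grushko deformation space concerns trees whose nontrivial vertex stabilizers are the non-cyclic freely indecomposable factors; it determines neither the conjugacy classes of the complementary $\bbZ$-factors nor their ellipticity. (Compare $F_2=\grp{a}*\grp{b}=\grp{a}*\grp{ab}$: both are maximal free splittings relative to $\grp{a}$, but $ab$ is hyperbolic in the Bass--Serre tree of the first, so the two trees are in different deformation spaces.) Everything downstream of this step, including the equivariant-basis argument of your third paragraph, is therefore moot.

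For comparison, the paper offers no proof here: the corollary is stated as a direct reformulation of Theorem \ref{thm_triangulaire}. The content implicitly invoked is the classical fact that an automorphism of a free product $B*\grp{t}$ carrying the factor $B$ onto itself must send $t$ into $Bt^{\pm1}B$ (this follows from the Fouxe--Rabinovitch generators of the automorphism group of a free product, or from a Nielsen/normal-form argument showing that $\grp{B,w}=B*\grp{t}$ forces $w\in Bt^{\pm1}B$). If you want to write out a proof, that double-coset statement --- not a conjugacy statement --- is the one to aim for; your closing remark that bounded stretch gives no leverage is correct but beside the point, since no bounded-stretch input is needed once Theorem \ref{thm_triangulaire} has produced the invariant chain of free factors.
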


\subsection{Formal triangularization}\label{sec_formal}

Corollary \ref{cor_triangulaire} says that in an appropriate coordinate system
$\Delta_{x,y}^*$ maps $T_i$ to $g_{i,x,y} T_i^{\pm 1}g'_{i,x,y}$,
where $g_{i,x,y}$ and $g'_{i,x,y}$
are elements which can be written as $\Gamma$-words on $T_1,\dots,T_{i-1}$ and on the abelian generators $T_{j,k}$;
but it does not say anything about the dependence of $g_{i,x,y}$ and $g'_{i,x,y}$
with respect to $x$ and $y$.
Using the algebraic group structure, we are going to prove that
this dependence is algebraic, i.e. that $g_{i,x,y}$ and $g'_{i, x, y}$ are given by $\Gamma$-words in the coordinates of $x$, the coordinates of $y$, the generators $T_1, \ldots T_{i-1}$ and the abelian generators $T_{j,k}$,
see Proposition \ref{prop_M_etoile} for the precise statement.

Let us now introduce some notation to state our result properly.

We denote by $T_i, T_{j,k}\in L_V$ the coordinate system given by Proposition \ref{prop_loi_abelienne} and Corollary \ref{cor_triangulaire}.
In particular $L_V = L_W * \grp{T_1, \ldots T_r \mid \; }$ where 
$L_W$ is generated by $\Gamma$ and the abelian generators $T_{j,k}$
(see Figure \ref{GoG_V_times_V}).
We denote by $X_i$, $X_{j,k}$, $Y_i$, $Y_{j,k}$
(resp.\ $X_i$, $X_{j,k}$, $Z_i$, $Z_{j,k}$, $Y_i$, $Y_{j,k}$)
the corresponding coordinate system on $L_{V^2}$ (resp.\ on $L_{V^3}$).

We denote by 
$L_{V}^{(X)}=\grp{\Gamma,X_{i},X_{j,k}|\; j\leq l, k\leq n_j,i\leq r}$
\newcommand{\LV}[2]{L_V^{(#1,#2)}}
the copy of $L_V$ in $L_{V^2}$ or $L_{V^3}$ (according to the context)
generated by the $X$ variables (in red on Figure \ref{GoG_V_times_V}),
and we define $L_{V}^{(Z)}$ and $L_{V}^{(Y)}$ similarly 
(where the $Z$ version only makes sense in $L_{V^3}$).

We then define the corresponding subgroups
$L_{W}^{(X)}=\grp{\Gamma,X_{j,k}|\; j\leq l,k\leq n_j}\subset L_V^{(X)}$,
and $\LV{i}{X}=\grp{L_W^{(X)},X_1,\dots,X_i}\subset L_V^{(X)}$ for $i\leq r$
the copies of $L_W$ and $L_V^i$ in $L_{V}^{(X)}$.
We define $L_{W}^{(Z)}$, $\LV{i}{Z}$,
$L_{W}^{(Y)}$, and $\LV{i}{Y}$ similarly.
Thus, the subgroup $\grp{L_{V}^{(X)},\LV{i-1}{Z},L^{(Y)}_{V}}\subset L_{V^3}$
appearing in the next statement is the set of elements of $L_{V^3}$
that do not involve the generators $Z_{i},\dots,Z_{r}$.

\begin{prop} \label{prop_M_etoile}
    The map $M^*:L_{V}\ra L_{V^3}$ sends
    $T_i$ to $g_i Z_{i}{}^{\pm 1} g'_i$
    where the elements $g_i,g'_i\in L_{V^3}$ lie in $\grp{L_{V}^{(X)},\LV{i-1}{Z},L^{(Y)}_{V}}\subset L_{V^3}$.
\end{prop}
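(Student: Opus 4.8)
The plan is to relate $M^*$ to the automorphisms $\Delta_{x,y}^*$ of $L_V$, whose triangular shape is given by Corollary~\ref{cor_triangulaire}. For $x,y\in V$ I would use the algebraic map $\iota_{x,y}\colon V\ra V^3$, $z\mapsto(x,z,y)$. Then $M\circ\iota_{x,y}=\Delta_{x,y}$, so by functoriality $\iota_{x,y}^*\circ M^*=\Delta_{x,y}^*$. Here $\iota_{x,y}^*\colon L_{V^3}\ra L_V$ is the $\Gamma$-homomorphism restricting to $h_x$ on $L_V^{(X)}$, to $h_y$ on $L_V^{(Y)}$, and to the canonical identification $L_V^{(Z)}\ra L_V$ (sending $Z_i\mapsto T_i$, $Z_{j,k}\mapsto T_{j,k}$); in particular it maps $B:=\grp{L_V^{(X)},\LV{i-1}{Z},L_V^{(Y)}}$ onto $L_V^{i-1}=\grp{L_W,T_1,\dots,T_{i-1}}$, and maps $Z_k\mapsto T_k$ for $k\geq i$. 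I would also use that for $z\in V$ one has $h_z\circ\iota_{x,y}^*=h_{(x,z,y)}$, the evaluation homomorphism of $L_{V^3}$ at the point $(x,z,y)\in V^3$.

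Next I would record the free product decomposition $L_{V^3}=B*\grp{Z_i}*\grp{Z_{i+1}}*\dots*\grp{Z_r}$: this is exactly the remark preceding the statement (that $B$ is the set of elements of $L_{V^3}$ not involving $Z_i,\dots,Z_r$), and it follows from the graph-of-groups description of $L_{V^3}$ in Lemma~\ref{lem_GoG_V_times_V}. Correspondingly $L_V=L_V^{i-1}*\grp{T_i}*\dots*\grp{T_r}$, and $\iota_{x,y}^*$ carries the free factors $B,\grp{Z_i},\dots,\grp{Z_r}$ of $L_{V^3}$ onto the pairwise distinct free factors $L_V^{i-1},\grp{T_i},\dots,\grp{T_r}$ of $L_V$ respectively.

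Then I would set $w_i:=M^*(T_i)$ and write $w_i=s_1\cdots s_m$ in reduced form for $L_{V^3}=B*\grp{Z_i}*\dots*\grp{Z_r}$. The letters $s_t$ lying in $B$ form a finite set of nontrivial elements of $L_{V^3}$. Since $V_0^3$ is a finite-index subgroup of $V^3$, hence dense by Lemma~\ref{lem_fidx_dense}, the evaluations $\{h_{(x,z,y)}\mid(x,z,y)\in V_0^3\}$ are discriminating on $L_{V^3}$ by Lemma~\ref{lem_discriminating_V0}; so I can choose $(x,z,y)\in V_0^3$ such that $h_{(x,z,y)}$, and therefore $\iota_{x,y}^*$, does not kill any of those $B$-letters. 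Then in $\iota_{x,y}^*(w_i)=\iota_{x,y}^*(s_1)\cdots\iota_{x,y}^*(s_m)$ every factor is nontrivial (the $Z$-letters go to nonzero powers of the free generators $T_k$), each lies in a single free factor of $L_V$, and consecutive factors lie in distinct ones; hence this is a reduced word in $L_V=L_V^{i-1}*\grp{T_i}*\dots*\grp{T_r}$ whose letters outside $L_V^{i-1}$ are exactly the images of the $Z$-letters of $w_i$.

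To finish, I would compare this with $\iota_{x,y}^*(w_i)=\Delta_{x,y}^*(T_i)=g\,T_i^{\pm1}\,g'$ where $g,g'\in L_V^{i-1}$, which holds by Corollary~\ref{cor_triangulaire} since $x,y\in V_0$: regardless of whether $g$ or $g'$ is trivial, the reduced form of $g\,T_i^{\pm1}\,g'$ for $L_V=L_V^{i-1}*\grp{T_i}*\dots*\grp{T_r}$ has exactly one letter outside $L_V^{i-1}$, namely $T_i^{\pm1}$. By uniqueness of reduced forms, $w_i$ has exactly one $Z$-letter, which must therefore be $Z_i^{\pm1}$; and since no two consecutive letters of $w_i$ can both lie in $B$, the at most one $B$-letter before it and the at most one after it give $M^*(T_i)=g_iZ_i^{\pm1}g_i'$ with $g_i,g_i'\in B=\grp{L_V^{(X)},\LV{i-1}{Z},L_V^{(Y)}}$, as required. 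The point where some care is needed is that $\iota_{x,y}^*$ may collapse individual $B$-letters of $w_i$; this is exactly why the triple $(x,z,y)$ must be selected by a discrimination argument rather than chosen arbitrarily, after which the remainder is routine bookkeeping with normal forms in free products.
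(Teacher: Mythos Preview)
Your proof is correct and follows essentially the same approach as the paper's: both reduce to comparing normal forms via the identity $\iota_{x,y}^*\circ M^*=\Delta_{x,y}^*$ and both use a discrimination argument (your inlined version is the paper's Lemma~\ref{lem_collection}) to guarantee that the relevant letters survive under $\iota_{x,y}^*$. The only cosmetic difference is that the paper works with the $i$-independent free product $L_{V^3}=U*\grp{Z_1,\dots,Z_r}$ where $U=\grp{L_V^{(X)},L_W^{(Z)},L_V^{(Y)}}$, whereas you use the $i$-dependent refinement $B*\grp{Z_i}*\dots*\grp{Z_r}$; your choice makes the final identification with $L_V^{i-1}*\grp{T_i}*\dots*\grp{T_r}$ slightly more direct, but the substance is the same.
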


As mentioned above, this result says that 
the elements $g_{i,x,y}$ and $g'_{i,x,y}$ given in Corollary \ref{cor_triangulaire}
depend algebraically 
on $x$ and $y$ (they are now represented by fixed words on $X$ and $Y$).
Additionally, the restriction $x,y\in V_0$ appearing in Corollary \ref{cor_triangulaire} has vanished.

Given $x, y \in V$ we denote $i_{x,y}: V \to V^3$ the map defined by $v \mapsto (x, v, y)$. 
Its dual map $i^*_{x, y}: L_{V^3} \to L_V$ can be described as follows: 
if $h_{x},h_y\in\HOM$ denote the $\Gamma$-homomorphisms corresponding to $x$ and $y$,
the restrictions of $i^*_{x,y}$ on $L_V^{(X)}$ and $L_V^{(Y)}$ coincides with $h_{x}$ and $h_y$
(with the natural identification)
 and its restriction to $L^{(Z)}_{V}$ is the identity (sending $Z_i$ to $T_i$ and $Z_{j,k}$ to $T_{j,k}$). 
Note for future use that this implies that $i^*_{x,y}$ maps the group $U=\grp{L^{(X)}_{V},L^{(Z)}_{W},L^{(Y)}_{V}}$ in $L_W$.

To prove Proposition \ref{prop_M_etoile} we will need the following lemma. Recall that $V_0$ is the finite index subgroup of $V$ from Theorem \ref{thm_triangulaire}: for any $x,y\in V_0$, the map $\Delta_{x,y}^*$ is triangular.

\begin{lem} \label{lem_collection}
       The collection of  $\Gamma$-homomorphisms $i^*_{x,y}: L_{V^3} \to L_V$
       with $(x,y)\in V_0^2$ is a discriminating family.
\end{lem}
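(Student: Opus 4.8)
The plan is to reduce the assertion to the discriminating property of the evaluation homomorphisms on $L_{V^3}$, and then to deduce the latter from the Zariski-density of a suitable finite-index subgroup of $V^3$.

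The first step is the elementary observation that for all $x,y\in V$ and all $z\in V$ one has $h_z\circ i^*_{x,y}=h_{i_{x,y}(z)}=h_{(x,z,y)}\in\HOM[L_{V^3}]$: this is the general identity $h_v\circ F^*=h_{F(v)}$ valid for any algebraic map $F$ (both sides send $f$ to $f(F(v))$). Since $L_V$ is residually $\Gamma$, an element $g\in L_{V^3}$ satisfies $i^*_{x,y}(g)\neq 1$ as soon as $h_{(x,z,y)}(g)\neq 1$ for some $z\in V$. Hence, using the description of discriminating families in terms of finitely many nontrivial elements, it is enough to prove that the family of evaluation homomorphisms $\{h_p\mid p\in V_0\times V\times V_0\}$ is discriminating on $L_{V^3}$.

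For the second step, I would note that $V^3=V\times V\times V$ is an irreducible variety (a product of irreducible varieties, as in the remark following Lemma~\ref{lem_direct_product}) and is a connected algebraic group (a direct product of connected algebraic groups, cf.\ Example~\ref{ex_general}). The subset $V_0\times V\times V_0$ is a subgroup of $V^3$ of finite index $[V:V_0]^2$ (not assumed algebraic), so Lemma~\ref{lem_fidx_dense} shows it is Zariski-dense in $V^3$. Applying the second assertion of Lemma~\ref{lem_discriminating_V0} to the irreducible variety $V^3$ and to this dense subset yields precisely that $\{h_p\mid p\in V_0\times V\times V_0\}$ is discriminating on $L_{V^3}$.

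Combining the two steps finishes the argument: given nontrivial $g_1,\dots,g_n\in L_{V^3}$, choose $p=(x,z,y)\in V_0\times V\times V_0$ with $h_p(g_m)\neq 1$ for all $m$; then $h_z(i^*_{x,y}(g_m))=h_p(g_m)\neq 1$, so $i^*_{x,y}(g_m)\neq 1$ for all $m$, with $(x,y)\in V_0^2$. There is no real obstacle here; the only point to be careful about is that we are restricting the first and third coordinates of $V^3$ to $V_0$ while leaving the middle coordinate free, so this restriction must be packaged as Zariski-density of $V_0\times V\times V_0$ in the irreducible variety $V^3$ rather than as a finite-index restriction inside an automorphism group as in Lemma~\ref{lem_indice_fini}. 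In particular, the only property of $V_0$ used in this lemma is that it has finite index in $V$.
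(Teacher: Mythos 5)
Your proposal is correct and follows essentially the same route as the paper's proof: both establish the identity $h_z\circ i^*_{x,y}=h_{(x,z,y)}$, both observe that $V_0\times V\times V_0$ is a finite-index subgroup of the connected algebraic group $V^3$ and hence Zariski-dense by Lemma~\ref{lem_fidx_dense}, and both then invoke Lemma~\ref{lem_discriminating_V0} to conclude that the relevant evaluation family is discriminating. The only difference is that you spell out the irreducibility of $V^3$ and the general functoriality identity a bit more explicitly, which is harmless.
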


\begin{proof}
    We view $L_{V^3}$ and $L_V$ as groups of functions on $V^3$ and $V$ with values in $\Gamma$.
    
    Given $f \in L_{V^3}$ we have that $i^*_{ x, y} (f)$ is the map $ f( x, \cdot, y)$ so its image by $h_v$ is $f(x, v, y)$. This shows that $h_v \circ i^*_{x,y} = h_{(x,v,y)}$. 
    
    Since $V_0\times V\times V_0$ is a finite index subgroup of the \conn\ algebraic group $V^3$,
    this is a dense subset of $V^3$ (Lemma \ref{lem_fidx_dense}) and by Corollary \ref{lem_discriminating_V0},
    the family of $\Gamma$-homomorphisms 
    $$\{h_{(x,v,y)} |\, (x,v,y)\in V_0\times V\times V_0\}$$ is discriminating.
    Since $h_{(x,v,y)}$ factors through $i^*_{x,y}$ this concludes the proof.
\end{proof}

\begin{proof}[Proof of Proposition \ref{prop_M_etoile}]
    Denote $U=\grp{L^{(X)}_{V},L^{(Z)}_{W},L^{(Y)}_{V}}\subset L_{V^3}$.
    Write $L_{V^3}$ as the free product $U*\bbF_r$ where $\bbF_r$ is the free group $\bbF_r=\grp{Z_1,\dots,Z_{r}}$.
    Write $M^*(T_i)$ as a reduced word in this free product:
    $M^*(T_i)=u_0 w_1 u_1 w_2\dots u_n w_{n+1}$ with $u_j \in U, w_j \in \bbF_r$ and $u_j, w_j$ non-trivial except maybe for $u_0, w_{n+1}$.
    It suffices to prove that $w_j\in \grp{Z_1,\dots,Z_i}$ for all $j$, and
    that  $Z_i^{\pm 1}$ appears exactly once in exactly one of the $w_j$'s.
    
    By Lemma \ref{lem_collection}, there exist $x,y \in V_0^2$ such that for all $j$, the element $u'_j=i^*_{(x,y)}(u_j)\in L_V$ is non-trivial whenever $u_j$ is non-trivial. 
    As noticed above, $i_{x,y}^*(U)\subset L_W$ so $u'_j\in L_W$.
    Now 
    $$ i^*_{(x,y)} \circ M^* (T_i) = u'_0 w_1 u'_2 w_2\dots u'_n w_{n+1}\in L_W * \grp{T_1,\dots,T_r}$$
    (under the natural identification of $\grp{Z_1,\dots,Z_r}\subset L^{(Z)}_{V}$ with its copy $\grp{T_1,\dots,T_r}\subset L_V$). 
    This is a reduced word in the free product $L_V = L_W * \grp{T_1,\dots,T_r}$.
    
    On the other hand, we have 
    $\Delta_{x,y} = M \circ i_{(x,y)}$  so $\Delta^*_{x,y} =  i^*_{(x,y)} \circ M^*$
    so by Corollary \ref{cor_triangulaire}, 
    $$i^*_{(x,y)} \circ M^* (T_i) = g_{i,x,y}T_i^{\pm 1}g'_{i,x,y}$$
    where $g_{i,x,y},g'_{i,x,y} \in L_V^{i-1}=L_W*\grp{T_1,\dots,T_{i-1}}$. 
    This shows that $w_1, \ldots, w_{n+1}$ lie in $\grp{T_1, \ldots, T_i}$ (\ie the generators $T_{i+1},\dots T_r$ do not appear) and that $T_i^{\pm1}$ appears exactly once in exactly one of the $w_j$.
    
    This proves the proposition.
\end{proof}

\subsection{Structure of the multiplication law}
\label{subsec_structure_of_mult}

Recall that $\mu:V\times V\ra V$ is the multiplication law, and $\mu^*:L_V\ra L_{V^2}$
is the dual map.

We have already seen that 
the  multiplication is standard
in restriction to the abelian generators of $L_W\subset L_V$ 
(see Proposition \ref{prop_loi_abelienne}).

We now turn to the description of $\mu^*$ on the free generators $T_1, \ldots, T_r$ of $L_V$.

We want to show that up to changing the coordinate system, $\mu^*: T_i \mapsto X_iY_i$. We do this in several steps. Proposition \ref{prop_mu_etoile} shows that in the coordinate system given by the previous sections, $X_{i+1},Y_{i+1},\dots,X_r, Y_{r}$ do not appear at all in $\mu^*(T_i)$,
and $X_i$ and $Y_i$ each appear exactly once (possibly with exponent $-1$). 
In Lemma \ref{lem_positif}, we see that we can arrange that they appear with exponent $+1$ and in the correct order. Proposition \ref{prop_changement_var} gives a last change of coordinate system in which the product is standard.

Denote by $L_{W^2} \subset L_{V^2}$ (resp. $L_{W^3}\subset L_{V^3}$) the subgroup 
generated by $\Gamma$ and all abelian generators.
For $i\leq r$, we denote by $L_{V^3}^{i}=\grp{L_{W^3},X_1,Z_1,Y_1,\dots,X_i,Z_i,Y_i}$
and $L_{V^2}^{i}=\grp{L_{W^2},X_1,Y_1\dots,X_i,Y_i}$, \ie an element lies in $L_{V^3}^{i}$ (resp. $L_{V^2}^{i}$) if it does not involve the free generators with index greater than $i$.

\begin{prop} \label{prop_mu_etoile}
For all $i\leq r$,
there exists $a_i,b_i,c_i\in L_{V^2}^{i-1}$ 
such that either
    $$\mu^*(T_i)= a_i\cdot \left( X_i \right)^{\pm 1} \cdot b_i\cdot \left( Y_i \right)^{\pm 1} \cdot c_i$$
    or
    $$\mu^*(T_i)= a_i \cdot \left(Y_i \right)^{\pm 1}  \cdot b_i \cdot \left( X_i \right)^{\pm 1} \cdot c_i.$$
\end{prop}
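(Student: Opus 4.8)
The plan is to reduce Proposition~\ref{prop_mu_etoile} to Proposition~\ref{prop_M_etoile} by realizing the multiplication $\mu$ from the ternary product $M\colon V^3\ra V$ in two complementary ways. I would introduce the two embeddings $\delta_L\colon V^2\ra V^3$, $(x,y)\mapsto(e,x,y)$, and $\delta_R\colon V^2\ra V^3$, $(x,y)\mapsto(x,y,e)$. Since $e$ is neutral, $M\circ\delta_L=M\circ\delta_R=\mu$, hence $\mu^*=\delta_L^*\circ M^*=\delta_R^*\circ M^*$. First I would record the effect of the dual maps on coordinate functions: $\delta_L^*\colon L_{V^3}\ra L_{V^2}$ is the identity on the $Y$-generators, sends $Z_i\mapsto X_i$ and $Z_{j,k}\mapsto X_{j,k}$, and sends each $X$-generator into $\Gamma$ (namely $X_i\mapsto h_e(T_i)$ and $X_{j,k}\mapsto h_e(T_{j,k})$); symmetrically, $\delta_R^*$ is the identity on the $X$-generators, sends $Z_i\mapsto Y_i$ and $Z_{j,k}\mapsto Y_{j,k}$, and sends each $Y$-generator into $\Gamma$. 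Consequently $\delta_L^*$ maps $L_V^{(X)}$ into $\Gamma$, maps $\LV{i-1}{Z}$ onto $\LV{i-1}{X}$, and is the identity on $L_V^{(Y)}$; symmetrically for $\delta_R^*$.

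Next I would apply Proposition~\ref{prop_M_etoile} and write $M^*(T_i)=g_i\,Z_i^{\pm1}\,g_i'$ with $g_i,g_i'\in\grp{L_V^{(X)},\LV{i-1}{Z},L_V^{(Y)}}\subset L_{V^3}$. Pushing this forward by $\delta_L^*$ yields
$$\mu^*(T_i)=\delta_L^*(g_i)\,X_i^{\pm1}\,\delta_L^*(g_i'),\qquad\delta_L^*(g_i),\delta_L^*(g_i')\in\grp{\LV{i-1}{X},L_V^{(Y)}}=\grp{L_{W^2},X_1,\dots,X_{i-1},Y_1,\dots,Y_r},$$
and pushing it forward by $\delta_R^*$ yields
$$\mu^*(T_i)=\delta_R^*(g_i)\,Y_i^{\pm1}\,\delta_R^*(g_i'),\qquad\delta_R^*(g_i),\delta_R^*(g_i')\in\grp{L_V^{(X)},\LV{i-1}{Y}}=\grp{L_{W^2},X_1,\dots,X_r,Y_1,\dots,Y_{i-1}}.$$
(In fact the two occurrences of $\pm1$ carry the same sign, the one appearing in $M^*(T_i)$, though the statement does not require this.)

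Finally I would combine the two expressions inside the free product decomposition $L_{V^2}=L_{W^2}*\grp{X_1}*\dots*\grp{X_r}*\grp{Y_1}*\dots*\grp{Y_r}$ provided by Lemma~\ref{lem_GoG_V_times_V}. In the first expression, the outer factors $\delta_L^*(g_i),\delta_L^*(g_i')$ lie in the free factor $L_{W^2}*\grp{X_1}*\dots*\grp{X_{i-1}}*\grp{Y_1}*\dots*\grp{Y_r}$, so they involve no $\grp{X_k}$-syllable with $k\geq i$; since the middle letter $X_i^{\pm1}$ therefore cannot merge or cancel, the reduced form of $\mu^*(T_i)$ has no $\grp{X_k}$-syllable with $k>i$ and exactly one $\grp{X_i}$-syllable, of exponent $\pm1$. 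The second expression gives symmetrically that the reduced form has no $\grp{Y_k}$-syllable with $k>i$ and exactly one $\grp{Y_i}$-syllable, of exponent $\pm1$. Hence $\mu^*(T_i)\in L_{W^2}*\grp{X_1}*\dots*\grp{X_i}*\grp{Y_1}*\dots*\grp{Y_i}=L_{V^2}^{i}$, and its reduced form uses the letters $X_i^{\pm1}$ and $Y_i^{\pm1}$ exactly once each, while every other syllable comes from $L_{W^2}*\grp{X_1}*\dots*\grp{X_{i-1}}*\grp{Y_1}*\dots*\grp{Y_{i-1}}=L_{V^2}^{i-1}$. Cutting this word at the two distinguished letters, according to whether $X_i^{\pm1}$ precedes or follows $Y_i^{\pm1}$, produces the two asserted normal forms with $a_i,b_i,c_i\in L_{V^2}^{i-1}$. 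The only point that requires care — and where I expect the argument to be slightly fiddly rather than genuinely hard — is the bookkeeping of the images of $\grp{L_V^{(X)},\LV{i-1}{Z},L_V^{(Y)}}$ under $\delta_L^*$ and $\delta_R^*$ and the identification of the resulting subgroups with the stated free factors of $L_{V^2}$; no new ingredient beyond Proposition~\ref{prop_M_etoile} and Lemma~\ref{lem_GoG_V_times_V} is needed.
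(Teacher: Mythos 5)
Your proof is correct and follows essentially the same route as the paper's: you realize $\mu$ as $M\circ\delta_L=M\circ\delta_R$ (these are the paper's $i_e'$ and $i_e$), apply Proposition~\ref{prop_M_etoile}, push forward through both duals, and read off the reduced form in the free product decomposition of $L_{V^2}$ to see that $X_i^{\pm1}$ and $Y_i^{\pm1}$ each occur exactly once and that no higher-index free letters occur. The only cosmetic difference is that you count syllables in the finer decomposition $L_{W^2}*\grp{X_1}*\dots*\grp{X_r}*\grp{Y_1}*\dots*\grp{Y_r}$ rather than in the paper's $L_{V^2}^{i-1}*\grp{X_i,Y_i,\dots,X_r,Y_r}$.
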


\begin{proof}
    Since $\mu(x,y)=M(x,y,e)=M(e,x,y)$,
    one has $\mu^*=i_e^*\circ M^*=i_e'^*\circ M^*$
    where $i_e,i'_e:V^2\ra V^3$ are defined by $i_e(x,y)=(x,y,e)$ and $i_e'(x,y)=(e,x,y)$.
    The dual morphisms $i_e^*$ and $i_e'^*$ satisfy
  \begin{align*}
    i_e^*:L_{V^3}&\ra L_{V^2} &i_e'^*:L_{V^3}&\ra L_{V^2} \\
    X_i,X_{j,k} &\mapsto X_i,X_{j,k} & X_i,X_{j,k} &\mapsto h_e(X_i),h_e(X_{j,k})=1 \in\Gamma\\  
    Z_i,Z_{j,k} &\mapsto Y_i,Y_{j,k} & Z_i,Z_{j,k} &\mapsto  X_i,X_{j,k}\\  
    Y_i,Y_{j,k} &\mapsto h_e(Y_i),h_e(Y_{j,k})=1\in \Gamma  & Y_i,Y_{j,k}&\mapsto Y_i,Y_{j,k}
  \end{align*}
Fix $i\leq r$ and recall that $L_{V^2}=L_{V^2}^{i-1}*\grp{X_i,Y_i,\dots,X_r,Y_r}$.
Write $\mu^*(T_i)$ as a reduced word as follows: 
\begin{equation}\label{eq1}
    \mu^*(T_i)=g_0 S_1 g_1 \dots S_n g_n
\end{equation}
where for each $j\leq n$, $S_j$ is a letter in $\{X_i^{\pm 1},Y_i^{\pm 1},\dots,X_r^{\pm 1},Y_r^{\pm 1}\}$, 
and $g_j\in L_{V^2}^{i-1}$, and $g_j$ is nontrivial 
when $S_{j+1}=S_j\m$.

By Proposition \ref{prop_M_etoile} we have 
$$M^*(T_i)= g_i {Z_{i}}^{\pm 1}g'_i \text{ for some }g_i,g'_i\in\grp{L_{V^3}^{i-1},X_i,Y_i\dots, X_r,Y_r}.$$ 
Thus
$$\mu^*(T_i)=i^*_e \circ M^*(T_i) =i_e^*(g_i){Y_i}^{\pm 1}  i_e^*(g'_i)$$
with $i_e^*(g_i),i_e^*(g'_i)\in \grp{L_{V^2}^{i-1},X_i,\dots, X_r}$.
This shows that the letters $Y_{i+1}^{\pm 1},\dots, Y_r^{\pm 1}$ don't occur in (\ref{eq1}), 
and that $Y_i^{\pm 1}$ occurs exactly once.

By a similar argument using $i_e'^*$, we see that the letters $X_{i+1}^{\pm 1},\dots, X_r^{\pm 1}$ don't occur in (\ref{eq1}), 
and that $X_i^{\pm 1}$ occurs exactly once.
This concludes the proof.
\end{proof}

The following lemma says that we may assume that the exponents of $X_i ,Y_i$ 
in Proposition \ref{prop_mu_etoile} are positive and that $X_i$ comes before $Y_i$.
\begin{lem}\label{lem_positif}
       Up to changing $T_i$ to $T_i\m$ and $X_i,Y_i$ to $X_i\m,Y_i\m$ accordingly (see Remark \ref{rem_example}), 
       there exists $a_i,b_i,c_i\in L_{V^2}^{i-1}$ such that 
       $$\mu^*(T_i)=a_i X_i b_i Y_i  c_i.$$
\end{lem}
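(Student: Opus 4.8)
The plan is to feed the two possible shapes of $\mu^*(T_i)$ coming from Proposition~\ref{prop_mu_etoile} into the two unit laws of $V$, which will pin down the exponents of $X_i$ and $Y_i$ to $+1$, and then to use the single allowed change of coordinates $T_i\mapsto T_i\m$ to arrange that $X_i$ occurs before $Y_i$.

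Concretely, I would first introduce the two unit sections $s_1,s_2\colon V\ra V^2$ defined by $s_1(v)=(v,e)$ and $s_2(v)=(e,v)$. Since $\mu\circ s_1=\mu\circ s_2=\id_V$, the dual $\Gamma$-homomorphisms satisfy $s_1^*\circ\mu^*=s_2^*\circ\mu^*=\id_{L_V}$. On the coordinate generators, $s_2^*$ sends $X_k\mapsto h_e(T_k)\in\Gamma$ and $X_{j,k}\mapsto 1$ while fixing $Y_k\mapsto T_k$ and $Y_{j,k}\mapsto T_{j,k}$, and $s_1^*$ does the mirror image; in particular both send $L_{V^2}^{i-1}$ into $L_V^{i-1}=L_W*\grp{T_1,\dots,T_{i-1}}$. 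Now Proposition~\ref{prop_mu_etoile} writes $\mu^*(T_i)$ either as $a_iX_i^{\epsilon}b_iY_i^{\delta}c_i$ or as $a_iY_i^{\delta}b_iX_i^{\epsilon}c_i$ with $a_i,b_i,c_i\in L_{V^2}^{i-1}$ and $\epsilon,\delta\in\{\pm1\}$. Applying $s_2^*$ to this identity and collecting factors, one obtains a relation of the form $p\,T_i^{\delta}\,q=T_i$ with $p,q\in L_V^{i-1}$; recalling from Theorem~\ref{thm_triangulaire} that $L_V^i=L_V^{i-1}*\grp{T_i}$, a normal form argument in this free product shows this forces $\delta=+1$ (and in fact $p=q=1$). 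Applying $s_1^*$ symmetrically forces $\epsilon=+1$. So after this step $\mu^*(T_i)$ is either $a_iX_ib_iY_ic_i$ — which is already the desired form — or $a_iY_ib_iX_ic_i$.

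In the remaining case I would apply the change $T_i\mapsto T_i\m$: by Remark~\ref{rem_example} this replaces $X_i,Y_i$ by $X_i\m,Y_i\m$ and $\mu^*(T_i)$ by its inverse, while leaving the generators of $L_{V^2}^{i-1}$ (hence $a_i,b_i,c_i$) untouched, since none of them involve $X_i$ or $Y_i$; one then computes $\mu^*(T_i)=c_i\m X_i b_i\m Y_i a_i\m$, which is of the required form. The only step that is not pure bookkeeping is the normal form computation in $L_V^i=L_V^{i-1}*\grp{T_i}$: one must check that an equality $p\,T_i^{\delta}\,q=T_i$ with $p,q$ in the free factor $L_V^{i-1}$ is impossible when $\delta=-1$ and forces $p=q=1$ when $\delta=+1$ — this is exactly what eliminates the negative exponents left open by Proposition~\ref{prop_mu_etoile}. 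The only points needing a little care are that $s_1^*$ and $s_2^*$ genuinely land $L_{V^2}^{i-1}$ inside $L_V^{i-1}$ (so that the relation above really takes place in $L_V^i$), and that the substitution $T_i\mapsto T_i\m$ does not disturb $L_{V^2}^{i-1}$.
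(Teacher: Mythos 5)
Your proof is correct and takes essentially the same approach as the paper's: it uses the two unit identities $\mu(v,e)=\mu(e,v)=v$ (what you call $s_1,s_2$ and the paper calls $i_e,i_e'$), a normal-form argument in the free product $L_V^i=L_V^{i-1}*\grp{T_i}$ to force both exponents to be $+1$, and the change of coordinates $T_i\mapsto T_i\m$ to swap the order of $X_i$ and $Y_i$. The only difference is that the paper performs the swap first and then kills the exponents, whereas you kill the exponents first and then swap; this reordering is harmless because the $T_i\mapsto T_i\m$ substitution, together with the induced $X_i\mapsto X_i\m$, $Y_i\mapsto Y_i\m$, preserves the fact that the exponents are $+1$.
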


\begin{proof}
We apply Proposition \ref{prop_mu_etoile}.
If $\mu^*(T_i)= a_i  Y_i ^{\eps} b_i X_i^{\eps'} c_i$, 
for some $\eps,\eps'\in \{\pm 1\}$ and $a_i,b_i,c_i\in L_{V^2}^{i-1}$,
then we change $T_i$ to $T'_i=T_i\m$
and $X_i,Y_i$ to $X'_i=X_i\m, Y'_i=Y_i\m$ accordingly.
This reverses the order of appearance of $X_i$ and $Y_i$:
we get $\mu^*(T'_i)= c_i\m   X_i'{}^{\eps'} b_i\m   Y_i'^{\eps} a_i\m$.
Thus, we may assume that
$$\mu^*(T_i)= a_i X_i ^{ \eps} b_i Y_i ^{ \eps'} c_i.$$

We use the fact that $\mu(v,e)=v$ for all $v\in V$, \ie 
$\mu\circ i_e=\id_V$ where $i_e:V\ra V^2$ is defined by $i_e(v)=(v,e)$.
This translates into $i_e^*\circ \mu^*=\id_{L_V}$.
Similarly, we have $i_e'^*\circ \mu^*=\id_{L_V}$
where $i'_e(v)=(e,v)$.
Note that 
\begin{align*}
    i_e^*:L_{V^2}&\ra L_V & i_e'^*:L_{V^2}&\ra L_V \\
    X_i,X_{j,k}&\mapsto T_i,T_{j,k} & X_i,X_{j,k} &\mapsto h_e(X_i),h_e(X_{j,k})=1\in  \Gamma \\
    Y_i,Y_{j,k}&\mapsto h_e(Y_i),h_e(Y_{j,k})=1 \in \Gamma &  Y_i,Y_{j,k}&\mapsto  T_i,T_{j,k}
\end{align*}

We have that  $i_e^*\circ \mu^*(T_i) = i_e^*(a_i) \cdot T_{i} ^{\eps} \cdot i_e^*(b_i Y_i ^{ \eps'} c_i)$. 
Since this must be equal to $T_i$ and since $i_e^*(a_i), i_e^*(b_i Y_i ^{\eps'} c_i) \in L_V^{i-1}$, the exponent
$\eps$
has to be positive.
Arguing with $i_e'^*$ instead of $i_e^*$, we similarly get $\eps'=+1$ which concludes the proof.
\end{proof}

From now on, we take $a_i,b_i,c_i\in L_{V^2}^{i-1}$ as in Lemma \ref{lem_positif}.
The following proposition is the last step in the proof of the main result.
$u^{(Y)}\in L_{V}^{(Y)}\subset L_{V^2}$ the images of $u$
under the embeddings of $L_V$ into $L_{V^2}$.

\begin{prop} \label{prop_changement_var} For each $i$ there exist $u_i, v_i \in L_V^{i-1}$ such that if we define $T'_i = u_iT_i v_i$, and change accordingly $X_i,Y_i$ to $X'_i=u_i^{(X)}X_iv_i^{(X)}$ and $Y'_i=u_i^{(Y)}Y_iv_i^{(Y)}$ (see Remark \ref{rem_example}),
we have $$\mu^*(T'_i) = X_i'  Y_i' .$$
\end{prop}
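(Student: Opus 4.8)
The plan is to reduce the statement to a handful of explicit identities between word maps on powers of $V$, exploiting the unit and associativity axioms. I would argue by induction on $i$, so that one may assume $\mu^*(T_j)=X_jY_j$ for the free generators $j<i$ (and $\mu^*(T_{j,k})=X_{j,k}Y_{j,k}$ by Proposition~\ref{prop_loi_abelienne}); in particular $\mu^*$ maps $L_V^{i-1}$ into $L_{V^2}^{i-1}$. After replacing $T_i$ by $h_e(T_i)\m T_i$ — which leaves the earlier coordinates, the filtration $L_V^0\subset\cdots\subset L_V^r$ and the triangular form of Corollary~\ref{cor_triangulaire} unchanged, and only conjugates/translates $a_i,b_i,c_i$ by an element of $\Gamma\subset L_V^{i-1}$ — I may assume $h_e(T_i)=1$, so $\mu^*(T_i)=a_iX_ib_iY_ic_i$ with $a_i,b_i,c_i\in L_{V^2}^{i-1}$. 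For a candidate $T'_i=u_iT_iv_i$ with $u_i,v_i\in L_V^{i-1}$ (so $X'_i=u_i^{(X)}X_iv_i^{(X)}$ and $Y'_i=u_i^{(Y)}Y_iv_i^{(Y)}$), cancelling syllables in the free product $L_{V^2}^{i-1}*\grp{X_i,Y_i}$ shows that $\mu^*(T'_i)=X'_iY'_i$ is equivalent to the system
$$u_i^{(X)}=\mu^*(u_i)\,a_i,\qquad b_i=v_i^{(X)}\,u_i^{(Y)},\qquad v_i^{(Y)}=c_i\,\mu^*(v_i),$$
so everything reduces to producing $u_i,v_i\in L_V^{i-1}$ solving it.

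The first step is to extract constraints on $a_i,b_i,c_i$ from the group axioms. From $\mu\circ i_e=\mu\circ i'_e=\id_V$ one gets $i_e^*\circ\mu^*=i_e'^*\circ\mu^*=\id_{L_V}$; plugging $\mu^*(T_i)=a_iX_ib_iY_ic_i$ into these and comparing normal forms in $L_V=L_V^{i-1}*\grp{T_i,\dots,T_r}$ (with $i_e^*$ sending $X_i\mapsto T_i$, $Y_i\mapsto 1$, and $i_e'^*$ sending $X_i\mapsto 1$, $Y_i\mapsto T_i$) yields $i_e^*(a_i)=1$, $i_e'^*(c_i)=1$ and $a_i(e,y)\,b_i(e,y)=1$. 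From associativity, $(\mu^*_{X,Z}\otimes\id^*_Y)\circ\mu^*=(\id^*_X\otimes\mu^*_{Z,Y})\circ\mu^*=M^*$; I would expand $M^*(T_i)$ in both ways and compare normal forms in $L_{V^3}^{i-1}*\grp{X_i,Z_i,Y_i}$, each side containing exactly one $X_i$, one $Z_i$, one $Y_i$, in that order, consistently with Proposition~\ref{prop_M_etoile}. Reading the resulting syllable identities as identities of $\Gamma$-valued word maps on $V^3$ — evaluating, and using that pre-composing an evaluation map with one of these comultiplications evaluates at the corresponding product of the entries — they become, for all $x,z,y\in V$: $a_i(x\fois z,y)\,a_i(x,z)=a_i(x,z\fois y)$, $c_i(x,z)\,b_i(x\fois z,y)=b_i(z,y)$, and $c_i(x\fois z,y)=c_i(z,y)\,c_i(x,z\fois y)$; moreover $a_i(v,e)=1$ and $c_i(e,v)=1$ for all $v$.

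Given this, I would simply set $u_i:=i_e'^*(a_i)\m$ and $v_i:=i_e^*(c_i)\m$, which lie in $L_V^{i-1}$ and, as word maps, are $v\mapsto a_i(e,v)\m$ and $v\mapsto c_i(v,e)\m$, each sending $e$ to $1$. Evaluating the first cocycle identity at $x=e$ gives $a_i(z,y)=a_i(e,z\fois y)\,a_i(e,z)\m$, which is exactly $u_i^{(X)}=\mu^*(u_i)\,a_i$; the third at $y=e$ gives $c_i(x,z)=c_i(z,e)\m c_i(x\fois z,e)$, i.e.\ $v_i^{(Y)}=c_i\,\mu^*(v_i)$; the middle one at $z=e$ gives $b_i(x,y)=c_i(x,e)\m b_i(e,y)$, and since $b_i(e,y)=a_i(e,y)\m=u_i(y)$ by the unit relation, this is $b_i=v_i^{(X)}\,u_i^{(Y)}$. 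Since word maps agreeing pointwise are equal in $L_{V^2}$, the three equations hold, and hence $\mu^*(T'_i)=\mu^*(u_i)\,a_i\,X_i\,b_i\,Y_i\,c_i\,\mu^*(v_i)=u_i^{(X)}X_iv_i^{(X)}u_i^{(Y)}Y_iv_i^{(Y)}=X'_iY'_i$.

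I expect the main obstacle to be the associativity step: expanding $M^*(T_i)$ in two ways and keeping the $L_{V^3}^{i-1}$-syllables straight across the four substitution homomorphisms $\mu^*_{X,Z}\otimes\id^*_Y$, $\id^*_X\otimes\mu^*_{Z,Y}$, and the block-substitutions $Y\mapsto Z$ and $X\mapsto Z$, so as to isolate precisely the three cocycle relations used above. Once that bookkeeping is settled, the conceptual content is just that associativity forces $a_i$ and $c_i$ to be coboundaries of the $1$-cochains $v\mapsto a_i(e,v)$ and $v\mapsto c_i(v,e)$ — which is exactly what makes the change of variables work — with $b_i$ then pinned down by one further instance of associativity together with the unit axiom.
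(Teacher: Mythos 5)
Your proof is correct, and it takes a genuinely different route from the paper's, so let me compare. The paper's proof first establishes Lemma~\ref{lem_separation}, a normal-form argument in the graph of groups $\Lambda_{V^2}$ showing that $b_i$ factors as $v_i^{(X)}u_i^{(Y)}$ with $u_i,v_i\in L_V^{i-1}$; it then plugs this split into equations (\ref{eq_b}) and (\ref{eq_c}) of Lemma~\ref{lem_equations} to get $u_i^{(X)}=\mu^*(u_i)a_i$ and $v_i^{(Y)}=c_i\mu^*(v_i)$, and concludes as you do. Your argument skips Lemma~\ref{lem_separation} entirely: you read all four identities of Lemma~\ref{lem_equations} as functional (``cocycle'') equations on $V^3$, define $u_i:=i_e'^*(a_i)^{-1}$ and $v_i:=i_e^*(c_i)^{-1}$ explicitly from the unit constraints $i_e^*(a_i)=1$, $i_e'^*(c_i)=1$, $i_e'^*(a_ib_i)=1$, and then verify all three syllable equations --- including the factorization of $b_i$ --- by evaluating the cocycle equations at $x=e$, $z=e$, $y=e$ respectively. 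In effect you use (\ref{eq_a}), (\ref{eq_c}) and (\ref{eq_d}), whereas the paper uses (\ref{eq_b}), (\ref{eq_c}) plus a separate normal-form lemma. I checked the evaluations: they are consistent (the first cocycle at $x=e$ gives $a_i(z,y)=a_i(e,z\fois y)a_i(e,z)^{-1}$, which is $u_i^{(X)}=\mu^*(u_i)a_i$; the third at $y=e$ gives $v_i^{(Y)}=c_i\mu^*(v_i)$; the middle at $z=e$ combined with $b_i(e,\cdot)=a_i(e,\cdot)^{-1}$ gives $b_i=v_i^{(X)}u_i^{(Y)}$), and your preliminary normalization $h_e(T_i)=1$ --- which the paper does not perform but which only conjugates $a_i,b_i,c_i$ by a constant in $\Gamma\subset L_{V^2}^{i-1}$ --- makes these unit relations cleaner. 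What your route buys is an explicit formula for $u_i,v_i$ in terms of $a_i,c_i$ alone, and the avoidance of the somewhat delicate reduced-word argument over $\Lambda_{V^2}$ that the paper needs for Lemma~\ref{lem_separation}; what it still requires, as you note, is the careful syllable comparison giving Lemma~\ref{lem_equations} (which the paper also does), and you should make sure to observe that the containments $\theta_L^*(L_{V^2}^{i-1}),\theta_R^*(L_{V^2}^{i-1})\subset L_{V^3}^{i-1}$ needed there follow from Proposition~\ref{prop_mu_etoile} for indices $<i$ rather than from the full inductive hypothesis.
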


We will need two lemmas before proving the proposition.

Given $u\in L_{V^2}$, we denote by  $u^{(XZ)}\in \grp{L_{V}^{(X)},L_{V}^{(Z)}}\subset L_{V^3}$
the image of $u$
under the embeddings of $L_{V^2}$ into $L_{V^3}$
sending respectively $X_i,X_{j,k},Y_i,Y_{j,k}$ to $X_i,X_{j,k},Z_i,Z_{j,k}$.
Similarly we denote by $u^{(ZY)}\in \grp{L_{V^3}^{(Z)},L_{V^3}^{(Y)}}$ its image under the embedding sending respectively $X_i,X_{j,k},Y_i,Y_{j,k}$ to $Z_i,Z_{j,k},Y_i,Y_{j,k}$.

Define $\theta_L,\theta_R:V^3\ra V^2$ by $\theta_L(x,y,z)=(x\fois y,z)$
and $\theta_R(x,y,z)=(x,y\fois z)$. The dual maps satisfy
\begin{align*}
    \theta_L^*:L_{V^2}&\ra L_{V^3} & \theta_R^*:L_{V^2}&\ra L_{V^3}\\
    X_i&\mapsto a_i^{(XZ)} X_i b_i^{(XZ)} Z_i c_i^{(XZ)} & X_i &\mapsto X_i\\
    Y_i &\mapsto Y_i &   Y_i&\mapsto a_i^{(ZY)} Z_i b_i^{(ZY)} Y_i c_i^{(ZY)}.
\end{align*}
In particular, 
we have that $\theta_L^*(L_{V^2}^{i-1})\subset L_{V^3}^{i-1}$, and $\theta_R^*(L_{V^2}^{i-1})\subset L_{V^3}^{i-1}$.

Associativity says that $\mu\circ \theta_L=\mu\circ \theta_R$, \ie
$\theta_L^*\circ\mu^*=\theta_R^*\circ\mu^*$.
This is the basis of the following relations.

\begin{lem}\label{lem_equations}  With $a_i,b_i,c_i$ as in Lemma \ref{lem_positif} we have
in $L_{V^3}$
\begin{align}
    \theta_R^*(a_i) &= \theta_L^*(a_i)a^{(XZ)}_{i} \label{eq_a} \\
    b^{(XZ)}_{i} &= \theta_R^*(b_i) a^{(ZY)}_{i} \label{eq_b}\\
    b^{(ZY)}_{i} &= c^{(XZ)}_{i} \theta_L^*(b_i) \label{eq_c}\\
    \theta_L^*(c_i) &= c^{(ZY)}_{i} \theta_R^*(c_i) \label{eq_d}
\end{align}
\end{lem}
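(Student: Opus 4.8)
The plan is to compute $\theta_L^*\circ\mu^*(T_i)$ and $\theta_R^*\circ\mu^*(T_i)$ using the formula $\mu^*(T_i)=a_iX_ib_iY_ic_i$ from Lemma~\ref{lem_positif}, apply the two homomorphisms $\theta_L^*,\theta_R^*$ described just above the statement, and then equate the two resulting words in $L_{V^3}$ using associativity $\theta_L^*\circ\mu^*=\theta_R^*\circ\mu^*$. Concretely, applying $\theta_L^*$ gives
$$\theta_L^*(\mu^*(T_i)) = \theta_L^*(a_i)\cdot a_i^{(XZ)}X_ib_i^{(XZ)}Z_ic_i^{(XZ)}\cdot \theta_L^*(b_i)\cdot Y_i\cdot \theta_L^*(c_i),$$
since $\theta_L^*(X_i)=a_i^{(XZ)}X_ib_i^{(XZ)}Z_ic_i^{(XZ)}$ and $\theta_L^*(Y_i)=Y_i$; while applying $\theta_R^*$ gives
$$\theta_R^*(\mu^*(T_i)) = \theta_R^*(a_i)\cdot X_i\cdot \theta_R^*(b_i)\cdot a_i^{(ZY)}Z_ib_i^{(ZY)}Y_ic_i^{(ZY)}\cdot \theta_R^*(c_i),$$
using $\theta_R^*(X_i)=X_i$ and $\theta_R^*(Y_i)=a_i^{(ZY)}Z_ib_i^{(ZY)}Y_ic_i^{(ZY)}$.

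The key step is then to observe that both of these are expressions in the free product $L_{V^3}=\grp{L_{V^3}^{(X)},\dots}* \ldots$ where, crucially, by the triangular structure recorded above, $a_i,b_i,c_i\in L_{V^2}^{i-1}$ and hence $\theta_L^*(a_i),\theta_R^*(a_i),\dots\in L_{V^3}^{i-1}$, and $a_i^{(XZ)},b_i^{(XZ)},c_i^{(XZ)},a_i^{(ZY)},b_i^{(ZY)},c_i^{(ZY)}$ all lie in $L_{V^3}^{i-1}$ as well. Thus both words are reduced words in the free product $L_{V^3}^{i-1}*\grp{X_i,Y_i,Z_i}$ (after checking that the letters $X_i,Y_i,Z_i$ each occur exactly once, with the pieces between them lying in $L_{V^3}^{i-1}$). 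Comparing the two reduced words letter by letter — matching the segment before $X_i$, between $X_i$ and $Z_i$, between $Z_i$ and $Y_i$, and after $Y_i$ — yields exactly the four identities \eqref{eq_a}--\eqref{eq_d}: the prefix comparison before $X_i$ gives $\theta_R^*(a_i)=\theta_L^*(a_i)a_i^{(XZ)}$, the $X_i$-to-$Z_i$ block gives $b_i^{(XZ)}=\theta_R^*(b_i)a_i^{(ZY)}$, the $Z_i$-to-$Y_i$ block gives $b_i^{(ZY)}=c_i^{(XZ)}\theta_L^*(b_i)$, and the suffix after $Y_i$ gives $\theta_L^*(c_i)=c_i^{(ZY)}\theta_R^*(c_i)$.

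The one point needing a little care — and the main (minor) obstacle — is to be sure that the comparison of reduced words is legitimate, i.e. that after substitution no cancellation occurs across the distinguished letters $X_i,Y_i,Z_i$ and that these letters survive as honest syllables in the free product $L_{V^3}^{i-1}*\grp{X_i,Y_i,Z_i}$; this follows from the fact that $\mu^*(T_i)$ contains $X_i$ and $Y_i$ exactly once each (Proposition~\ref{prop_mu_etoile} / Lemma~\ref{lem_positif}) together with the inductive triangularity placing all the $a_i,b_i,c_i$ and their images in $L_{V^3}^{i-1}$, so that the only occurrences of $X_i,Z_i,Y_i$ in each of the two words are the ones displayed. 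Once that is in place, the four equations are just the uniqueness of normal form in a free product, read off position by position, and no further computation is required.
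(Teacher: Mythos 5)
Your proof is correct and matches the paper's argument essentially verbatim: you expand $\theta_L^*\circ\mu^*(T_i)$ and $\theta_R^*\circ\mu^*(T_i)$ using the given formulas, note that all the pieces $\theta_L^*(a_i), a_i^{(XZ)}$, etc.\ land in $L_{V^3}^{i-1}$, and read off the four equations by uniqueness of normal form in the free product (the paper uses $L_{V^3}^{i-1}*\grp{X_i,Z_i,Y_i,\ldots,X_r,Z_r,Y_r}$, you restrict to the subgroup $L_{V^3}^{i-1}*\grp{X_i,Z_i,Y_i}$, which is equivalent since both words already live there).
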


\begin{proof} Recall that we have $\mu^*(T_i)=a_iX_i b_i Y_i c_i\in L_{V^2}$, where $a_i,b_i,c_i\in L_{V^2}^{i-1}$ 
(\ie do not involve $X_{i},Y_i,\dots, X_r,Y_r$).
Thus 
$$\theta_L^*(\mu^*(T_i))=\theta_L^*(a_i) \cdot a_i^{(XZ)} X_i  b_i^{(XZ)} Z_i c_i^{(XZ)} \cdot \theta_L^*(b_i) \cdot Y_i \cdot \theta_L^*(c_i)$$
and
$$\theta_R^*(\mu^*(T_i))=\theta_R^*(a_i)\cdot X_i \cdot\theta_R^*(b_i)\cdot a_i^{(ZY)} Z_i b_i^{(ZY)} Y_i  c_i^{(ZY)}\cdot \theta_R^*(c_i).$$

We want to identify the two expressions using the free product structure
$$L_{V^3}^{i-1} *\grp{X_i, Z_i, Y_i, \ldots ,X_r, Z_r, Y_r}.$$
Since  $a_i,b_i,c_i\in L_{V^2}^{i-1}$, their copies in $L_{V^3}^{(XZ)}$ and $L_{V^3}^{(ZY)}$
are in $L_{V^3}^{i-1}$ \ie don't involve 
the free generators with index at least $i$.
Since $\theta_L^*(L_{V^2}^{i-1})\subset L_{V^3}^{i-1}$ and 
$\theta_R^*(L_{V^2}^{i-1})\subset L_{V^3}^{i-1}$,
the same holds for the images of $a_i,b_i,c_i$ by $\theta_L^*, \theta_R^*$. 
By identification, 
we get the desired four equations.
\end{proof}

\begin{lem}\label{lem_separation} Fix $i_0\leq r$. 
Assume that $\mu^*(T_i) = X_i Y_{i}$ for all $i < i_0$. 

With the notations of Lemma \ref{lem_positif}, we have
       $b_{i_0}\in (\LV{{i_0}-1}{X}) . (\LV{{i_0}-1}{Y})$.
\end{lem}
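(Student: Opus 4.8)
I would work with the structure equations from Lemma~\ref{lem_equations}, specialized under the inductive hypothesis that $\mu^*(T_i) = X_i Y_i$ for all $i < i_0$. The strategy is to push the four associativity relations \eqref{eq_a}--\eqref{eq_d} forward under a carefully chosen $\Gamma$-homomorphism from $L_{V^3}$ to $L_{V^2}$ (or to $L_V$) which records exactly the information I need about where $b_{i_0}$ lives, namely in the double subgroup $(\LV{i_0-1}{X})\cdot(\LV{i_0-1}{Y})$ of $L_{V^2}$. Concretely, I would consider the map $V^3 \to V^2$, $(x,z,y)\mapsto (x,y)$ --- that is, kill the middle coordinate $z$ by sending it to $e$ --- whose dual map $j^*: L_{V^2}\hookrightarrow L_{V^3}$ actually goes the other way. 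So instead, the right evaluation to use is the dual of the inclusion $i_e: V^2 \to V^3$, $(x,y)\mapsto(x,e,y)$, whose dual $i_e^*: L_{V^3}\to L_{V^2}$ sends $Z_i, Z_{j,k}$ to $h_e(Z_i)=1, h_e(Z_{j,k})=1$, sends $X$-variables to $X$-variables and $Y$-variables to $Y$-variables.

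First I would apply $i_e^*$ to equation \eqref{eq_b}, namely $b_{i_0}^{(XZ)} = \theta_R^*(b_{i_0}) a_{i_0}^{(ZY)}$. Under $i_e^*$, the left-hand side $b_{i_0}^{(XZ)}$ --- which is the copy of $b_{i_0}\in L_{V^2}^{i_0-1}$ obtained by renaming $Y$-variables to $Z$-variables --- becomes the word obtained from $b_{i_0}$ by sending its $X$-variables to themselves and its (former-$Y$, now-$Z$) variables to $1$; call this $\beta_X \in \LV{i_0-1}{X}$. On the right-hand side I need to understand $i_e^* \circ \theta_R^*$ and $i_e^*(a_{i_0}^{(ZY)})$. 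The key point is that $i_e \circ (\text{appropriate map}) = $ something explicit: the composite $V^2 \xrightarrow{i_e} V^3 \xrightarrow{\theta_R} V^2$ sends $(x,y)\mapsto(x,e,y)\mapsto(x, e\fois y)=(x,y)$, so $i_e^*\circ\theta_R^* = \id_{L_{V^2}}$, giving $i_e^*(\theta_R^*(b_{i_0})) = b_{i_0}$. Meanwhile $i_e^*(a_{i_0}^{(ZY)})$ is the copy of $a_{i_0}$ with $X$-variables (now $Z$) killed and $Y$-variables kept, i.e.\ an element $\alpha_Y\in\LV{i_0-1}{Y}$. Thus \eqref{eq_b} under $i_e^*$ reads $\beta_X = b_{i_0}\cdot\alpha_Y$, whence $b_{i_0} = \beta_X \alpha_Y^{-1}\in(\LV{i_0-1}{X})\cdot(\LV{i_0-1}{Y})$, which is exactly the claim.

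The main obstacle --- the part I expect to require genuine care rather than routine bookkeeping --- is verifying precisely that the various composites of the coordinate substitutions behave as claimed, in particular that $i_e^*\circ\theta_R^* = \id_{L_{V^2}}$ as a map $L_{V^2}\to L_{V^2}$ and that the images $i_e^*(b_{i_0}^{(XZ)})$ and $i_e^*(a_{i_0}^{(ZY)})$ land in $\LV{i_0-1}{X}$ and $\LV{i_0-1}{Y}$ respectively. This hinges on: (a) the explicit formulas for $i_e^*$, $\theta_R^*$ from the text; (b) the fact that $a_{i_0},b_{i_0},c_{i_0}\in L_{V^2}^{i_0-1}$, so they only involve generators of index $<i_0$ and the abelian generators, and all the substitutions preserve the property of lying in the appropriate index-$(i_0-1)$ copy; and (c) checking that the inductive hypothesis $\mu^*(T_i)=X_iY_i$ for $i<i_0$ is what makes $\theta_L^*$ and $\theta_R^*$ restrict to the identity-like maps on $L_{V^2}^{i_0-1}$ rather than something that mixes in $Z$-variables --- without this hypothesis, $\theta_L^*(T_i^{(X)})$ for $i<i_0$ would involve $Z_i$, and the identification would fail. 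I would state this restriction explicitly as a preliminary observation: under the inductive hypothesis, $\theta_L^*$ and $\theta_R^*$ map $L_{V^2}^{i_0-1}$ into $L_{V^3}^{i_0-1}$ in a controlled way, and then the computation above goes through cleanly. (One could alternatively deduce the lemma from \eqref{eq_c} by a symmetric argument using $i_e^*$ and the composite $\theta_L\circ i_e$; I would pick whichever of \eqref{eq_b}, \eqref{eq_c} gives the shorter verification.)
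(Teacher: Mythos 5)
Your proof is correct, and it is a genuinely different and much more streamlined argument than the one in the paper. The paper proves the lemma by writing $b_{i_0}$ as a reduced word over the graph of groups $\Lambda_{V^2}$, showing that applying $\theta_R^*$ letter by letter preserves reducedness (this is where the inductive hypothesis $\mu^*(T_i)=X_iY_i$, $i<i_0$, is invoked, via the simplified formulas $\theta_R^*(X_i)=X_i$, $\theta_R^*(Y_i)=Z_iY_i$), and then deriving a contradiction from Equation~\eqref{eq_b} if a $Y$-involving letter were to precede an $X$-involving letter. Your argument bypasses normal forms entirely: since $\theta_R\circ i_e=\id_{V^2}$ where $i_e(x,y)=(x,e,y)$, functoriality gives $i_e^*\circ\theta_R^*=\id_{L_{V^2}}$, and applying $i_e^*$ to \eqref{eq_b} reads off $b_{i_0}=\beta_X\alpha_Y^{-1}$ directly with $\beta_X=i_e^*(b_{i_0}^{(XZ)})\in\LV{i_0-1}{X}$ and $\alpha_Y=i_e^*(a_{i_0}^{(ZY)})\in\LV{i_0-1}{Y}$. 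What your approach buys is brevity and conceptual clarity (a one-line functoriality observation replaces a careful reduced-word bookkeeping argument), and in fact it shows more than was asked: your argument nowhere uses the inductive hypothesis, so the lemma holds unconditionally.

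Two small inaccuracies worth fixing in a polished writeup. First, you write $h_e(Z_i)=1$ and $h_e(Z_{j,k})=1$; the latter holds by construction, but $h_e(T_i)$ for the \emph{free} generators $T_i$ need not be trivial. This is harmless: what matters is only that $i_e^*(Z_i)=h_e(T_i)\in\Gamma$ is a constant, so that $i_e^*$ still sends $b_{i_0}^{(XZ)}$ into $\LV{i_0-1}{X}$ and $a_{i_0}^{(ZY)}$ into $\LV{i_0-1}{Y}$, but you should not assert the constants are trivial. Second, your closing paragraph worries that without the inductive hypothesis ``the identification would fail'' because $\theta_L^*(X_i)$ would involve $Z_i$. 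This worry is unfounded and somewhat confused: with or without the hypothesis, $\theta_L^*(X_i)$ and $\theta_R^*(Y_i)$ do involve $Z$-variables, yet $i_e^*\circ\theta_R^*=\id_{L_{V^2}}$ holds automatically by functoriality (equivalently, $i_e^*$ kills exactly the $Z$-variables that $\theta_R^*$ introduced, consistently with the identity $\mu(e,y)=y$). The inductive hypothesis plays no role in your proof and you can simply delete that caveat.
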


Before proving the lemma, recall that we write $L_V$ as the fundamental group of a graph of groups $\Lambda$ as in Theorem \ref{thm_structure_V}. 
Recall that the abelian vertex groups of $\Lambda$ are denoted by $A_j$, and that we write $A_j=\grp{c_j}\oplus N_j$.

Following \cite{Serre_arbres}, 
we define a reduced word over $\Lambda$ to be a word of the form $$\gamma_0 x_1 \gamma_1 \ldots \gamma_{n-1} x_n \gamma_n$$ 
with $\gamma_k\in\Gamma$ and $x_k\in \{T_1^{\pm 1}, \ldots, T_r^{\pm 1}\} \dunion \cup_j A_j$ satisfying the following conditions:
\begin{itemize}
    \item if $x_k\in A_j$, then $x_k\notin \grp{c_j}$;
    \item if $x_k,x_{k+1}\in A_j$ then $\gamma_k\notin \grp{c_j}$;
    \item if $x_k=T_i^{\eps}$ and $x_{k+1}=T_i^{-\eps}$ then $\gamma_{k} \neq 1$.
\end{itemize}

If two reduced words $\gamma_0 x_1 \gamma_1 \ldots \gamma_{n-1} x_n \gamma_n$ and $\gamma'_0 x'_1 \gamma'_1 \ldots \gamma'_{n'-1} x'_{n'} \gamma'_{n'}$ represent the same element, 
then by \cite[Exercice \S 5.2]{Serre_arbres} the following hold:
$(i)$ $n=n'$ and $(ii)$ for each $k$, if $x_k=T_i^{\eps}$, then $x'_k=x_k$, and if $x_k\in A_j$, then $x'_k=x_k c_j^{q}$ for some $q\in \bbZ$.

Analogous statements hold for reduced elements in $L_{V^2}$ and $L_{V^3}$ with
respect to the graphs of groups $\Lambda_{V^2}$, $\Lambda_{V^3}$.

\begin{proof}[Proof of Lemma \ref{lem_separation}]
Write $b_{i_0}\in L_{V^2}^{i_0-1}$ 
as a reduced word $b_{i_0}=\gamma_0 x_1 \gamma_1  \ldots x_n \gamma_{n} $ over $\Lambda_{V^2}$: 
here, the $\gamma_k$ are in $\Gamma$ and the $x_k$ are in $\{X_1^{\pm 1}, Y_1^{\pm 1}, \ldots, X_{i_0-1}^{\pm 1}, Y_{i_0-1}^{\pm 1}\} \dunion \cup_j A^{(XY)}_j$.

It suffices to show that there is no $Y$'s before the $X$'s in this word.
More formally, say that some element $u\in L_{V^2}$ does not involve the $X$'s (resp. the $Y$'s)
if $u\in L_{V}^{(Y)}$ (resp.\ $u\in L_{V}^{(X)}$) 
(we will use similar wordings in $L_{V^3}$).
We assume by contradiction that there are $k_1<k_2$ for which 
$x_{k_1}\notin L_{V}^{(X)}$ and $x_{k_2}\notin L_{V}^{(Y)}$
and we will show that this contradicts Equation \ref{eq_b} for $i=i_0$.

When $x_k$ lies in some abelian vertex group 
$A_{j_k}^{(XY)} =\grp{c_{j_k}}\oplus N_{j_k}^{(X)} \oplus N_{j_k}^{(Y)}$ (for some $j_k\leq l$),
we decompose $x_k$ accordingly as
$$x_k=c_{j_k}^{q_k} u_k u_k'$$
with $u_k\in N_{j_k}^{(X)}$, $u_{k'} \in N_{j_k}^{(Y)}$ not both trivial.
Then by Proposition \ref{prop_loi_abelienne},
\begin{equation}\label{eq_xk}
\theta_R^*(x_k)=c_{j_k}^{q_k} u_k^{(X)} u_k'^{(Z)}u_k'^{(Y)}\in A_j^{V^3}
\end{equation}
which does not lie in $\grp{c_{j_k}}$.

We first claim that the decomposition
$\theta_R^*(b_{i_0})=\gamma_0 \theta_R^*(x_1) \gamma_1  \ldots \theta_R^*(x_n) \gamma_n$
is a reduced word in $L_{V^3}$. 
It suffices to check that every subword of the form 
$w=\theta_R^*(x_k)\gamma_k \theta_R^*(x_{k+1})$ is reduced.

If $x_k$ and $x_{k+1}$ lie in distinct abelian vertex groups, then $w$ is reduced
by (\ref{eq_xk}).
If $x_k$ and $x_{k+1}$ lie in the same abelian vertex group $A^{(XY)}_{j_k}$, then
$\gamma_k\notin \grp{c_{j_k}}$
and $w$ is reduced.

If $x_k$ and $x_{k+1}$ are free letters; then $\gamma_k\neq 1$ if $x_{k+1}=x_k\m$.
Since by assumption, for all $i<i_0$
\begin{equation}\label{eq_free}
    \theta_R^*(X_i)=X_i, \qquad \theta_R^*(Y_i)=Z_{i}Y_i
\end{equation}

one easily deduces that $w$ is reduced.

Finally, if $x_k$ is a free letter and $x_{k+1}$ lies in some abelian vertex group or vice versa, then $w$ is clearly reduced, which proves our claim.

Since $x_{k_1}$ involves $Y$'s and $x_{k_2}$ involves $X$'s,
the reduced word for $\theta_R^*(b_0)$ 
has a letter involving $Y$'s before a letter involving $X$'s
by (\ref{eq_xk}) and (\ref{eq_free}).

When reducing $\theta_R^*(b_0)a_i^{(ZY)}$, the 
reduction will not cancel the letters involving the $X$'s
so the leftmost letter involving $Y$'s in $\theta_R^*(b_0)$ will not cancel.
Now (\ref{eq_b}), together with the results on reduced words representing the same element of \cite{Serre_arbres} referred to above, implies that $\theta_R^*(b_0)a_i^{(ZY)}\in L_{V^3}^{(XZ)}$, a contradiction.
\end{proof}

\begin{proof} [Proof of Proposition \ref{prop_changement_var}]
By induction, we may assume that the result holds for the indices $1, \ldots, i-1$. Up to a change of variables, we may thus assume that $\mu^*(T_1)=X_1Y_1, \ldots , \mu^*(T_{i-1})=X_{i-1} Y_{i-1}$.

By Lemma \ref{lem_separation}, 
there exist $b'_i,b''_i\in L_V^{i-1}$, such that 
$b_i=b_i'^{(X)} .b_i''^{(Y)}$
(where, as usual, notations such as $u^{(X)}$ denotes the copy of $u\in L_V$ in $L_V^{(X)}$).

Equation (\ref{eq_b}) gives
$$b_i'^{(X)} b''^{(Z)}_{i}= \theta_R^*\left(b_i'^{(X)} b_i''^{(Y)} \right) a^{(ZY)}_{i} =
b_i'^{(X)} \Big[\mu^*(b_i'' )\Big]^{(ZY)} a^{(ZY)}_{i}$$
hence $b''^{(Z)}_{i}=\Big[\mu^*(b_i'')\Big]^{(ZY)} a^{(ZY)}_{i}$.
Both sides of this equation live in the subgroup $L^{(ZY)}_{V^2}\subset L_{V^3}$
isomorphic to $L_{V^2}$,
so one can rewrite this equation in $L_{V^2}$ as
\begin{equation}\label{eq_bix}
b_i''^{(X)} =\mu^*(b''_{i}) a_{i}    
\end{equation}

Similarly, we using (\ref{eq_c}),
we get 
\begin{equation}\label{eq_biy}
b_i'^{(Y)}  = c_i \mu^*(b'_i).
\end{equation}

Define $T'_i = b''_i T_i b'_i$ in $L_V$, and $X'_i=b_i''^{(X)} X_i b_i'^{(X)}$,
$Y'_i=b_i''^{(Y)} X_i b_i'^{(Y)}$ in $L_{V^2}$. We have
\begin{align*}
\mu^*(T'_i)&= \mu^*(b''_i) \cdot a_i X_i  b_i'^{(X)}  b_i''^{(Y)}  Y_i  c_i \cdot \mu^*(b'_i) \\
&= \mu^*(b''_i) a_i [b_i''^{(X)} ]\m X_i'   Y_i'  [b_i'^{(Y)}]^{-1} c_i \mu^*(b'_i) \\
&= X_i' Y_i' 
\end{align*}
using (\ref{eq_bix}) and (\ref{eq_biy}).
\end{proof}

\bibliographystyle{alpha}
\bibliography{bibliography}

\begin{flushleft}
Vincent Guirardel\\
Univ Rennes, CNRS, IRMAR - UMR 6625, F-35000 Rennes, France.\\
\emph{e-mail:}\texttt{vincent.guirardel@univ-rennes1.fr}\\[8mm]

Chloé Perin\\
Einstein institute of mathematics, Hebrew University of Jerusalem, Israel.\\
\emph{e-mail:}\texttt{perin@math.huji.ac.il}\\[8mm]
\end{flushleft}

\end{document}